\numberwithin{equation}{section}
\newtheorem{theorem}{Theorem}[section]
\newtheorem{lemma}[theorem]{Lemma}
\newtheorem{definition}[theorem]{Definition}
\newtheorem{remark}[theorem]{Remark}
\newtheorem{corollary}[theorem]{Corollary}
\newcommand{ \mint }{ {\int\hspace{-0.38cm}-}}
\begin{document}
	
\title[\hfil Weak Harnack inequalities for nonlocal double phase problems]{On the weak Harnack inequalities for nonlocal double phase problems}

\author[Y. Fang and C. Zhang  \hfil \hfilneg]{Yuzhou Fang and Chao Zhang$^*$}

\thanks{$^*$Corresponding author.}

\address{Yuzhou Fang \hfill\break School of Mathematics, Harbin Institute of Technology, Harbin 150001, China}
\email{18b912036@hit.edu.cn}

\address{Chao Zhang  \hfill\break School of Mathematics and Institute for Advanced Study in Mathematics, Harbin Institute of Technology, Harbin 150001, China}
\email{czhangmath@hit.edu.cn}

\subjclass[2020]{35B45; 35B65; 35J60; 35R11; 47G20}
\keywords{Weak Harnack inequalities; nonlocal double phase problems; energy estimates}

\maketitle

\begin{abstract}
This paper is devoted to studying the weak Harnack inequalities for nonlocal double phase functionals by using expansion of positivity, whose prototype is
$$
\iint_{\mathbb{R}^n\times\mathbb{R}^n} \left(\frac{|u(x)-u(y)|^p}{|x-y|^{n+sp}}+a(x,y)\frac{|u(x)-u(y)|^q}{|x-y|^{n+tq}}\right) \,dxdy
$$
with $a\ge0$ and $0<s\le t<1<p\le q$. The core of our approach is to establish several measure theoretical estimates based on the nonlocal Caccioppoli-type inequality, where the challenges consist in controlling subtle interaction between the pointwise behaviour of modulating coefficient and the growth exponents. Meanwhile, a quantitative boundedness result on the minimizer of such functionals is also discussed.
\end{abstract}

\section{Introduction}
\label{sec1}

In this paper, we are interested in the weak Harnack inequalities for the minimizer of nonlocal double phase functionals, switching drastically between two diverse phases according to whether the coefficient $a$ is zero or not, of the following form
\begin{align}
\label{main}
\mathcal{F}(u;\Omega):=\iint_{\mathcal{C}_{\Omega}} \big[|u(x)-u(y)|^pK_{sp}(x,y)+a(x,y)|u(x)-u(y)|^qK_{tq}(x,y)\big]\,dxdy,
\end{align}
where $\mathcal{C}_{\Omega}=(\mathbb{R}^n\times\mathbb{R}^n)\setminus((\mathbb{R}^n\setminus\Omega)\times(\mathbb{R}^n\setminus\Omega))$,
\begin{equation}
\label{sp}
0<s\le t<1,   \quad    1<p\le q<\infty,
\end{equation}
and $\Omega$ is an open bounded subset of $\mathbb{R}^n$ ($n\ge2$). Throughout this manuscript, the kernel functions $K_{sp},K_{tq}:\mathbb{R}^n\times\mathbb{R}^n\rightarrow[0,\infty)$ are supposed to satisfy
\begin{equation}
\label{k}
\frac{\Lambda^{-1}}{|x-y|^{sp}}\le K_{sp}(x,y)\le\frac{\Lambda}{|x-y|^{sp}}, \quad  \frac{\Lambda^{-1}}{|x-y|^{tq}}\le K_{tq}(x,y)\le\frac{\Lambda}{|x-y|^{tq}}
\end{equation}
with a number $\Lambda\ge1$, and the basic assumptions on the modulating coefficient $a:\mathbb{R}^n\times\mathbb{R}^n\rightarrow\mathbb{R}$ are measurable, symmetric and nonnegative:
\begin{equation}
\label{an}
0\le a(x,y)=a(y,x)  \quad \text{in } \mathbb{R}^n\times\mathbb{R}^n.
\end{equation}
Additionally, when we investigate weak Harnack estimates, we also require
\begin{equation}
\label{a}
|a(x_1,y_1)-a(x_2,y_2)|\le [a]_\alpha(|x_1-x_2|+|y_1-y_2|)^\alpha  
\end{equation}
with $\alpha\in(0,1]$ in $\mathbb{R}^n\times\mathbb{R}^n$.

The regularity theory on nonlocal problems has become an active realm of research in the past two decades. For the standard fractional Laplace equation, Caffarelli and Silvestre \cite{CS07} showed Harnack inequality by the extension arguments; see \cite{CCV11} for H\"{o}lder continuity of solutions to the parabolic version. Afterward, Di Castro, Kuusi and Palatucci \cite{DKP14, DKP16} extended the De Giorgi-Nash-Moser technology to the nonlinear fractional $p$-Laplacian case, and obtained the local behaviors including H\"{o}lder, Harnack and weak Harnack estimates, where the key tool used is a logarithmic type inequality. One can refer to \cite{KW, KMS15, KLL23, KKL19, KKP17, Coz, Liao, Pra24} for more topics, such as measure data problems, nonlocal Wiener criterion, equivalence of various solutions, non-homogeneous functionals with lower order terms, parabolic equations and so on.

In recent years, some regularity results mentioned above have been developed for the nonlocal functionals or equations exhibiting non-standard growth. For instance, Byun, Kim and Ok \cite{BKO} proved the local boundedness and H\"{o}lder continuity for the nonlocal $G$-Laplace equations with Orlicz growth by exploiting the methods introduced by \cite{DKP16}; see also \cite{CKW22} for different approaches. On the other hand, for such an equation, Fang and Zhang \cite{FZ23} employed the De Giorgi iteration theory to establish the Harnack type inequality based on the weak Harnack estimate and boundedness of subsolutions. Subsequently, this result was improved by \cite{CKW23, BKS23}, in which they also analysed the robustness of the full Harnack estimate as $s\rightarrow1$. In addition, regarding the mixed local and nonlocal functionals with nonuniform growth, De Filippis and Mingione \cite{DeFM24} studied the $C^{1, \alpha}_{\rm loc}$ property through the perturbation argument, because the local $p$-Laplace operator refines the regularity directly.

For what concerns our main object in this work, the nonlocal double phase problem as a typical model with nonstandard growth has attracted increasing attention. De Filippis and Palatucci \cite{DP19} first considered this kind of equation and explored the viscosity theory, and then a nonlocal self-improving attribute for bounded weak solutions was deduced in \cite{SM22}. From the perspective of the concept of different solutions, Fang and Zhang \cite{FZ23} discussed the inner relationship between viscosity and weak solutions, and showed the H\"{o}lder continuity for weak solutions. Here we remark that the case under the central assumption $s\ge t$, which means that the $q$-growth term is a lower order term, was completed by the three papers above. On the contrary, Byun, Ok and Song \cite{BOS22} dealt with the complementary case $s\le t$, and, under the situation $0\le a\in L^\infty(\mathbb{R}^n\times\mathbb{R}^n)$, for minimizers/weak solutions gained the local H\"{o}lder regularity by considering carefully the interplay among growth powers, differentiability orders and coefficient $a$ with $tq\le sp+\alpha$. Such results on the nonlocal functional \eqref{main} can be compared sharply with the local analogues. When it comes to the classical double phase functional
\begin{equation}
\label{F}
\int_\Omega|Du|^p+a(x)|Du|^q\,dx,
\end{equation}
it is verified in \cite{BCM15,CM15a} that the bounded minimizers are of class $C^{0, \beta}_{\rm loc}(\Omega)$, provided the gap between growth exponents $p$ and $q$ is small enough, i.e., $q\le p+\alpha$ with $\alpha$ being the H\"{o}lder continuity index of $a$. In particular, De Filippis and Mingione \cite{DeFM23} proved the local gradient H\"{o}lder regularity for minimizers of nonuniformly elliptic integrals, that are not necessarily equipped with a Euler-Lagrange equation, which solved a longstanding and classical open problem in the regularity theory of variational integrals and elliptic equations. More investigations into local and nonlocal problems with nonstandard growth can be found in for example \cite{CM15b, BLSa, Gia, DeFM24, BM20, Mar89, HO22, BHK21} and references therein.

The main goal of this article is to establish the weak Harnack inequalities for \eqref{main}. To the best of our knowledge, the issues on Harnack type estimate or even on weak Harnack type inequality 
remain unsolved for the nonlocal double phase functionals/equations, although extensive research on such problems has been carried out up to now. To justify our results, we make use of the expansion of positivity method to derive some measure theoretical information just from the Caccioppoli estimate (Lemma \ref{lem3-2}). Then weak inequalities follow from basic computations and without an application of Moser iteration that was however utilized to get such inequalities for the fractional $p$-Laplace equations in \cite[Theorem 1.2]{DKP14}. Furthermore, these processes circumvent the complicated log-estimate paralleling \cite[Lemma 5.1]{BOS22}, and the functional \eqref{main} does not come into play at all except showing the energy inequality. Finally, let us comment that the approach developed for the nonlocal $p$-Laplacian scenario could not plainly work here because of the nonuniform ellipticity of \eqref{main}. To overcome this difficulty, when inferring various measure estimates, we have to control the delicate interaction between the possibly degenerate part of the energy $a(x,y)|u(x)-u(y)|^qK_{tq}$ and the non-degenerate one $|u(x)-u(y)|^pK_{sp}$, via applying a precise comparison scheme to distinguish two phases.

Before stating our main contributions, we introduce some terminologies as follows. For $k\in\mathbb{R}$, denote the functions
\begin{equation}
\label{hr}
h_r(k):=\frac{k^{p-1}}{r^{sp}}+a^-_r\frac{k^{q-1}}{r^{tq}}
\end{equation}
and
\begin{equation}
\label{Hr}
H_r(k)=\frac{k^p}{r^{sp}}+a^-_r\frac{k^q}{r^{tq}}
\end{equation}
with
\begin{equation}
\label{ar}
a^-_r:=\inf_{B_r(x_0)\times B_r(x_0)} a(x,y).
\end{equation}
We use the symbols $H^{-1}_r$ and $h^{-1}_r$ to represent the inverses of $H_r$ and $h_r$, respectively. Owing to the occurrence of variable coefficient $a$, we have to consider the nonlocal tail with weight given as
$$
\mathrm{Tail}_a(u_-;x_0,r):=\sup_{y\in B_{r}(x_0)}\int_{\mathbb{R}^n\setminus B_{r}(x_0)}a(x,y)\frac{u_-^{q-1}(x)}{|x-x_0|^{n+tq}}\,dx.
$$
If the function $a(\cdot,\cdot)\equiv1$ such tail becomes the usual tail, defined by
$$
\mathrm{Tail}(u_-;x_0,r):=\int_{\mathbb{R}^n\setminus B_{r}(x_0)}\frac{u_-^{p-1}(x)}{|x-x_0|^{n+sp}}\,dx.
$$

We are now in a position to state the first result on the weak Harnack inequalities.

\begin{theorem}
\label{Thm1}
Let the conditions that \eqref{sp}--\eqref{a}, and
\begin{equation}
\label{thm1}
\alpha<tq\le sp+\alpha
\end{equation}
be satisfied. Let also $u\in\mathcal{A}(\Omega)\cap L^{p-1}_{sp}(\mathbb{R}^n)\cap L^{q-1}_{a,tq}(\Omega,\mathbb{R}^n)$ that is nonnegative in a ball $B_{4R}(x_0)\subset \Omega$ with $R\le1$ be a locally bounded minimizer of \eqref{main}. Then there are two constants $\epsilon\in(0,1)$ and $C\ge1$, both depending on $\textbf{data}(B_{4R})$, such that
\begin{equation}
\label{thm2}
\left(\mint_{B_R(x_0)}u^\epsilon\,dx\right)^\frac{1}{\epsilon}\le C\inf_{B_R(x_0)}u+Ch_{4R}^{-1}\big(\mathrm{Tail}(u_-;x_0,4R)+\mathrm{Tail}_a(u_-;x_0,4R)\big).
\end{equation}
\end{theorem}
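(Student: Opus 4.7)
The plan is to implement the expansion-of-positivity (EoP) strategy of DiBenedetto--Gianazza--Vespri, using the Caccioppoli inequality of Lemma~\ref{lem3-2} as the sole analytic input. Two measure-theoretic lemmas are required. The first is a De Giorgi-type measure-to-pointwise statement: there exists $\nu_0=\nu_0(\textbf{data}(B_{4R}))\in(0,1)$ such that if
\begin{equation*}
\bigl|\{u\le k\}\cap B_{2r}(x_0)\bigr|\le\nu_0\,|B_{2r}(x_0)|
\quad\text{and}\quad
k\ge c\,h_r^{-1}\bigl(\mathrm{Tail}(u_-;x_0,r)+\mathrm{Tail}_a(u_-;x_0,r)\bigr),
\end{equation*}
then $u\ge k/2$ on $B_r(x_0)$. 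This is obtained by iterating Lemma~\ref{lem3-2} applied to $(k_j-u)_+$ at geometric levels $k_j=(1+2^{-j})k/2$, combined with the fractional Sobolev embedding for the $p$-Gagliardo seminorm; the threshold on $k$ is chosen exactly to absorb the nonlocal tail contributions, and the nonnegative $q$-phase contributes harmlessly on the right-hand side.

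The second and harder lemma is a measure-shrinking estimate: if $|\{u\ge k\}\cap B_r(x_0)|\ge\delta|B_r(x_0)|$ and $k$ exceeds the same tail threshold, then
\begin{equation*}
\bigl|\{u\le 2^{-j}k\}\cap B_{2r}(x_0)\bigr|\le \frac{C(\textbf{data})}{\delta\,j^{1/p}}\,|B_{2r}(x_0)|
\qquad\text{for every }j\in\mathbb{N}.
\end{equation*}
This follows by testing Lemma~\ref{lem3-2} with $(u-2^{-j}k)_-$ and invoking a discrete isoperimetric (Chanillo--Wheeden-type) inequality between the upper level set $\{u\ge 2^{-j+1}k\}$ and the sublevel set $\{u\le 2^{-j}k\}$. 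Choosing $j$ large enough to drive the right-hand side below $\nu_0$ and plugging into the first lemma yields expansion of positivity: there is $\eta=\eta(\delta,\textbf{data})\in(0,1)$ so that $|\{u\ge k\}\cap B_r(x_0)|\ge\delta|B_r(x_0)|$ (with $k$ above the tail threshold) implies $u\ge\eta k$ on $B_{2r}(x_0)$.

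From EoP, a standard covering/iteration argument inside $B_{4R}(x_0)$ produces the distribution inequality
\begin{equation*}
\bigl|\{u\ge k\}\cap B_R(x_0)\bigr|\le C\left(\frac{\inf_{B_R(x_0)}u+h_{4R}^{-1}\bigl(\mathrm{Tail}(u_-;x_0,4R)+\mathrm{Tail}_a(u_-;x_0,4R)\bigr)}{k}\right)^{\epsilon_0}|B_R(x_0)|
\end{equation*}
for some $\epsilon_0=\epsilon_0(\textbf{data})>0$ and every admissible $k$. The layer-cake identity $\int_{B_R(x_0)}u^{\epsilon}\,dx=\epsilon\int_0^\infty k^{\epsilon-1}|\{u\ge k\}\cap B_R(x_0)|\,dk$ applied with $\epsilon\in(0,\epsilon_0)$ then delivers \eqref{thm2}.

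The principal obstacle is the measure-shrinking step, because the double-phase Caccioppoli inequality couples the $p$-phase with the $a$-weighted $q$-phase non-homogeneously, so the iteration cannot be closed at a single growth scale. To handle this I would split the integration according to whether $a^-_r\,r^{sp-tq}k^{q-p}$ is small or large: in the degenerate regime the $p$-phase is dominant and the classical fractional $p$-Laplacian argument goes through, while in the non-degenerate regime the Hölder modulus \eqref{a} together with the matching condition $\alpha<tq\le sp+\alpha$ allows $a(x,y)$ to be replaced by $a^-_r$ up to a controllable error and the $q$-term to be absorbed into a $p$-type iteration. It is precisely this balance that dictates the use of $h_r^{-1}$ in the tail correction rather than a purely $p$-based inverse.
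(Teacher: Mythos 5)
Your overall architecture --- a De Giorgi growth lemma, a measure-shrinking lemma to seed it, expansion of positivity, a distribution estimate, and a layer-cake integration --- mirrors the paper's route through Lemma \ref{lem4-1}, Lemma \ref{lem4-2}, Corollaries \ref{cor4-2-1}--\ref{cor4-2-2} and Cavalieri's principle. The dichotomy you describe at the end, splitting according to whether $a^-_r\,r^{sp-tq}k^{q-p}$ is small or large, is exactly the comparison scheme the paper uses in \eqref{4-1-7} to separate the $p$-dominated and $q$-dominated regimes. Your first lemma is essentially Corollary \ref{cor4-2-1}.

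Where you genuinely diverge --- and where your plan would run into the difficulties the paper advertises avoiding --- is the measure-shrinking step. You propose to get the bound $|\{u\le 2^{-j}k\}\cap B_{2r}|\lesssim j^{-1/p}|B_{2r}|$ from the fractional seminorm in the Caccioppoli inequality together with a ``discrete isoperimetric (Chanillo--Wheeden-type)'' inequality. That is precisely the log-estimate/isoperimetric machinery of \cite{DKP14,BOS22} that the introduction explicitly says this paper circumvents. Instead, Lemma \ref{lem4-1} exploits the \emph{nonlocal cross term} sitting on the left-hand side of the Caccioppoli inequality of Lemma \ref{lem3-2},
\begin{equation*}
\int_{B_{2R}} w_-(x)\left(\int_{\mathbb{R}^n}\frac{w_+^{p-1}(y)}{|x-y|^{n+sp}}+a(x,y)\frac{w_+^{q-1}(y)}{|x-y|^{n+tq}}\,dy\right)dx,
\end{equation*}
with $w_\pm=(u-4\delta\tau)_\pm$. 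Restricting the inner integral to $y\in B_{2R}\cap\{u\ge\tau\}$ and the outer one to $x\in B_{2R}\cap\{u\le2\delta\tau\}$, and comparing against the right-hand side which is of order $H_{4R}(\delta\tau)|B_{4R}|$, gives in a single step the much stronger decay $|\{u\le 2\delta\tau\}\cap B_{2R}|\le C\nu^{-1}\delta^{p-1}|B_{2R}|$ (and $\delta^{q-1}$ in the $q$-dominated case), with no isoperimetric or logarithmic ingredient at all. This cross term is intrinsically nonlocal --- it has no analogue in the local double-phase theory --- and its direct use is the central technical simplification of the paper; moreover it sidesteps the issue you gesture at without resolving, namely how a fractional isoperimetric lemma would be compatible with the mixed $p/q$ growth and the coefficient $a(x,y)$ on the Gagliardo term. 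Note also that with $\delta=2^{-j}$ the paper's estimate is geometric in $j$, strictly stronger than your $j^{-1/p}$.

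One further point worth making explicit: your ``standard covering/iteration argument'' corresponds in the paper to the Krylov--Safonov-type covering lemma behind Corollary \ref{cor4-2-2}, which propagates the measure threshold at powers $\nu^k$ while keeping the EoP constant $\delta$ independent of $k$. This independence is exactly what makes $\epsilon$ in \eqref{thm2} depend only on $\textbf{data}(B_{4R})$, so it should not be left implicit.
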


\begin{remark}
Correspondingly, for the functional \eqref{F}, Baroni, Colombo and Mingione \cite{BCM15} concluded
$$
\left(\mint_{B_R(x_0)}u^{t_-}\,dx\right)^\frac{1}{t_-}\le C\inf_{B_R(x_0)}u, \quad \textmd{for some }  t_->0,   
$$
under the hypotheses that $0\le a(\cdot)\in C^{0,\alpha}(\Omega)$ and $q\le p+\alpha$. If the minimizer of \eqref{main} is globally nonnegative, then \eqref{thm2} is reduced to the classical version. It is worth pointing out that the extra condition $\alpha<tq$ in \eqref{thm1} is employed to guarantee the improper integral $\int_{\mathbb{R}^n\setminus B_{2R}(x_0)}|x-x_0|^{-(n+tq-\alpha)}\,dx$ converges. In contrast to the local double phase situation, we know the fact that $\alpha<q$ is always true ($1<q$), which can be regarded as the limiting case that $t$ goes to 1.
\end{remark}

The upcoming theorem refines the weak Harnack inequality above from two aspects. One is that the integral exponent can be explicitly exhibited; the other is that the integral power has been improved to $p-1$ or $q-1$.

\begin{theorem}
\label{Thm2}
Assume that $u\in\mathcal{A}(\Omega)\cap L^{p-1}_{sp}(\mathbb{R}^n)\cap L^{q-1}_{a,tq}(\Omega,\mathbb{R}^n)$, nonnegative in a ball $B_{4R}(x_0)\subset \Omega$ with $R\le1$, is a locally bounded minimizer of \eqref{main}. Then one can find a constant $\gamma\in(0,1)$ depending upon $\textbf{data}(B_{4R})$ such that
\begin{align*}
&\quad\inf_{B_R(x_0)}u+h_{4R}^{-1}\big(\mathrm{Tail}(u_-;x_0,4R)+\mathrm{Tail}_a(u_-;x_0,4R)\big)\\
&\ge\gamma\min\left\{(u^{p-1})_{B_{2R}}^\frac{1}{p-1},
(u^{q-1})_{B_{2R}}^\frac{1}{q-1}\right\}
\end{align*}
with the integral average
$
(v)_{B_{2R}}:=\mint_{B_{2R}(x_0)}v\,dx,
$
provided the hypotheses \eqref{sp}--\eqref{a} and \eqref{thm1} hold true.
\end{theorem}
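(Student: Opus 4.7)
The plan is to derive Theorem \ref{Thm2} from Theorem \ref{Thm1} by interpolating the small-exponent $L^\epsilon$ control against the quantitative local boundedness estimate for minimizers of \eqref{main} announced in the abstract. Set
$$
M := \inf_{B_R(x_0)} u + h_{4R}^{-1}\bigl(\mathrm{Tail}(u_-;x_0,4R)+\mathrm{Tail}_a(u_-;x_0,4R)\bigr),
$$
so that Theorem \ref{Thm1} reads $(\mint_{B_R(x_0)} u^\epsilon\,dx)^{1/\epsilon}\le CM$. The target is to upper-bound
$\min\{(\mint_{B_{2R}} u^{p-1})^{1/(p-1)},\,(\mint_{B_{2R}} u^{q-1})^{1/(q-1)}\}$
by $CM/\gamma$.

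First I would invoke the quantitative local sup-bound for minimizers of \eqref{main} on the nested pair $B_{2R}\Subset B_{4R}\subset\Omega$, which gives, for some structural exponent $\sigma\ge 1$, an estimate of the form
$$
\sup_{B_{3R/2}(x_0)} u\;\le\;C\Bigl(\mint_{B_{2R}(x_0)} u^\sigma\,dx\Bigr)^{1/\sigma}+C\,h_{4R}^{-1}(\mathrm{tails}).
$$
A standard Giaquinta--Giusti interpolation along a telescoping sequence of concentric balls---exploiting the identity $\mint u^\sigma\le(\sup u)^{\sigma-\epsilon}\mint u^\epsilon$---reduces the exponent from $\sigma$ down to the $\epsilon$ provided by Theorem \ref{Thm1}. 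Combining this with Theorem \ref{Thm1} then yields $\sup_{B_{3R/2}}u\le CM$.

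Next I would pass from $B_{3R/2}$ to $B_{2R}$ by splitting, for each $\mu\in\{p-1,q-1\}$,
$$
\mint_{B_{2R}} u^\mu\,dx=\mint_{B_{2R}\cap\{u\le CM\}} u^\mu\,dx+\mint_{B_{2R}\cap\{u>CM\}} u^\mu\,dx.
$$
The first term is bounded by $(CM)^\mu$ trivially. For the second, the set $\{u>CM\}\cap B_{2R}$ is concentrated in the thin annulus $B_{2R}\setminus B_{3R/2}$ by the previous step, and its contribution is absorbed using the nonlocal Caccioppoli-type inequality (the engine behind Theorem \ref{Thm1}) applied on a ball nested between $B_{3R/2}$ and $B_{2R}$. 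Because the Caccioppoli estimate weights the two phases asymmetrically according to whether $(CM)^p/R^{sp}$ or $a_{4R}^-(CM)^q/R^{tq}$ dominates in $H_R(CM)$, only the dominant phase produces an absorbable error term: the corresponding exponent $\mu$ survives with bound $(CM)^\mu$, while the non-dominant one may be much larger. This is precisely why the conclusion takes the form of a $\min$ rather than a bound on both averages individually.

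\textbf{Main obstacle.} The principal difficulty is the tight reach between $B_R$, on which Theorem \ref{Thm1} is available, and $B_{2R}$, appearing in the target, with only a factor-two buffer inside $B_{4R}$ for the Giaquinta--Giusti iteration. Keeping all constants absorbable through this narrow buffer, while simultaneously tracking how the nonlinear inverse $h_{4R}^{-1}$ and the coefficient $a(x,y)$ apportion the tail contributions so that the right-hand side of Theorem \ref{Thm2} matches exactly that of Theorem \ref{Thm1}, is the delicate technical point. The hypothesis $\alpha<tq\le sp+\alpha$ both ensures the weighted tail integrals converge and allows the Caccioppoli inequality to balance the two phases, so that the De Giorgi-type absorption above can be carried out uniformly.
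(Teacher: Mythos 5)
Your proposal takes a genuinely different route from the paper, but it contains a gap that cannot be repaired without essentially rebuilding the paper's argument. The paper does not interpolate from Theorem~\ref{Thm1} at all: it proves Theorem~\ref{Thm2} directly by combining Lemma~\ref{lem4-3} (the measure shrinking estimate $|\{u\le\tau\}\cap B_{2R}|\le C(\tau^{p-1}/(u^{p-1})_{B_{2R}}+\tau^{q-1}/(u^{q-1})_{B_{2R}})|B_{2R}|$) with the De Giorgi-type Corollary~\ref{cor4-2-1}. One chooses $\tau=(\nu/(2C))^{1/(p-1)}\min\{(u^{p-1})_{B_{2R}}^{1/(p-1)},(u^{q-1})_{B_{2R}}^{1/(q-1)}\}$ so that Lemma~\ref{lem4-3} puts $|\{u\le\tau\}\cap B_{2R}|$ below the critical density $\nu|B_{2R}|$, and then Corollary~\ref{cor4-2-1} upgrades this to $u\ge\tau/2$ on $B_R$ (or else the tail alternative fires). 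This gives the bound in a handful of lines, without ever invoking Theorem~\ref{Thm1} or any sup-estimate.

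The central flaw in your route is the intermediate claim $\sup_{B_{3R/2}}u\le CM$ with $M:=\inf_{B_R}u+h_{4R}^{-1}(\mathrm{Tail}(u_-;\cdot)+\mathrm{Tail}_a(u_-;\cdot))$. This cannot hold. The sup-estimate of Theorem~\ref{Thm5} carries a positive nonlocal tail $T(u_+;x_0,r/2)$ involving $u_+$ on $\mathbb{R}^n\setminus B_{r/2}$. For the competitors in Theorem~\ref{Thm2}, $u$ is only assumed nonnegative \emph{in} $B_{4R}$; outside it can be arbitrarily large and positive, which drives $T(u_+;\cdot)$ to $\infty$ while leaving $\mathrm{Tail}(u_-;\cdot)$, $\mathrm{Tail}_a(u_-;\cdot)$, and $\inf_{B_R}u$ all unchanged. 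Hence $M$ sees none of this, and there is no Giaquinta--Giusti iteration that can turn Theorem~\ref{Thm1} plus Theorem~\ref{Thm5} into $\sup_{B_{3R/2}}u\le CM$. This is not a technicality of the double-phase setting: even for $a\equiv0$, $p=q$, the full Harnack inequality $\sup\le C\inf$ fails without a positive tail term on the right-hand side (Kassmann's counterexample), so the bound you want as an intermediate step is genuinely false. The secondary issue---that the integral in Theorem~\ref{Thm2} is over $B_{2R}$ whereas any sup bound would live on $B_{3R/2}$, and the annulus $B_{2R}\setminus B_{3R/2}$ has measure comparable to $|B_{2R}|$---compounds the difficulty, but the tail obstruction is already fatal. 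The right tool here is precisely Lemma~\ref{lem4-3}, which converts the integral average into a measure density condition at the scale $2R$ without ever needing a pointwise upper bound on $u$.
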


The third weak Harnack estimate discloses the positivity contribution stemming from the long-range attribute of the minimizers to \eqref{main}, in addition to local positivity, which needs one precondition more constricted than preceding theorems that the coefficient $a(\cdot, \cdot)$ has a positive lower bound. Nevertheless, this coefficient still \textbf{may be unbounded from above} now.

\begin{theorem}
\label{Thm3}
Let the assumptions that \eqref{sp}--\eqref{a}, \eqref{thm1} and $a(\cdot,\cdot)\ge\lambda>0$ be in force. Suppose that $u\in\mathcal{A}(\Omega)\cap L^{p-1}_{sp}(\mathbb{R}^n)\cap L^{q-1}_{a,tq}(\Omega,\mathbb{R}^n)$ is a locally bounded minimizer of \eqref{main}, which is nonnegative in a ball $B_{4R}(x_0)\subset \Omega$ with $R\le1$. Then there exists a constant $\gamma\in(0,1)$, depending on $\textbf{data}(B_{4R})$ and $\lambda$, such that
\begin{align*}
&\quad\inf_{B_R(x_0)}u+h_{4R}^{-1}\big(\mathrm{Tail}(u_-;x_0,4R)+\mathrm{Tail}_a(u_-;x_0,4R)\big)\\
&\ge\gamma\min\left\{\mathrm{Tail}_p(u_+;x_0,2R),
\left(a^-_{4R}\right)^\frac{-1}{q-1}\mathrm{Tail}_q(u_+;x_0,2R)\right\},
\end{align*}
where
$$
\mathrm{Tail}_p(u_+;x_0,r)=\left(r^{sp}\int_{\mathbb{R}^n\setminus B_{r}(x_0)}\frac{u_+^{p-1}(x)}{|x-x_0|^{n+sp}}\,dx\right)^\frac{1}{p-1}
$$
and
$$
\mathrm{Tail}_q(u_+;x_0,r)=\left(r^{tq}\int_{\mathbb{R}^n\setminus B_{r}(x_0)}\frac{u_+^{q-1}(x)}{|x-x_0|^{n+tq}}\,dx\right)^\frac{1}{q-1}.
$$
\end{theorem}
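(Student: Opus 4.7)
The plan is a contradiction argument based on a refined nonlocal Caccioppoli estimate that \emph{retains} the positive far-field of $u$ on the left-hand side, instead of absorbing it into a tail as in the proofs of Theorems~\ref{Thm1} and \ref{Thm2}. Write
\[
M:=\min\bigl\{\mathrm{Tail}_p(u_+;x_0,2R),\;(a^-_{4R})^{-1/(q-1)}\mathrm{Tail}_q(u_+;x_0,2R)\bigr\},
\]
\[
T_-:=h^{-1}_{4R}\bigl(\mathrm{Tail}(u_-;x_0,4R)+\mathrm{Tail}_a(u_-;x_0,4R)\bigr),
\]
and suppose, for contradiction, that $\inf_{B_R(x_0)}u+T_-<\gamma M$, where $\gamma\in(0,1)$, depending on $\textbf{data}(B_{4R})$ and $\lambda$, is to be chosen at the end. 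Invoking Theorem~\ref{Thm1} on $B_R$ yields $(\mint_{B_R}u^{\epsilon})^{1/\epsilon}\leq C\gamma M$, and a Chebyshev inequality then gives $|\{u\geq\gamma^{1/2}M\}\cap B_R|\leq\tfrac18|B_R|$ as soon as $\gamma$ is small enough; setting $h:=2\gamma^{1/2}M$ we obtain $\int_{B_R}(h-u)_+\,dx\geq c\,h\,|B_R|$.

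The decisive step is to test the Euler--Lagrange equation for the minimizer with $\phi=\eta^p(h-u)_+$, where $\eta\in C_c^\infty(B_{3R/2})$ equals $1$ on $B_R$. Re-examining the derivation of Lemma~\ref{lem3-2} reveals that for $x\in B_R$ and $y\in\mathbb{R}^n\setminus B_{2R}$ satisfying $u(y)\geq h$ the bilinear integrand carries the favourable sign, yielding a pointwise lower bound proportional to $(u(y)-h)_+^{p-1}(h-u(x))_+\eta(x)^p/|x-y|^{n+sp}$ and its $q$-analogue weighted by $a(x,y)$. Moving these cross-contributions to the left-hand side, invoking the H\"older bound \eqref{a} together with $tq\leq sp+\alpha$ and $a\geq\lambda>0$ to replace $a(x,y)$ by $a^-_{4R}$ (the residual error being absorbed into $T_-$), and using the definition of $M$ to estimate the exterior integrals from below, one arrives at
\[
\Bigl(\frac{M^{p-1}}{R^{sp}}+a^-_{4R}\frac{M^{q-1}}{R^{tq}}\Bigr)\int_{B_R}(h-u)_+\,dx\;\leq\;C\bigl(H_R(h)+H_R(T_-)\bigr)|B_R|.
\]
Since $T_-<h$ implies $H_R(T_-)\leq H_R(h)$, and $h=2\gamma^{1/2}M$ combined with $a^-_R\leq C(\lambda)a^-_{4R}$ (which follows from the H\"older regularity of $a$ and the lower bound $a\geq\lambda$) gives $H_R(h)\leq C(\lambda)\gamma^{p/2}\bigl(M^p/R^{sp}+a^-_{4R}M^q/R^{tq}\bigr)$, plugging in the measure estimate from the previous paragraph reduces the displayed inequality to $c\gamma^{1/2}\leq C(\lambda)\gamma^{p/2}$, equivalently $1\leq C(\lambda)\gamma^{(p-1)/2}$, which is impossible once $\gamma$ is sufficiently small because $p>1$.

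The main technical obstacle is precisely the refined Caccioppoli step: one must simultaneously harvest the favourable sign from both phases while the modulating coefficient $a(x,y)$ may blow up as $y\to\infty$. This is exactly where the assumption $a\geq\lambda>0$ enters: it keeps the $q$-phase long-range contribution from degenerating when $a(x,y)$ is replaced by $a^-_{4R}$. The H\"older condition \eqref{a} together with the double-phase gap $\alpha<tq\leq sp+\alpha$ is in turn what makes this replacement quantitative, with a residual tail error that can be absorbed into $T_-$. A secondary bookkeeping point is that the tail at radius $4R$ that appears in $T_-$ and $h^{-1}_{4R}$ must be compared with the integrals at radius $2R$ produced by the sign analysis, but this is routine since the two scales are comparable.
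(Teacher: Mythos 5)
Your proof takes a genuinely different route from the paper. The paper proceeds forward: it derives a measure shrinking estimate (Lemma~\ref{lem4-4}) directly from the retained cross-term in the Caccioppoli inequality, split into the two phases of \eqref{4-3-4}, and then concludes via the De Giorgi type growth lemma (Corollary~\ref{cor4-2-1}). You instead argue by contradiction, invoking Theorem~\ref{Thm1} together with a Chebyshev inequality to obtain measure control, and then feed that into the cross-term of the Caccioppoli estimate. The essential technical insight is shared: the lower bound $a\ge\lambda$ allows one to peel off $\int_{\mathbb{R}^n\setminus B_{2R}}u_+^{q-1}|y-x_0|^{-(n+tq)}\,dy$ from the $q$-phase far-field, and the H\"older regularity \eqref{a} keeps the coefficient's deviation under quantitative control; you correctly identify this, and your single-inequality formulation (which avoids the paper's case analysis) does work, because the error terms of the form $h^qR^{\alpha-tq}\le h^qR^{-tq}\le\lambda^{-1}a^-_{4R}h^q R^{-tq}\lesssim H_R(h)$ can indeed be absorbed precisely because $a^-_{4R}\ge\lambda>0$. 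Your detour through Theorem~\ref{Thm1}, however, is logically avoidable and slightly weaker than the paper's direct measure-shrinking approach.

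There is, however, a gap you should close. You apply the Caccioppoli estimate at level $h=2\gamma^{1/2}M$, but Lemma~\ref{lem3-2} is stated (and proved) only for levels $|k|\le\|u\|_{L^\infty(B_R)}$: this restriction is what makes the $q$-$p$ power gap $w_\pm^{q-p}$ absorbable into a constant depending only on $\textbf{data}(B_{4R})$. Since $M$ is a nonlocal tail quantity, $h$ has no a priori reason to lie below the local supremum. For levels $h\gg\|u\|_{L^\infty(B_{4R})}$ the Caccioppoli constant degenerates like $h^{q-p}$, and then the final numerical step no longer produces $1\le C\gamma^{(p-1)/2}$, but rather $1\le C\gamma^{(p-1)/2}h^{q-p}/M^{p-1}\cdot(\cdots)$, which is not a contradiction when $M$ is large. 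The paper's route sidesteps this because the level $\tau$ it feeds into Lemma~\ref{lem4-4} is the one at which a small sublevel set is produced (forcing $\tau<\sup_{B_{2R}}u$). If you want to keep your contradiction scheme, you should separately dispose of the regime $h>\|u\|_{L^\infty(B_{4R})}$, or reduce to levels below the local sup before invoking the Caccioppoli estimate. A secondary point: you should make explicit that the bound $a^-_R\le C(\lambda)a^-_{4R}$ used for $H_R(h)\lesssim\gamma^{p/2}(M^p/R^{sp}+a^-_{4R}M^q/R^{tq})$ comes from $a^-_R\le a^-_{4R}+[a]_\alpha(8R)^\alpha\le a^-_{4R}(1+[a]_\alpha 8^\alpha/\lambda)$, which again leans crucially on $a\ge\lambda$.
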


Let us mention that this kind of weak Harnack inequality above, displaying strong nonlocality, is first discovered by Liao \cite{Liao24} for the nonlocal $p$-Laplace equation (the $q$-term does not occur at all in this case), which is a surprising result to some extent. Besides, in the previous three theorems, we assume {\it a priori} the minimizer of \eqref{main} is locally bounded in $\Omega$. The local boundedness of minimizers could be verified even without H\"{o}lder regularity of the coefficient $a$, which is stated as follows.

\begin{theorem}[Boundedness]
\label{Thm4}
Suppose that \eqref{sp}--\eqref{an} as well as $0\le a(x,y)\in L^\infty_{\rm loc}(\Omega\times\Omega)$ are in force. If
\begin{align}
\label{thm4}
\begin{cases}
\frac{q}{p} \leq 1+\frac{sp}{N-sp},  \quad &\text{for  } sp< n,\\[2mm]
p\le q<\infty,        \quad&\text{for  } sp\ge n,
\end{cases}
\end{align}
then each minimizer $u \in \mathcal{A}(\Omega)\cap L_{sp}^{p-1}\left(\mathbb{R}^n\right) \cap L_{a,tq}^{q-1}\left(\Omega,\mathbb{R}^n\right)$ of the functional \eqref{main} is locally bounded in $\Omega$.
\end{theorem}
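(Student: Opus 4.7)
I would establish Theorem~\ref{Thm4} by a De Giorgi iteration on a dyadic family of concentric shrinking balls with increasing truncation levels, following the scheme used for the fractional $p$-Laplacian in \cite{DKP16} but adapted to the double phase energy. Since $a \in L^\infty_{\rm loc}(\Omega\times\Omega)$, on any $B_{2R} = B_{2R}(x_0) \Subset \Omega$ we have a finite constant $a_0 := \|a\|_{L^\infty(B_{2R}\times B_{2R})}$, which freezes the $q$-phase into a pure (unweighted) $W^{t,q}$-seminorm up to the multiplicative factor $a_0$. The only genuinely structural obstacle beyond the $p$-Laplacian case is the coexistence of two distinct Sobolev embeddings $W^{s,p}\hookrightarrow L^{p^\ast}$ and $W^{t,q}\hookrightarrow L^{q^\ast}$, and the growth gap \eqref{thm4} is exactly what reconciles them, being equivalent to $q \leq p^\ast := np/(n-sp)$ when $sp < n$ and automatic when $sp \geq n$.

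\textbf{Iteration.} Fix $B_R = B_R(x_0)$ with $B_{2R}\Subset\Omega$, and introduce
\begin{align*}
r_j := \tfrac{R}{2} + \tfrac{R}{2^{j+1}},\qquad B_j := B_{r_j}(x_0),\qquad k_j := k\bigl(1 - 2^{-j-1}\bigr),\qquad w_j := (u-k_j)_+,
\end{align*}
with $k \geq 1$ a threshold to be chosen at the end. I would apply Lemma~\ref{lem3-2} to $w_j$ on the pair $B_{j+1}\subset B_j$ with a standard cutoff, controlling the $W^{s,p}$ and the $a$-weighted $W^{t,q}$ seminorms of the truncations by $\int_{B_j}(w_j^p + a_0\,w_j^q)\,dx$ plus nonlocal tail contributions, the latter finite thanks to $u\in L^{p-1}_{sp}(\mathbb{R}^n)\cap L^{q-1}_{a,tq}(\Omega,\mathbb{R}^n)$ and absorbable by choosing $k$ large. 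The $p$-Sobolev embedding then raises the $L^p$-bound of $w_j$ to an $L^{p^\ast}$-bound, after which Chebyshev on the positivity set $\{w_{j+1}>0\}$, using $w_j \geq k\cdot 2^{-j-2}$ there, combined with a H\"older split of exponent $p^\ast/p$, yields both $\int_{B_{j+1}} w_{j+1}^p\,dx$ and $\int_{B_{j+1}} w_{j+1}^q\,dx$ bounded by the same quantity multiplied by a small-measure factor. Setting
\begin{align*}
Y_j := \frac{1}{|B_j|}\int_{B_j}\bigl(w_j^p + a_0\, w_j^q\bigr)\,dx,
\end{align*}
the whole chain collapses to a recursion
\begin{align*}
Y_{j+1} \leq \frac{C\, b^{\,j}}{k^\sigma}\, Y_j^{1+\sigma},
\end{align*}
for some $\sigma > 0$, $b > 1$, $C = C(\textbf{data})$. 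Choosing $k$ sufficiently large compared to $Y_0^{1/\sigma}$ and to the tail bound, the standard geometric-decay lemma forces $Y_j \to 0$, whence $u \leq k$ almost everywhere on $B_{R/2}$, and local boundedness follows by a covering argument.

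\textbf{Main difficulty.} The delicate point is the $q$-integral $a_0\int_{B_{j+1}} w_{j+1}^q\,dx$ inside $Y_{j+1}$. The naive route via $W^{t,q}\hookrightarrow L^{q^\ast}$ requires $q^\ast > q$, which is \emph{not} guaranteed under the present assumptions (indeed $tq$ may even exceed $n$); what saves the day is the gap condition $q/p \leq 1+ sp/(n-sp)$, i.e.\ $q\leq p^\ast$, which permits
\begin{align*}
\int_{B_{j+1}} w_{j+1}^q\,dx \leq \left(\int_{B_j} w_j^{p^\ast}\,dx\right)^{q/p^\ast} \bigl|\{w_{j+1}>0\}\cap B_{j+1}\bigr|^{1-q/p^\ast},
\end{align*}
so the $q$-phase is routed entirely through the $p$-Sobolev embedding. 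This is the one place where the double phase structure obstructs a direct transfer of the $p$-Laplacian proof; once it is bypassed by \eqref{thm4}, the remaining bookkeeping---Caccioppoli with tails, absorption into large $k$, and the iteration lemma---is routine.
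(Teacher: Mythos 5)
Your overall scheme coincides with the paper's: De Giorgi iteration on concentric shrinking balls with rising truncation levels, a Caccioppoli estimate to control the $W^{s,p}$-seminorm of the truncations, the fractional Sobolev embedding plus a H\"older split to route both the $p$- and $q$-integrals through the $L^{p^*_s}$ bound, and the geometric convergence Lemma~\ref{lem-2-3}. You also correctly identify that the gap condition \eqref{thm4} is exactly $q\le np/(n-sp)$ when $sp<n$, and that this is what lets the $q$-phase piggyback on the $p$-Sobolev embedding. The only presentational difference is that the paper packages the Sobolev-plus-H\"older step into the ready-made Lemma~\ref{lem2-2} (from \cite{BOS22}), whereas you derive it by hand; the two are equivalent.

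There is one concrete slip. You propose to apply Lemma~\ref{lem3-2} as the Caccioppoli input, but that lemma is proved under the H\"older-continuity hypothesis \eqref{a} on $a$ together with the exponent constraint \eqref{exp}; Theorem~\ref{Thm4} assumes only $a\in L^\infty_{\rm loc}(\Omega\times\Omega)$, so Lemma~\ref{lem3-2} is not available. The correct starting point is the raw Caccioppoli estimate of Lemma~\ref{lem3-1}, which does not require \eqref{a}; you then bound $a(x,y)\le a^+_r:=\sup_{B_r(x_0)\times B_r(x_0)}a$ inside the local double integrals, exactly as the paper does in \eqref{6-1}, to arrive at the quantity $\mint(w_j^p+a^+_r\,w_j^q)\,dx$ that drives the iteration. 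Once that reference is corrected, the rest of your sketch --- Chebyshev to produce a small-measure factor, a choice of $\overline{k}$ large enough both to absorb the tail $T(u_+;x_0,r/2)$ and to enforce $Y_0\le1$ (so the three competing superlinear exponents $q/p$, $1+sp/n$, $p$ collapse to a single $1+\sigma$ with $\sigma=\min\{q/p-1,\,sp/n,\,p-1\}$), and then Lemma~\ref{lem-2-3} --- lines up with the paper's proof.
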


Here we only demand the coefficient $a$ is locally bounded instead of globally bounded in \cite[Theorem 1.1]{BOS22}. After we show the minimizers of \eqref{main} have local boundedness, we can conduct further an exact supremum estimate stated below.

\begin{theorem} [Sup-estimate]
\label{Thm5}
Let $B_r(x_0)\subset\Omega$ and $u\in\mathcal{A}(\Omega)\cap L^{p-1}_{sp}(\mathbb{R}^n)\cap L^{q-1}_{a,tq}(\Omega,\mathbb{R}^n)$ be a locally bounded minimizer of \eqref{main}. Then under the hypotheses of \eqref{a} and $tq\le sp+\alpha$, we infer that
$$
\sup_{B_{\frac{r}{2}}(x_0)}u\le C_\delta H^{-1}_r\left(\mint_{B_r(x_0)}H_r(u_+)\,dx\right)+\delta h^{-1}_r\left(T\left(u_+;x_0,\frac{r}{2}\right)\right)
$$
for any $\delta\in(0,1]$, where the positive constant $C_\delta$ depends on $\textbf{data}(B_r)$ and $\delta$, and the term $T\Big(u_+;x_0,\frac{r}{2}\Big)$ is denoted by
\begin{equation}
\label{6-1-2}
T\Big(u_+;x_0,\frac{r}{2}\Big):=\int_{\mathbb{R}^n\setminus B_{\frac{r}{2}}(x_0)}\frac{u^{p-1}_+(x)}{|x-x_0|^{n+sp}}\,dx+\sup_{y\in B_r(x_0)}
\int_{\mathbb{R}^n\setminus B_{\frac{r}{2}}(x_0)}a(x,y)\frac{u^{q-1}_+(x)}{|x-x_0|^{n+tq}}\,dx.
\end{equation}
\end{theorem}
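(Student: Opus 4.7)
The plan is to run a De Giorgi iteration tailored to the nonhomogeneous $H_r$-energy. Abbreviate
$$
M:=H_r^{-1}\!\left(\mint_{B_r(x_0)}H_r(u_+)\,dx\right), \qquad
\mathcal{T}:=h_r^{-1}\!\left(T(u_+;x_0,r/2)\right),
$$
and designate the target upper bound as $K:=C_\delta M+\delta\,\mathcal{T}$, with $C_\delta$ to be fixed large. Introduce shrinking radii $r_j:=r/2+r/2^{j+1}$, balls $B_j:=B_{r_j}(x_0)$, increasing levels $k_j:=K(1-2^{-j-1})$, truncations $w_j:=(u-k_j)_+$, and the excess
$$
Y_j:=\int_{B_j}H_r(w_j)\,dx.
$$
Showing $Y_j\to 0$ will yield $u\le K$ a.e.\ on $B_{r/2}(x_0)$, which is precisely the desired sup-estimate.

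\textbf{Iterative inequality.} First I would apply the Caccioppoli-type inequality (Lemma \ref{lem3-2}) to $w_j$ with a cutoff $\varphi_j\in C^\infty_c(B_{r_j})$ identically $1$ on $B_{r_{j+1}}$ and obeying $|\nabla\varphi_j|\lesssim 2^j/r$; its local part bounds the $p$- and weighted $q$-fractional seminorms of $\varphi_j w_j$ by
$$
\frac{2^{jsp}}{r^{sp}}\int_{B_j}w_j^p\,dx+\frac{a^+_{r_j}\,2^{jtq}}{r^{tq}}\int_{B_j}w_j^q\,dx,
$$
while the long-range part supplies a tail residual proportional to $T(u_+;x_0,r/2)=h_r(\mathcal{T})$ (after reducing $(u-k_j)_+\le u_+$). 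Using the H\"older continuity estimate $a^+_{r_j}\le a^-_r+[a]_\alpha r^\alpha$ combined with the hypothesis $tq\le sp+\alpha$, the $q$-prefactor $a^+_{r_j}2^{jtq}r^{-tq}$ is absorbed into $C\,2^{j(sp+\alpha)}\bigl(a^-_r r^{-tq}+r^{-sp}\bigr)$, so the local portion is dominated by $b^j Y_j$ for some $b>1$ depending on $\textbf{data}(B_r)$. Combining this with the fractional Sobolev--Poincar\'e inequality applied separately in each phase, H\"older's inequality, and the Chebyshev bound $w_j\ge K\,2^{-j-2}$ on $\{u>k_{j+1}\}\cap B_{j+1}$, one derives for some $\sigma>0$ depending on $\textbf{data}(B_r)$ the recursion
$$
Y_{j+1}\le \frac{C\,b^j}{H_r(K)^\sigma}\,Y_j^{1+\sigma}+\mathcal{R}_j,
$$
in which $\mathcal{R}_j$ collects the tail contributions.

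\textbf{Tail absorption and conclusion.} The residual $\mathcal{R}_j$ is rewritten as a bounded multiple of $H_r(\mathcal{T})\cdot(|A_j|/|B_j|)^{1+\sigma'}|B_j|$ by splitting $(u-k_j)_+^{p-1}\le 2^{p-1}\bigl(u_+^{p-1}+k_j^{p-1}\chi_{A_j}\bigr)$ in the Caccioppoli tail and recycling the Chebyshev bound $|A_j|\lesssim Y_j/H_r(K\,2^{-j-2})$. Since by construction $\mathcal{T}\le K/\delta$ and $h_r$ is monotone, $H_r(\mathcal{T})\le\delta^{-q}H_r(K)$, so $\mathcal{R}_j$ is subsumed by the superlinear principal term after enlarging $C_\delta$ in terms of $\delta$ and $\textbf{data}(B_r)$. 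The standard geometric-iteration lemma then forces $Y_j\to 0$ as soon as $Y_0\le r^n H_r(M)=r^n H_r(K/C_\delta)$ is small enough, again by further enlarging $C_\delta$. This yields $u\le K$ a.e.\ on $B_{r/2}(x_0)$. The hardest point is the two-phase balancing: the $p$- and $q$-fractional Sobolev exponents differ and only the bridging relation $tq\le sp+\alpha$, together with the two-parameter splitting $K=C_\delta M+\delta\mathcal{T}$, allows the tail residual---which is linear rather than superlinear in $Y_j$---to be traded against the principal $H_r(K)^{-\sigma}$-gain, so that a single iteration in the nonhomogeneous quantity $H_r$ actually closes.
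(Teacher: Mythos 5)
Your proposal is essentially the paper's proof: a De Giorgi iteration on the level-set quantity $Y_j$ built from the nonhomogeneous excess $H_r(w_j)$, using the Caccioppoli estimate of Lemma \ref{lem3-2} together with the fractional Sobolev inequality to get a superlinear recursion, and the two-parameter ansatz $\overline k = C_\delta\, H_r^{-1}(\mint H_r(u_+)) + \delta\, h_r^{-1}(T)$ to absorb the tail. The one place where your sketch diverges technically is the superlinearization of the tail contribution: the paper uses the pointwise identity $H_r(k)=k\,h_r(k)$ and the level-gap monotonicity $h_r((u-k_i)_+)\ge h_r(k_{i+1}-k_i)$ on $\{u>k_{i+1}\}$ to obtain directly $\int_{B_i}w_{i+1}\,dx\le \int_{B_i}H_r(w_i)/h_r(k_{i+1}-k_i)\,dx$, so the tail enters only as a bounded multiplicative coefficient $T/h_r(\overline k\,2^{-i})$ on the already superlinear Sobolev term; your route instead invokes a Chebyshev bound on $|A_j|$ and a splitting of $(u-k_j)_+^{p-1}$, which as written conflates the control of the superlevel-set measure with the control of $\int_{B_j}w_{j+1}$. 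With H\"older this can be made to close, but the paper's identity is cleaner and avoids the extra interpolation; you should verify that your $\sigma'$-exponent genuinely appears on $Y_j$, not merely on $|A_j|$, since $\int_{B_j}w_{j+1}$ is a priori only linear in $Y_j$. Everything else, including the $H_r(\mathcal T)\le\delta^{-q}H_r(K)$ comparison and the final enlargement of $C_\delta$, matches the paper.
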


Finally, according to the statement in \cite[Section 3]{BOS22}, we know that all the theorems above are also valid for the weak solutions to the following nonlocal double phase equation
\begin{align*}
&\mathrm{P.V.}\int_{\mathbb{R}^n}|u(x)-u(y)|^{p-2}(u(x)-u(y))K_{sp}(x,y)\,dy \\
&\quad+\mathrm{P.V.}\int_{\mathbb{R}^n}a(x,y)
|u(x)-u(y)|^{q-2}(u(x)-u(y))K_{tq}(x,y)\,dy=0  \quad \text{in } \Omega,
\end{align*}
where $\mathrm{P.V.}$ stands for the Cauchy principal value.

This paper is organized as follows. In Section \ref{sec2} we introduce some basic notations, notions and useful technical lemmas and prove the nonlocal Caccioppoli-type estimates in Section \ref{sec3}. Section \ref{sec4} is dedicated to exploring some results on the expansion of positivity, and subsequently we provide the proof of weak Harnack inequalities provided in Theorems \ref{Thm1}--\ref{Thm3} in Section \ref{sec5}. At last, the local boundedness along with the sup-estimate of minimizers to \eqref{main} are established in Section \ref{sec6}.

\section{Preliminaries}
\label{sec2}

We in this part shall give some concepts and notations as well as auxiliary results. Throughout this manuscript, denote by $C$ a positive constant that may vary from line to line. Relevant dependencies on parameters are emphasized by using parentheses, i.e., $C(p,q,n)$ means $C$ depends on $p,q,n$. Let $B_r(x_0):=\{x\in\mathbb{R}^n: |x-x_0|<r\}$ represent a ball with radius $r>0$ and center $x_0$. If not important or clear from the context, we will omit the center of the ball as $B_r=B_r(x_0)$. To shorten the notations, for any subset $D\subset \Omega$ we let
\begin{equation*}
\mathbf{data}(D):=\begin{cases}
n,p,q,s,t,\Lambda,\alpha,[a]_\alpha,\|u\|_{L^\infty(D)},   \quad &\text{if } sp\le n,\\[2mm]
n,p,q,s,t,\Lambda,\alpha,[a]_\alpha,[u]_{W^{s,p}(D)},   \quad &\text{if } sp> n.
\end{cases}
\end{equation*}

For $s\in(0,1)$ and $p\ge1$, the fractional Sobolev space $W^{s,p}(\Omega)$ is defined by
$$
W^{s,p}(\Omega):=\left\{u\in L^p(\Omega)\Bigg|[u]_{W^{s,p}(\Omega)}:=\left(\int_\Omega\int_\Omega\frac{|u(x)-u(y)|^p}{|x-y|^{n+sp}}\,dxdy\right)^\frac{1}{p}<\infty\right\},
$$
equipped with the norm $\|u\|_{W^{s,p}(\Omega)}=\|u\|_{L^p(\Omega)}+[u]_{W^{s,p}(\Omega)}$. Let
$$
\varepsilon(u;\Omega):=\iint_{\mathcal{C}_\Omega}\frac{|u(x)-u(y)|^p}{|x-y|^{n+sp}}+a(x,y)\frac{|u(x)-u(y)|^q}{|x-y|^{n+tq}}\,dxdy,
$$
and define a function space related to the minimizers of \eqref{main} as below
$$
\mathcal{A}(\Omega):=\left\{u:\mathbb{R}^n\rightarrow\mathbb{R} \,\Big|\, u|_\Omega\in L^p(\Omega) \ \text{and } \varepsilon(u;\Omega)<\infty\right\}.
$$
Here it is easy to see via the previous definitions that $\mathcal{A}(\Omega)\subset W^{s,p}(\Omega)$. Moreover, due to fractional Sobolev embedding it holds that $\mathcal{A}(\Omega)\subset L^q(\Omega)$, if $p\le q\le \frac{np}{n-sp}$ for $sp<n$ or $p\le q<\infty$ for $sp\ge n$. Now we state the definition of minimizer to the functional \eqref{main}.

\begin{definition}
\label{def1}
We call a function $u\in\mathcal{A}(\Omega)$ a minimizer of \eqref{main} whenever
$$
 \mathcal{F}(u;\Omega)\le\mathcal{F}(v;\Omega)
$$
for each $v\in\mathcal{A}(\Omega)$ with $v=u$ a.e. in $\mathbb{R}^n\setminus\Omega$.
\end{definition}

It is noteworthy that the existence of minimizer of \eqref{main} has been justified in \cite[Section 3]{BOS22}. Taking the nonlocal characteristic of the functional \eqref{main} into account, we next introduce the tail space given as
$$
L^{p-1}_{sp}(\mathbb{R}^n):=\left\{u:\mathbb{R}^n\rightarrow\mathbb{R}\,\Bigg|\,\int_{\mathbb{R}^n}\frac{|u(x)|^{p-1}}{(1+|x|)^{n+sp}}\,dx<\infty\right\}.
$$
Furthermore, owing to the $q$-growth term of \eqref{main} perturbed by the modulating coefficient $a$, we have to consider a ``tail space with weight" as follows,
$$
L^{q-1}_{a,tq}(\Omega,\mathbb{R}^n):=\left\{u: \mathbb{R}^n\rightarrow\mathbb{R}\,\Bigg|\,\sup_{x\in \Omega}\int_{\mathbb{R}^n}a(x,y)\frac{|u(y)|^{p-1}}{(1+|y|)^{n+sp}}\,dy<\infty\right\}.
$$
We can readily find the various tails defined in the Introduction are finite according to the definitions of tail spaces.

In the sequel, let us collect three auxiliary conclusions involving inequality concerning fractional Sobolev functions and iteration lemmas.

\begin{lemma}[\cite{BOS22}]
\label{lem2-2}
Let $p, q, s, t$ satisfy \eqref{sp} and \eqref{thm4}. Then there exists a constant $C=C(n, p, q, s,t)>0$ such that, for each $f \in W^{s, p}(B_r)$,
\begin{align*}
\mint_{B_r}\left|\frac{f}{r^s}\right|^p+L_0 \left|\frac{f}{r^ t}\right|^q\,dx
& \leq C L_0 r^{(s-t) q}
\left(\mint_{B_r}\int_{B_r}\frac{|f(x)-f(y)|^p}{|x-y|^{n+sp}}\,dxdy\right)^{\frac{q}{p}}\\
&\quad+C \left(\frac{|\operatorname{supp} f|}{\left|B_r\right|}\right)^\frac{sp}{n}\mint_{B_r}\int_{B_r}\frac{|f(x)-f(y)|^p}{|x-y|^{n+sp}}\,dxdy \\
&\quad +C\left(\frac{|\operatorname{supp} f|}{|B_r|}\right)^{p-1} \mint_{B_r}\left|\frac{f}{r^s}\right|^p+L_0\left|\frac{f}{r^t}\right|^q\,dx
\end{align*}
with any $L_0>0$.
\end{lemma}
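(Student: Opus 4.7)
The proof will combine the fractional Sobolev--Poincar\'e inequality with two careful applications of H\"older's inequality in order to extract the support factor $\sigma:=|\operatorname{supp} f|/|B_r|$ on the right-hand side. Set $p^*:=np/(n-sp)$ when $sp<n$; in the complementary case $sp\ge n$ take $p^*$ to be any sufficiently large exponent (for instance $p^*=2q$), invoking the embedding $W^{s,p}(B_r)\hookrightarrow L^{p^*}(B_r)$. In either regime the condition \eqref{thm4} ensures $q\le p^*$. The key scale-invariant ingredient is the Sobolev--Poincar\'e inequality
\[
\left(\int_{B_r}|f-(f)_{B_r}|^{p^*}\,dx\right)^{p/p^*}\le C\int_{B_r}\int_{B_r}\frac{|f(x)-f(y)|^p}{|x-y|^{n+sp}}\,dxdy,
\]
which I will use together with the elementary splitting $|f|^{p^*}\lesssim|f-(f)_{B_r}|^{p^*}+|(f)_{B_r}|^{p^*}$.

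For the $p$-contribution $\mint_{B_r}|f/r^s|^p\,dx$, I would first rewrite $|f|^p=|f|^p\mathbf{1}_{\operatorname{supp} f}$ and apply H\"older with exponents $p^*/p$ and $n/(sp)$ to obtain $\int_{B_r}|f|^p\le|\operatorname{supp} f|^{sp/n}(\int_{B_r}|f|^{p^*})^{p/p^*}$. Bounding the $L^{p^*}$ norm via the oscillation/average splitting, inserting the Sobolev--Poincar\'e inequality on the oscillation, and estimating the mean through a further H\"older step $|(f)_{B_r}|^p\le\sigma^{p-1}\mint|f|^p$, then dividing through by $r^{sp}$ and collecting the resulting powers of $|B_r|$, produces the contribution $C\sigma^{sp/n}\mint_{B_r}\int_{B_r}\cdots$ together with the remainder $C\sigma^{p-1}\mint|f/r^s|^p$.

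For the $q$-contribution $L_0\mint_{B_r}|f/r^t|^q\,dx$, the analogous H\"older step uses $q\le p^*$ with exponents $p^*/q$ and $p^*/(p^*-q)$ to give $\int|f|^q\le|\operatorname{supp} f|^{1-q/p^*}(\int|f|^{p^*})^{q/p^*}$. Raising the Sobolev--Poincar\'e bound to the power $q/p$, splitting off the average, multiplying by $L_0$ and dividing by $r^{tq}$, produces the first term $CL_0r^{(s-t)q}(\mint_{B_r}\int_{B_r}\cdots)^{q/p}$ of the statement; the mean-value remainder is handled by $|(f)_{B_r}|^q\le\sigma^{q-1}\mint|f|^q$, exactly as in the $p$-case.

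The main subtlety will be to absorb the various powers of $\sigma$ coming out of the H\"older steps into the clean exponents $sp/n$ and $p-1$ of the statement. On the $p$-side, the mean contribution carries $\sigma^{sp/n+p-1}\le\sigma^{p-1}$ since $\sigma\le1$. On the $q$-side, one meets $\sigma^{1-q/p^*}\le1$ on the Dirichlet term and $\sigma^{q-q/p^*}$ on the mean term; the latter collapses to $\sigma^{p-1}$ via $q-q/p^*\ge q-1\ge p-1$, which uses only $p^*\ge q$ and $q\ge p$. Summing the resulting $p$- and $q$-estimates then yields the stated inequality; the case $sp\ge n$ is handled in the same manner by choosing $p^*$ large enough that $q\le p^*$, with the constant $C$ picking up a dependence on $t$ and $q$.
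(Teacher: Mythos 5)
Your strategy — fractional Sobolev--Poincar\'e combined with two H\"older applications to extract the support factor, plus the splitting $|f|^{p^*}\lesssim|f-(f)_{B_r}|^{p^*}+|(f)_{B_r}|^{p^*}$ — is the standard route and is exactly the argument that appears in \cite{BOS22}. Your treatment of the sub-critical case $sp<n$ with $p^*=np/(n-sp)$ is correct, and the algebra (including the absorption of $\sigma^{sp/n+p-1}$ into $\sigma^{p-1}$ and of $\sigma^{q-q/p^*}$ into $\sigma^{p-1}$) all checks out.

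There is however a genuine gap in the case $sp\ge n$, which you dismiss too quickly with the suggestion ``take $p^*$ large, for instance $p^*=2q$''. Two problems arise. First, your H\"older step ``with exponents $p^*/p$ and $n/(sp)$'' is only a conjugate pair when $p^*=np/(n-sp)$; for $sp\ge n$ the number $n/(sp)\le 1$ is not a valid H\"older exponent. Using the correct conjugate $p^*/(p^*-p)$ one instead obtains $\sigma^{1-p/p^*}$. Since $sp/n\ge 1>1-p/p^*$ and $\sigma\le 1$, this gives $\sigma^{1-p/p^*}\ge\sigma^{sp/n}$, i.e.\ a \emph{weaker} bound than the one claimed in the lemma, so the statement is not recovered in this regime by your method. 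Second, the Sobolev--Poincar\'e inequality you invoke,
\[
\left(\int_{B_r}|f-(f)_{B_r}|^{p^*}\,dx\right)^{p/p^*}\le C\int_{B_r}\int_{B_r}\frac{|f(x)-f(y)|^p}{|x-y|^{n+sp}}\,dxdy,
\]
is scale-invariant only when $p^*=np/(n-sp)$; for a non-critical $p^*$ it acquires the factor $r^{\,np/p^*+sp-n}$ on the right (which is positive for $sp>n$). You label the inequality ``scale-invariant'' as if it held verbatim for any $p^*$, which it does not. As it happens, if you track this $r$-power through your $q$-side computation it cancels exactly (the total $r$-exponent still comes out to $sq$), so this second issue is repairable with a more careful bookkeeping; but it has to be done, and as written the argument is silent on it. For a complete proof of the lemma as stated — including $sp\ge n$ with the exponent $sp/n$ — one needs a separate argument in that regime (or a clarification that the exponent is really $\min\{sp/n,1-p/p^*\}$ there); the H\"older-into-$L^{p^*}$ device alone cannot produce a power of $\sigma$ strictly greater than $1$.
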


\begin{lemma}[\cite{DeFM24}]
\label{iteration}
Let $h:[\rho_1,\rho_2]\rightarrow\mathbb{R}$ be a nonnegative and bounded function. Suppose
$$
h(t)\le \theta h(s)+\sum^k_{i=1}\frac{a_i}{(s-t)^{p_i}}+b
$$
is valid with $\theta\in(0,1)$ and $a_i,p_i,b\ge0$ ($i\in \mathbb{N}$) if $\rho_1\le t<s\le \rho_2$. Then there holds that
$$
h(\rho_1)\le C\sum^k_{i=1}\frac{a_i}{(\rho_2-\rho_1)^{p_i}}+Cb
$$
for some constant $C=C(\theta,p_i)>0$.
\end{lemma}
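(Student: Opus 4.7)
The plan is to run the classical geometric iteration that is standard for this type of hole-filling lemma. I would introduce a sequence of intermediate radii $\{t_n\}_{n\ge 0}$ starting at $t_0=\rho_1$ and converging monotonically to $\rho_2$, apply the hypothesis repeatedly on the consecutive pairs $(t_n,t_{n+1})$, and let the $\theta$-factor absorb the left-hand side in the limit. Boundedness of $h$ enters only through the assertion that $\theta^N h(t_N)\to 0$ as $N\to\infty$.

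Concretely, set $p^*:=\max_{1\le i\le k}p_i$ and fix $\tau\in(0,1)$ with $\tau>\theta^{1/p^*}$, so that $\theta\tau^{-p_i}<1$ for every $i=1,\dots,k$. Define
\begin{equation*}
t_0=\rho_1,\qquad t_{n+1}=t_n+(1-\tau)\tau^n(\rho_2-\rho_1),
\end{equation*}
so that $t_n\nearrow\rho_2$ and $t_{n+1}-t_n=(1-\tau)\tau^n(\rho_2-\rho_1)$. Applying the hypothesis with $t=t_n$ and $s=t_{n+1}$ yields
\begin{equation*}
h(t_n)\le \theta h(t_{n+1})+\sum_{i=1}^k\frac{a_i}{(1-\tau)^{p_i}\tau^{np_i}(\rho_2-\rho_1)^{p_i}}+b.
\end{equation*}
Iterating this estimate $N$ times gives
\begin{equation*}
h(\rho_1)\le\theta^N h(t_N)+\sum_{n=0}^{N-1}\theta^n\left[\sum_{i=1}^k\frac{a_i}{(1-\tau)^{p_i}\tau^{np_i}(\rho_2-\rho_1)^{p_i}}+b\right].
\end{equation*}

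Because $h$ is bounded on $[\rho_1,\rho_2]$ and $\theta\in(0,1)$, the leading term $\theta^N h(t_N)$ vanishes as $N\to\infty$. For the remainder, the series in $b$ sums to $b/(1-\theta)$, while the $a_i$-term becomes
\begin{equation*}
\sum_{i=1}^k\frac{a_i}{(1-\tau)^{p_i}(\rho_2-\rho_1)^{p_i}}\sum_{n=0}^{\infty}\Bigl(\frac{\theta}{\tau^{p_i}}\Bigr)^{n}=\sum_{i=1}^k\frac{a_i}{(1-\tau)^{p_i}(\rho_2-\rho_1)^{p_i}}\cdot\frac{1}{1-\theta\tau^{-p_i}},
\end{equation*}
which converges by the choice of $\tau$. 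Collecting the $\tau$- and $\theta$-dependent factors into one constant $C=C(\theta,p_1,\dots,p_k)$ gives exactly the claimed bound $h(\rho_1)\le C\sum_i a_i/(\rho_2-\rho_1)^{p_i}+Cb$.

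The one genuinely quantitative step is the calibration of $\tau$: one must pick it strictly above $\theta^{1/p^*}$ (and strictly below $1$) so that every geometric series $\sum_n(\theta/\tau^{p_i})^n$ converges. This is the only place where the boundedness hypothesis on $h$ is used in a nontrivial way, since without it the passage $\theta^N h(t_N)\to 0$ could fail. No further structural properties of the $p_i$ or $b$ are needed, and the proof is otherwise purely algebraic.
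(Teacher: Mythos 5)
Your proof is correct and is exactly the standard geometric-iteration argument; the paper itself states this lemma as a citation to [DeFM24] without proof, and your choice of radii $t_{n+1}-t_n=(1-\tau)\tau^{n}(\rho_2-\rho_1)$ with $\tau\in(\theta^{1/p^*},1)$, followed by summing the geometric series and using boundedness of $h$ to kill $\theta^{N}h(t_N)$, is precisely how the cited source proves it. The only cosmetic caveat is the degenerate case $p_i=0$ (where $\theta^{1/p^*}$ is not literally defined but the corresponding series $\sum_n\theta^n$ converges trivially), which does not affect the validity of the argument.
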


\begin{lemma}[\cite{G03}]
\label{lem-2-3}
Let $\left\{Y_{j}\right\}_{j\in \Bbb{N}}$ be a sequence of nonnegative
numbers fulfilling
\begin{align*}
Y_{j+1} \leq K b^{j} Y_{j}^{1+\delta}, \quad  j=0,1,2,
\ldots,
\end{align*}
where $\delta,K>0, b>1$ are given numbers. If
$$
Y_{0} \leq  K^{-\frac{1}{\delta}} b^{-\frac{1}{\delta^2}}
$$
then $Y_{j} \leq 1$ for large $j\in \mathbb{N}$. Moreover,
$Y_{j}$ converges to 0 as $j\rightarrow\infty$.
\end{lemma}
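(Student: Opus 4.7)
The plan is a two-parameter De Giorgi iteration on concentric balls shrinking from $B_r(x_0)$ to $B_{r/2}(x_0)$, arranged so that the long-range tail enters the final bound with a coefficient proportional to the free parameter $\delta$ rather than a universal constant. Fix auxiliary levels $k, M \ge 0$ to be chosen at the end, set $r_j := r/2 + r\,2^{-j-1}$ and $k_j := k + M(1 - 2^{-j})$, and consider truncations $w_j := (u - k_{j+1})_+$ with cutoffs $\phi_j \in C_c^\infty(B_{r_j})$ satisfying $\phi_j \equiv 1$ on $B_{r_{j+1}}$ and $|\nabla\phi_j| \lesssim 2^{j}/r$.

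First I would apply the nonlocal Caccioppoli-type inequality of Lemma \ref{lem3-2} to $w_j$ on the pair $B_{r_{j+1}} \subset B_{r_j}$. The double-phase Gagliardo seminorm of $w_j\phi_j$ is then bounded by a local volume integral of order
\[
\frac{2^{jsp}}{r^{sp}}\int_{B_{r_j}} w_j^p\,dx + \frac{2^{jtq}}{r^{tq}}\int_{B_{r_j}} a_r^-\, w_j^q\,dx,
\]
plus a nonlocal tail piece which, after invoking \eqref{a} and the condition $tq \le sp+\alpha$ to replace $a(x,y)$ by $a_r^-$ modulo a remainder of order $r^\alpha$, is dominated by
\[
\bigl[\,h_r(k_{j+1}) + T(u_+; x_0, r/2)\,\bigr]\int_{B_{r_j}} w_j\,dx.
\]
Applying Lemma \ref{lem2-2} with $L_0 = a_r^-$ to $w_j\phi_j$ then converts this seminorm control into an $L^{p^*}$-type bound, furnishing the superlinear gain that drives the iteration.

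A Chebyshev estimate for the super-level sets $\{u > k_{j+1}\} \cap B_{r_j}$ based on the gap $k_{j+1} - k_j = M\,2^{-(j+1)}$, combined with the steps above, yields a recursion of the form
\[
Y_{j+1} \le K\,b^{j}\,Y_j^{1+\nu}, \qquad Y_j := \mint_{B_{r_j}} H_r(w_j)\,dx,
\]
with constants $K = K(\textbf{data}(B_r),\delta)$, $b > 1$, $\nu > 0$. The key design is that choosing $M = \delta\,h_r^{-1}\bigl(T(u_+; x_0, r/2)\bigr)$ forces the tail to enter only through the ratio $T/h_r(M)$, which is a $\delta$-dependent constant absorbed into $K$. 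The smallness threshold $Y_0 \le K^{-1/\nu} b^{-1/\nu^2}$ of Lemma \ref{lem-2-3} then translates, after rescaling by $H_r$, into the requirement $k \ge C_\delta\, H_r^{-1}\bigl(\mint_{B_r} H_r(u_+)\,dx\bigr)$, and that lemma forces $Y_j \to 0$; equivalently, $u \le k + M$ almost everywhere on $B_{r/2}(x_0)$, which is exactly the stated inequality.

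The main obstacle is the bookkeeping in the Caccioppoli step, where the nonlocal contribution must be split into a ``flat'' piece of size $h_r(k_{j+1})$ (destined to be absorbed into the terminal level $k+M$ and balanced against the local volume integral) and an ``honest'' tail piece carrying $T(u_+; x_0, r/2)$ (destined to enter $K$ only through the $\delta$-dependent factor, not through $Y_j$). The condition $tq \le sp + \alpha$ together with \eqref{a} is indispensable for this split because it permits replacing $a(x,y)$ by $a_r^-$ on $B_r \times B_r$ up to an error of order $r^\alpha$, an error that fits neatly inside the $h_r$-rescaled Caccioppoli rather than appearing as an uncontrollable cross term between the two phases.
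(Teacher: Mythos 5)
Your proposal does not prove the statement at hand. Lemma \ref{lem-2-3} is a purely numerical iteration lemma about an abstract sequence $\{Y_j\}$ of nonnegative real numbers satisfying $Y_{j+1}\le K b^j Y_j^{1+\delta}$; it makes no reference to minimizers, balls, tails, or the functional \eqref{main}. What you have written is instead a sketch of a De Giorgi iteration for the sup-estimate of Theorem \ref{Thm5} (or the local boundedness of Theorem \ref{Thm4}) — an argument that \emph{invokes} Lemma \ref{lem-2-3} as a black box (you explicitly write ``the smallness threshold $Y_0\le K^{-1/\nu}b^{-1/\nu^2}$ of Lemma \ref{lem-2-3} \dots and that lemma forces $Y_j\to0$''). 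You are therefore assuming the very statement you were asked to prove and proving a downstream application of it instead. This is a complete mismatch of target, not a variant route.

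The actual proof (the paper cites it to Giusti's book and does not reproduce it) is a short induction: one shows that
\begin{equation*}
Y_j\le K^{-\frac{1}{\delta}}\,b^{-\frac{1}{\delta^2}}\,b^{-\frac{j}{\delta}}\qquad\text{for all }j\ge0.
\end{equation*}
The base case is exactly the hypothesis on $Y_0$. For the inductive step,
\begin{equation*}
Y_{j+1}\le K b^j Y_j^{1+\delta}\le K b^j\left(K^{-\frac{1}{\delta}}b^{-\frac{1}{\delta^2}}b^{-\frac{j}{\delta}}\right)^{1+\delta}
=K^{1-\frac{1+\delta}{\delta}}\,b^{\,j-\frac{j(1+\delta)}{\delta}-\frac{1+\delta}{\delta^2}}
=K^{-\frac{1}{\delta}}\,b^{-\frac{1}{\delta^2}}\,b^{-\frac{j+1}{\delta}},
\end{equation*}
since $1-\frac{1+\delta}{\delta}=-\frac{1}{\delta}$ and $j-\frac{j(1+\delta)}{\delta}=-\frac{j}{\delta}$. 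Because $b>1$ and $\delta>0$, the right-hand side tends to $0$ as $j\to\infty$, which gives both conclusions. None of the machinery in your proposal (Caccioppoli inequalities, the condition $tq\le sp+\alpha$, the tail $T(u_+;x_0,r/2)$) is relevant here.
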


\section{Energy estimates}
\label{sec3}

This section aims at establishing the Caccioppoli-type inequalities of the nonlocal version that contains all the information needed to conclude the weak Harnack estimates as well as boundedness. The first one is as follows.

\begin{lemma}
\label{lem3-1}
Let $u\in\mathcal{A}(\Omega)\cap L^{p-1}_{sp}(\mathbb{R}^n)\cap L^{q-1}_{a,tq}(\Omega,\mathbb{R}^n)$ be a minimizer of the functional \eqref{main}. Then for any ball $B_R:=B_R(x_0)\subset\Omega$ and $0<\rho<r\le R$, we infer that
\begin{align*}
&\quad\int_{B_\rho}\int_{B_\rho}|w_\pm(x)-w_\pm(y)|^pK_{sp}(x,y)+a(x,y)|w_\pm(x)-w_\pm(y)|^qK_{tq}(x,y)\, dxdy\\
&\quad+\int_{B_\rho}w_\pm(x)\left(\int_{\mathbb{R}^n}\frac{w_\mp^{p-1}(y)}{|x-y|^{n+sp}}+a(x,y)\frac{w_\mp^{q-1}(y)}{|x-y|^{n+tq}}\,dy\right)\,dx\\
&\le C\Bigg[\int_{B_r}\int_{B_r}\frac{|w_\pm(x)+w_\pm(y)|^p}{|x-y|^{n+sp}}\frac{|x-y|^p}{(r-\rho)^p}+a(x,y)\frac{|w_\pm(x)+w_\pm(y)|^q}{|x-y|^{n+tq}}\frac{|x-y|^q}{(r-\rho)^q}\,dxdy\\
&\qquad+\left(\frac{r}{r-\rho}\right)^{n+sp}\int_{B_r}w_\pm(y)\int_{\mathbb{R}^n\setminus B_\rho}\frac{w^{p-1}_\pm(x)}{|x-x_0|^{n+sp}}\,dxdy\\
&\qquad+\left(\frac{r}{r-\rho}\right)^{n+tq}\int_{B_r}w_\pm(y)\int_{\mathbb{R}^n\setminus B_\rho}a(x,y)\frac{w^{q-1}_\pm(x)}{|x-x_0|^{n+tq}}\,dxdy\Bigg],
\end{align*}
where $w_\pm(x):=(u-k)_\pm(x)$ with $k\in\mathbb{R}$, and $C\ge1$ depends on $n,p,q,s,t,\Lambda$.
\end{lemma}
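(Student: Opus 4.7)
I would prove this Caccioppoli estimate by testing the minimality of $u$ against a truncation competitor. Fix a cutoff $\eta \in C_c^\infty(B_{(r+\rho)/2})$ with $\eta \equiv 1$ on $B_\rho$, $0 \le \eta \le 1$ and $|\nabla \eta| \le 4/(r-\rho)$, and set $v := u \mp \eta\, w_\pm$; treating the plus case, $v$ coincides with $u$ outside $B_{(r+\rho)/2}$ and is therefore admissible in Definition \ref{def1}. From $\mathcal{F}(u;\Omega) \le \mathcal{F}(v;\Omega)$, after cancelling contributions on $(\mathbb{R}^n \setminus B_r) \times (\mathbb{R}^n \setminus B_r)$ and using the symmetry of $\mathcal{C}_\Omega$, the minimality becomes
\begin{equation*}
\iint_{B_r \times B_r} K_{sp}\bigl[|u(x){-}u(y)|^p - |v(x){-}v(y)|^p\bigr]\,dxdy + 2\iint_{B_r \times (\mathbb{R}^n \setminus B_r)} K_{sp}[\cdots]\,dxdy \le 0,
\end{equation*}
together with the parallel $q$-inequality weighted by $a(x,y)K_{tq}$.

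The core of the proof is a pointwise analysis performed case-by-case according to the four sign configurations of $(u(x)-k,u(y)-k)$. Using the Bernoulli-type convexity inequalities
\begin{equation*}
(a+b)^r - b^r \ge c_r\bigl(a^r + a\,b^{r-1}\bigr), \qquad (a+b)^r \le (1+\varepsilon)\,a^r + C_\varepsilon\, b^r, \qquad a,b\ge 0,\ r>1,
\end{equation*}
in the spirit of the Caccioppoli computations in \cite{DKP14} and \cite{BOS22}, one obtains on $B_r\times B_r$ a schematic inequality
\begin{align*}
|u(x){-}u(y)|^r - |v(x){-}v(y)|^r
&\ge c_r \max\{\eta(x),\eta(y)\}^r\bigl[\,|w_+(x){-}w_+(y)|^r + w_+(x) w_-(y)^{r-1} \\
&\qquad + w_-(x) w_+(y)^{r-1}\,\bigr] - C_r \bigl(w_+(x)+w_+(y)\bigr)^r|\eta(x)-\eta(y)|^r.
\end{align*}
The cross positivities $w_\pm(x)w_\mp(y)^{r-1}$ emerge exactly from pairs at which $u-k$ changes sign. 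On the tail region $B_r\times(\mathbb{R}^n\setminus B_r)$ one sharpens the bad part through the mean-value bound $\bigl||A|^r-|B|^r\bigr|\le c\,\max\{|A|,|B|\}^{r-1}|A-B|$ with $A=u(x)-u(y)$, $B=v(x)-v(y)$, so that the ``bad'' factor becomes linear in $\eta(y)w_+(y)$.

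Inserting these pointwise bounds into the split minimality and rearranging delivers the Caccioppoli. Because $\eta\equiv 1$ on $B_\rho$, the good bracket yields the entire left-hand side of the lemma; the inner integral can be extended from $B_r$ to all of $\mathbb{R}^n$ for the cross product because, for $x\in B_\rho$ with $u(x)>k$ and any $y$ with $u(y)<k$, one automatically has $v(y)=u(y)$ (since $w_+(y)=0$), so the convexity step still produces $c_r\,w_+(x)w_-(y)^{r-1}$. On $B_r\times B_r$ the bad contribution $|\eta(x)-\eta(y)|^r\le C|x-y|^r/(r-\rho)^r$ generates the first summand on the right-hand side, while on the tail pieces the geometric inequality
\begin{equation*}
|x-y| \ge \frac{r-\rho}{2r}|x-x_0|, \qquad x \in \mathbb{R}^n\setminus B_r,\ y\in B_{(r+\rho)/2},
\end{equation*}
converts $|x-y|^{-n-sp}$ into $(r/(r-\rho))^{n+sp}|x-x_0|^{-n-sp}$ and produces the two weighted tail summands. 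The kernel bounds \eqref{k} account for the constant $\Lambda$, and since $a\ge 0$ enters only as a non-negative weight, the $q$-integrals are treated verbatim with $p,sp$ replaced by $q,tq$; summing the two gives the claim.

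The principal technical obstacle is carrying out the case-by-case pointwise analysis uniformly, producing the $\max\{\eta(x),\eta(y)\}^r$ weight on the good terms (rather than a product $\eta(x)^r\eta(y)^r$) and switching from the schematic $B_r\times B_r$ estimate to the linearized tail estimate at the correct moment; without these refinements neither the cross-positivity $w_+(x)w_-(y)^{r-1}$ integrated over all $y\in\mathbb{R}^n$ nor the sharp tail bound linear in $w_+$ could be recovered.
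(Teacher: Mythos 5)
The paper's own proof --- following Cozzi \cite{Coz} --- introduces intermediate radii $\rho\le\sigma<\tau\le r$, works with a cutoff $\varphi$ supported in $B_{(\sigma+\tau)/2}$, and after the pointwise convexity manipulations arrives at a hole-filling inequality $\Phi(\sigma)\le C\bigl(\Phi(\tau)-\Phi(\sigma)\bigr)+\text{(right-hand terms)}$ for a suitable localized energy $\Phi$; this is then resolved via the iteration Lemma~\ref{iteration}. You instead take one cutoff $\eta$ supported directly in $B_{(r+\rho)/2}$ and rely on a sharper pointwise inequality whose good part carries the weight $\max\{\eta(x),\eta(y)\}^r$, so that the annular contribution can simply be discarded and no iteration is needed. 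This is a genuinely different, and in principle viable, one-shot route; but the max-weighted pointwise inequality is the crux of the whole argument and you only assert it ``in the spirit of'' \cite{DKP14,BOS22} without verifying it. That inequality is not entirely routine with a first-power cutoff: naive convexity gives a product or minimum of the cutoff values, not the maximum, and the several sign configurations of $u(x)-k$, $u(y)-k$ in the annulus have to be examined individually to upgrade it (as you yourself acknowledge in the final paragraph). The paper's iteration route buys uniformity across the two growth exponents $p$ and $q$ --- no need to decide which power to raise the cutoff to, and the pointwise bounds are the cruder, easier ones --- at the cost of an extra absorption step; your route removes the absorption but transfers all the difficulty into the pointwise lemma, which would need to be written out in full before the proof could be considered complete.
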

\begin{proof}
We just provide the sketch of proof, since that is analogous to \cite[Proposition 7.5]{Coz} or \cite[Lemma 3.1]{DFZ}. Here we show this estimate for $w_+$. And the case for $w_-$ is similar. Let $\rho\le\sigma<\tau\le r$, and a cut-off function $\varphi\in C^\infty_0\Big(B_{\frac{\sigma+\tau}{2}}\Big)$ fulfill $0\le\varphi\le1$, $\varphi\equiv1$ in $B_\sigma$ and $|D\varphi|\le \frac{4}{\tau-\sigma}$. Because $u$ is a minimizer, by taking a test function $v:=u-\varphi w_+$, we have
\begin{align*}
0&\le\iint_{\mathcal{C}_{\Omega}} \big(|v(x)-v(y)|^p-|u(x)-u(y)|^p\big)K_{sp}(x,y)\\
&\qquad+a(x,y)\big(|v(x)-v(y)|^q-|u(x)-u(y)|^q\big)K_{tq}(x,y)\,dxdy\\
&=\int_{B_\tau}\int_{B_\tau}\big(|v(x)-v(y)|^p-|u(x)-u(y)|^p\big)K_{sp}(x,y)\,dxdy\\
&\quad+2\int_{B_\tau}\int_{\mathbb{R}^n\setminus B_\tau}\big(|v(x)-v(y)|^p-|u(x)-u(y)|^p\big)K_{sp}(x,y)\,dxdy\\
&\quad+\int_{B_\tau}\int_{B_\tau}a(x,y)\big(|v(x)-v(y)|^q-|u(x)-u(y)|^q\big)K_{tq}(x,y)\,dxdy\\
&\quad+2\int_{B_\tau}\int_{\mathbb{R}^n\setminus B_\tau}a(x,y)\big(|v(x)-v(y)|^q-|u(x)-u(y)|^q\big)K_{tq}(x,y)\,dxdy\\
&=:I_1+I_2+I_3+I_4.
\end{align*}
For $I_3+I_4$, although there is a variable coefficient $a(x,y)$, we could also follow the calculations in \cite[pages 4819--4821]{Coz} and then get
\begin{align*}
I_3+I_4& \leq-\frac{1}{C} \int_{B_{\sigma}} \int_{B_{\sigma}}a(x,y)|w_{+}(x)-w_{+}(y)|^qK_{tq}(x,y)\,dxdy\\
&\quad-\frac{1}{C} \int_{B_\sigma} w_{+}(x)\left(\int_{\Bbb{R}^n} a(x,y)\frac{w^{q-1}_{-}(y)}{|x-y|^{n+tq}}\,d y\right)\,d x   \\
&\quad+C\iint_{(B_{\tau}\times B_\tau)\backslash (B_\sigma\times B_\sigma)}a(x,y)\left|w_{+}(x)-w_{+}(y)\right|^qK_{tq}(x,y)\, dxdy   \\
&\quad+\frac{C}{(\tau-\sigma)^q}\int_{B_\tau}\int_{B_\tau}a(x,y)\frac{|w_+(x)+w_+(y)|^q}{|x-y|^{n+(t-1)q}}\,dxdy   \\
&\quad+\frac{C r^{n+tq}}{(\tau-\sigma)^{n+tq}}
 \int_{B_{r}} \int_{\mathbb{R}^n \backslash  B_\rho}a(x,y)\frac{w_{+}^{q-1}(x) w_{+}(y)}{|x-x_0|^{n+t q}} \,d x d y
\end{align*}
with some positive constant $C$ depending on $n,p,q,s,t,\Lambda$. For the estimate on $I_1+I_2$, it is similar to $I_3+I_4$ and we only substitute $a(x,y), q$ and $t$ with 1, $p$ and $s$ respectively.

Combining these estimates above yields that
\begin{align*}
&\quad\int_{B_\sigma} \int_{B_\sigma}|w_+(x)-w_+(y)|^pK_{sp}(x,y)+a(x,y)|w_+(x)-w_+(y)|^qK_{tq}(x,y)\,dxdy\\
&\quad+\int_{B_\sigma} w_+(x)\left(\int_{\Bbb{R}^n} \frac{w^{p-1}_-(y)}{|x-y|^{n+sp}}+a(x,y)\frac{w^{q-1}_-(y)}{|x-y|^{n+tq}}\,d y\right)\,dx \\ &\leq
C\iint_{(B_{\tau}\times B_\tau)\backslash (B_\sigma\times B_\sigma)}\big[|w_+(x)-w_+(y)|^pK_{sp}(x,y)\\ &\qquad\qquad\qquad\qquad\qquad+a(x,y)|w_+(x)-w_+(y)|^qK_{tq}(x,y)\big]\,dxdy   \\
&\quad+\frac{C}{(\tau-\sigma)^p}\int_{B_\tau}\int_{B_\tau}\frac{|w_+(x)+w_+(y)|^p}{|x-y|^{n+(s-1)p}}\,dxdy\\
&\quad+\frac{C}{(\tau-\sigma)^q}\int_{B_\tau}\int_{B_\tau}a(x,y)\frac{|w_+(x)+w_+(y)|^q}{|x-y|^{n+(t-1)q}}\,dxdy   \\
&\quad+\frac{C r^{n+sp}}{(\tau-\sigma)^{n+sp}}
 \int_{B_r} \int_{\mathbb{R}^n \backslash  B_\rho}\frac{w_+^{p-1}(x) w_+(y)}{|x-x_0|^{n+sp}} \,d x d y\\
&\quad+\frac{C r^{n+tq}}{(\tau-\sigma)^{n+tq}}
 \int_{B_r} \int_{\mathbb{R}^n \backslash  B_\rho}a(x,y)\frac{w_{+}^{q-1}(x) w_{+}(y)}{|x-x_0|^{n+t q}} \,dxdy.
\end{align*}
Now set
\begin{align*}
\Phi(\iota)= & \int_{B_\iota} \int_{B_\iota}|w_+(x)-w_+(y)|^pK_{sp}(x,y)+a(x,y)|w_+(x)-w_+(y)|^qK_{tq}(x,y)\,dxdy\\
&+\int_{B_\iota} w_+(x)\left(\int_{\Bbb{R}^n }\frac{w^{p-1}_-(y)}{|x-y|^{n+sp}}+a(x,y)\frac{w^{q-1}_-(y)}{|x-y|^{n+tq}}\,dy\right)\,dx,  \quad \iota>0.
\end{align*}
Then we obtain
\begin{align*}
\Phi(\sigma) & \leq C\big(\Phi(\tau)-\Phi(\sigma)\big)+\frac{C}{(\tau-\sigma)^p}\int_{B_r}\int_{B_r}\frac{|w_+(x)+w_+(y)|^p}{|x-y|^{n+(s-1)p}}\,dxdy\\
&\quad+\frac{C}{(\tau-\sigma)^q}\int_{B_r}\int_{B_r}a(x,y)\frac{|w_+(x)+w_+(y)|^q}{|x-y|^{n+(t-1)q}}\,dxdy\\
&\quad+ \frac{Cr^{n+sp}}{(\tau-\sigma)^{n+sp}}
 \int_{B_r} \int_{\mathbb{R}^n \backslash  B_\rho}\frac{w_+^{p-1}(x) w_+(y)}{\left|x-x_0\right|^{n+sp}} \,dxdy\\
& \quad+ \frac{Cr^{n+tq}}{(\tau-\sigma)^{n+tq}}
 \int_{B_r} \int_{\mathbb{R}^n \backslash  B_\rho}a(x,y)\frac{w_+^{q-1}(x) w_+(y)}{\left|x-x_0\right|^{n+tq}} \,dxdy
\end{align*}
Applying the technical lemma, Lemma \ref{iteration}, deduces the desired result:
\begin{align*}
\Phi(\rho) & \leq C\Bigg[\frac{1}{(r-\rho)^p}\int_{B_r}\int_{B_r}\frac{|w_+(x)+w_+(y)|^p}{|x-y|^{n+(s-1)p}}\,dxdy\\
&\quad+\frac{1}{(r-\rho)^q}\int_{B_r}\int_{B_r}a(x,y)\frac{|w_+(x)+w_+(y)|^q}{|x-y|^{n+(t-1)q}}\,dxdy\\
&\quad+ \frac{r^{n+sp}}{(r-\rho)^{n+sp}}
 \int_{B_r} \int_{\mathbb{R}^n \backslash  B_\rho}\frac{w_+^{p-1}(x) w_+(y)}{|x-x_0|^{n+sp}} \,dxdy\\
& \quad+ \frac{r^{n+tq}}{(r-\rho)^{n+tq}}
 \int_{B_r} \int_{\mathbb{R}^n \backslash  B_\rho}a(x,y)\frac{w_+^{q-1}(x) w_+(y)}{|x-x_0|^{n+tq}} \,dxdy \Bigg].
\end{align*}
\end{proof}

In what follows, we further derive an extremely significant energy estimate, improving the Caccioppoli-type inequality in Lemma \ref{lem3-1}, which will be utilized over and over again.

\begin{lemma}
\label{lem3-2}
Let $B_R:=B_R(x_0)\subset\Omega$ with $R\le1$, and $u\in\mathcal{A}(\Omega)\cap L^{p-1}_{sp}(\mathbb{R}^n)\cap L^{q-1}_{a,tq}(\Omega,\mathbb{R}^n)$ be a minimizer of the functional \eqref{main}. If $sp\le n$, we suppose $u$ is bounded in $B_R$. Then for $w_\pm(x):=(u-k)_\pm(x)$ with $|k|\le \|u\|_{L^\infty(B_R)}$, we have
\begin{align*}
&\quad\int_{B_\rho}\int_{B_\rho}\frac{|w_\pm(x)-w_\pm(y)|^p}{|x-y|^{n+sp}}+a^-_R\frac{|w_\pm(x)-w_\pm(y)|^q}{|x-y|^{n+tq}}\, dxdy\\
&\quad+\int_{B_\rho}w_\pm(x)\left(\int_{\mathbb{R}^n}\frac{w_\mp^{p-1}(y)}{|x-y|^{n+sp}}+a(x,y)\frac{w_\mp^{q-1}(y)}{|x-y|^{n+tq}}\,dy\right)\,dx\\
&\le C\left(\frac{r}{r-\rho}\right)^{n+q}\Bigg[\int_{B_r}\frac{w_\pm(x)^p}{r^{sp}}+a^-_R\frac{w_\pm^q}{r^{tq}}\,dxdy\\
&\quad+\int_{B_r}w_\pm(y)\left(\int_{\mathbb{R}^n\setminus B_\rho}\frac{w^{p-1}_\pm(x)}{|x-x_0|^{n+sp}}+a(x,y)\frac{w^{q-1}_\pm(x)}{|x-x_0|^{n+tq}}\,dx\right)\,dy\Bigg]
\end{align*}
for $\frac{R}{2}\le \rho<r\le R$, under the following conditions that the coefficient $a$ satisfies \eqref{a} and the growth exponents fulfill
\begin{equation}
\label{exp}
\begin{cases}
tq\le sp+\alpha,  \quad &\text{for } sp\le n,\\[2mm]
tq\le n+\alpha+\left(s-\frac{n}{p}\right)q,  \quad &\text{for } sp>n.
\end{cases}
\end{equation}
Here the constant $C\ge1$ depends on $\textbf{data}(B_R)$
and the definition of $a^-_R$ is given as \eqref{ar}.
\end{lemma}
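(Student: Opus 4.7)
The plan is to build on Lemma \ref{lem3-1}, whose left-hand side already contains, modulo the kernel bounds \eqref{k}, a term with $a(x,y)$ that dominates the corresponding term with $a^-_R$ pointwise on $B_R\times B_R$; this immediately yields the desired left-hand side of Lemma \ref{lem3-2}. The work is then to control each of the four terms on the right-hand side of Lemma \ref{lem3-1} by the two terms appearing on the right-hand side of Lemma \ref{lem3-2}. The two tail integrals cause no trouble, since $(r/(r-\rho))^{n+sp}$ and $(r/(r-\rho))^{n+tq}$ are both dominated by $(r/(r-\rho))^{n+q}$ (using $sp\le p\le q$ and $t<1$), and they then match the tail term of Lemma \ref{lem3-2} directly. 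The local $p$-integral is also routine: after $|w_\pm(x)+w_\pm(y)|^p\le 2^{p-1}(w_\pm(x)^p+w_\pm(y)^p)$ and the convergent integral $\int_{B_r}|x-y|^{p(1-s)-n}\,dy\le Cr^{p(1-s)}$, one obtains a bound of order $C(r/(r-\rho))^p r^{-sp}\int_{B_r}w_\pm^p\,dx$, which fits comfortably into the first piece on the right-hand side of Lemma \ref{lem3-2}.

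The crux is the local $q$-integral. My idea is to split
$$a(x,y)=a^-_R+\bigl[a(x,y)-a^-_R\bigr].$$
The $a^-_R$ piece, after the same routine kernel estimate, produces exactly $C(r/(r-\rho))^q a^-_R r^{-tq}\int_{B_r}w_\pm^q\,dx$, of the required form. For the oscillation piece the Hölder hypothesis \eqref{a} gives $0\le a(x,y)-a^-_R\le C[a]_\alpha R^\alpha$ on $B_R\times B_R$; boundedness of $w_\pm$ (from the $L^\infty$ hypothesis when $sp\le n$, and via Morrey embedding using the $[u]_{W^{s,p}}$ entry of $\textbf{data}$ when $sp>n$) then yields $w_\pm^q\le Cw_\pm^p$. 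The resulting error is thereby controlled by $CR^\alpha r^{q(1-t)}/(r-\rho)^q\int_{B_r}w_\pm^p\,dx$, and absorbing this into the $r^{-sp}\int w_\pm^p\,dx$ part of the right-hand side of Lemma \ref{lem3-2} amounts to verifying the scaling inequality
$$R^\alpha r^{\,sp-tq}\le C.$$
Using $r\ge R/2$ and $R\le 1$, this follows by inspection from the growth condition $tq\le sp+\alpha$ in the case $sp\le n$; the analogous verification for $sp>n$ uses instead the alternative hypothesis $tq\le n+\alpha+(s-n/p)q$ combined with Morrey's embedding to control $\int_{B_r}w_\pm^q\,dx$.

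The main obstacle I expect is precisely this absorption step. It rests on a tight interplay between three ingredients: the Hölder modulus of $a$, which is what produces the $R^\alpha$ factor in the oscillation error; the gap condition \eqref{exp}, which is exactly what makes the ensuing scaling inequality hold; and the restriction $\rho\ge R/2$, without which $r$ cannot be bounded from below in terms of $R$ and the whole absorption collapses. Everything else—the lower bound on the left, the treatment of the tail terms, and the reduction of the local $p$-integral—is bookkeeping once Lemma \ref{lem3-1} is in hand.
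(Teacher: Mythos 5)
Your proposal follows essentially the same route as the paper's own proof: start from Lemma \ref{lem3-1}, decompose $a(x,y)=a^-_R+[a(x,y)-a^-_R]$ (the paper writes it as $a(x,y)\le C[a]_\alpha R^\alpha+a^-_R$) and bound the oscillation by $C[a]_\alpha R^\alpha$ via \eqref{a}, convert $w_\pm^q$ to $w_\pm^p$ using boundedness when $sp\le n$ and Morrey embedding when $sp>n$, and absorb the resulting $R^\alpha r^{sp-tq}$ factor using \eqref{exp} together with $R/2\le r\le R\le 1$. Your observations on dominating the tail prefactors by $(r/(r-\rho))^{n+q}$ and on the scaling verification in both regimes match the paper's computation.
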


\begin{proof}
Via the definition of $a^-_R$ and Lemma \ref{lem3-1}, it directly follows that
\begin{align}
\label{3-2-1}
&\quad\int_{B_\rho}\int_{B_\rho}\frac{|w_\pm(x)-w_\pm(y)|^p}{|x-y|^{n+sp}}+a^-_R\frac{|w_\pm(x)-w_\pm(y)|^q}{|x-y|^{n+tq}}\, dxdy \nonumber\\
&\quad+\int_{B_\rho}w_\pm(x)\left(\int_{\mathbb{R}^n}\frac{w_\mp^{p-1}(y)}{|x-y|^{n+sp}}+a(x,y)\frac{w_\mp^{q-1}(y)}{|x-y|^{n+tq}}\,dy\right)\,dx \nonumber\\
&\le C\Bigg[\frac{1}{(r-\rho)^p}\int_{B_r}\int_{B_r}\frac{|w_\pm(x)+w_\pm(y)|^p}{|x-y|^{n+(s-1)p}}\,dxdy \nonumber\\
&\qquad+\frac{1}{(r-\rho)^q}\int_{B_r}\int_{B_r}a(x,y)\frac{|w_\pm(x)+w_\pm(y)|^q}{|x-y|^{n+(t-1)q}}\,dxdy \nonumber\\
&\qquad+\left(\frac{r}{r-\rho}\right)^{n+sp}\int_{B_r}w_\pm(y)\int_{\mathbb{R}^n\setminus B_\rho}\frac{w^{p-1}_\pm(x)}{|x-x_0|^{n+sp}}\,dxdy  \nonumber\\
&\qquad+\left(\frac{r}{r-\rho}\right)^{n+tq}\int_{B_r}w_\pm(y)\int_{\mathbb{R}^n\setminus B_\rho}a(x,y)\frac{w^{q-1}_\pm(x)}{|x-x_0|^{n+tq}}\,dxdy\Bigg].
\end{align}
For simplicity, we denote the first two integrals at the right-hand side by $I_1$ and $I_2$ in turn. Now we shall evaluate the integral $I_2$. By \eqref{a}, for $(x,y)\in B_r\times B_r$,
$$
a(x,y)=a(x,y)-a^-_R+a^-_R\le C[a]_\alpha R^\alpha+a^-_R\le C[a]_\alpha r^\alpha+a^-_R,
$$
where we used the fact $\frac{R}{2}\le r\le R$. Then,
\begin{align*}
I_2&=\frac{1}{(r-\rho)^q}\int_{B_r}\int_{B_r}a(x,y)\frac{|w_\pm(x)+w_\pm(y)|^q}{|x-y|^{n+(t-1)q}}\,dxdy \nonumber\\
&\le\frac{a^-_R}{(r-\rho)^q}\int_{B_r}\int_{B_r}\frac{|w_\pm(x)+w_\pm(y)|^q}{|x-y|^{n+(t-1)q}}\,dxdy+\frac{Cr^\alpha}{(r-\rho)^q}\int_{B_r}\int_{B_r}\frac{|w_\pm(x)+w_\pm(y)|^q}{|x-y|^{n+(t-1)q}}\,dxdy\\ \nonumber
&\le \frac{Ca^-_Rr^{(1-t)q}}{(r-\rho)^q}\int_{B_r}w^q_\pm\,dx+\frac{Cr^{\alpha+(1-t)q}}{(r-\rho)^q}\int_{B_r}w^q_\pm\,dx.
\end{align*}
Similarly, it is easy to get
\begin{equation}
\label{3-2-2}
I_1\le \frac{Cr^{(1-s)p}}{(r-\rho)^p}\int_{B_r}w^p_\pm\,dx=\frac{r^p}{(r-\rho)^p}\int_{B_r}\frac{w^p_\pm}{r^{sp}}\,dx.
\end{equation}
If $sp\le n$, through the assumptions that $|k|\le \|u\|_{L^\infty(B_R)}$ and $\eqref{exp}_1$, we deduce
\begin{align*}
\frac{r^{\alpha+(1-t)q}}{(r-\rho)^q}\int_{B_r}w^q_\pm\,dx&=\frac{r^{\alpha+(1-t)q}}{(r-\rho)^q}\int_{B_r}w^{q-p}_\pm w^p_\pm\,dx\\
&\le C\|u\|^{q-p}_{L^\infty(B_r)}\frac{r^{\alpha+(1-t)q+sp}}{(r-\rho)^q}\int_{B_r}\frac{w^p_\pm}{r^{sp}}\,dx\\
&\le C\|u\|^{q-p}_{L^\infty(B_r)}\frac{r^{q}}{(r-\rho)^q}\int_{B_r}\frac{w^p_\pm}{r^{sp}}\,dx.
\end{align*}
If $sp>n$, noting that $W^{s,p}(B_R)\hookrightarrow C^{0,s-\frac{n}{p}}(B_R)$ and $|k|\le \|u\|_{L^\infty(B_R)}$, there holds that
\begin{align*}
\left(\mathop{\rm osc}\limits_{B_R}u\right)^{q-p}&=\left(\frac{\mathop{\rm osc}\limits_{B_R}u}{R^{s-\frac{n}{p}}}\right)^{q-p}R^{\left(s-\frac{n}{p}\right) (q-p)}\le C[u]^{q-p}_{W^{s,p}(B_R)}R^{\left(s-\frac{n}{p}\right) (q-p)}\\
&\le C[u]^{q-p}_{W^{s,p}(B_R)}r^{\left(s-\frac{n}{p}\right) q-(sp-n)}\le C[u]^{q-p}_{W^{s,p}(B_R)}r^{tq-sp-\alpha},
\end{align*}
and further
\begin{align*}
\frac{r^{\alpha+(1-t)q}}{(r-\rho)^q}\int_{B_r}w^q_\pm\,dx&\le\frac{r^{\alpha+(1-t)q}}{(r-\rho)^q}\int_{B_r}\left(\mathop{osc}\limits_{B_R}u\right)^{q-p}w^p_\pm\,dx\\
&\le C[u]^{q-p}_{W^{s,p}(B_R)}\frac{r^{\alpha+(1-t)q+tq-sp-\alpha}}{(r-\rho)^q}\int_{B_r}w^p_\pm\,dx\\
&=C[u]^{q-p}_{W^{s,p}(B_R)}\frac{r^q}{(r-\rho)^q}\int_{B_r}\frac{w^p_\pm}{r^{sp}}\,dx.
\end{align*}
Here we have utilized the conditions that $\frac{R}{2}\le r\le R\le1$ and $tq\le \left(s-\frac{n}{p}\right)q+n+\alpha$. As a result,
\begin{equation}
\label{3-2-3}
I_2\le C\frac{r^q}{(r-\rho)^q}\int_{B_r}\frac{w^p_\pm}{r^{sp}}+a^-_R\frac{w^q_\pm}{r^{tq}}\,dx.
\end{equation}
Merging the displays \eqref{3-2-2}, \eqref{3-2-3} with \eqref{3-2-1} leads to
\begin{align*}
&\quad\int_{B_\rho}\int_{B_\rho}\frac{|w_\pm(x)-w_\pm(y)|^p}{|x-y|^{n+sp}}+a^-_R\frac{|w_\pm(x)-w_\pm(y)|^q}{|x-y|^{n+tq}}\, dxdy\\
&\quad+\int_{B_\rho}w_\pm(x)\left(\int_{\mathbb{R}^n}\frac{w_\mp^{p-1}(y)}{|x-y|^{n+sp}}+a(x,y)\frac{w_\mp^{q-1}(y)}{|x-y|^{n+tq}}\,dy\right)\,dx\\
&\le C\Bigg[\frac{r^q}{(r-\rho)^q}\int_{B_r}\frac{w^p_\pm}{r^{sp}}+a^-_R\frac{w^q_\pm}{r^{tq}}\,dx+\left(\frac{r}{r-\rho}\right)^{n+sp}\int_{B_r}w_\pm(y)\int_{\mathbb{R}^n\setminus B_\rho}\frac{w^{p-1}_\pm(x)}{|x-x_0|^{n+sp}}\,dxdy\\
&\qquad+\left(\frac{r}{r-\rho}\right)^{n+tq}\int_{B_r}w_\pm(y)\int_{\mathbb{R}^n\setminus B_\rho}a(x,y)\frac{w^{q-1}_\pm(x)}{|x-x_0|^{n+tq}}\,dxdy\Bigg],
\end{align*}
which implies the desired result.
\end{proof}

\section{Towards weak Harnack inequalities: expansion of positivity}
\label{sec4}

We in this part collect some measure theoretical results as necessary ingredients for proving Theorems \ref{Thm1}--\ref{Thm3}. The forthcoming lemma studies the measure shrinking features on the minimizers of the functional \eqref{main}, which is the fundamental step in investigating weak Harnack estimates.

\begin{lemma}
\label{lem4-1}
Let $u\in\mathcal{A}(\Omega)\cap L^{p-1}_{sp}(\mathbb{R}^n)\cap L^{q-1}_{a,tq}(\Omega,\mathbb{R}^n)$, nonnegative in a ball $B_{4R}:=B_{4R}(x_0)\subset \Omega$ with $R\le1$, be a minimizer of \eqref{main}. Assume $u$ is bounded in $B_{4R}$ if $sp\le n$. Suppose also that
\begin{equation}
\label{4-1-1}
|B_{2R}\cap\{u\ge \tau\}|\ge \nu|B_{2R}|
\end{equation}
for $\tau\ge0$ and some $\nu\in(0,1]$. Then under the hypotheses \eqref{a} and \eqref{thm1}, there holds that, for each $\delta\in\left(0,\frac{1}{8}\right]$, either
\begin{equation}
\label{4-1-2}
\mathrm{Tail} (u_-;x_0,4R)+\mathrm{Tail}_a(u_-;x_0,4R)\ge h_{4R}(\delta \tau)
\end{equation}
or
$$
|B_{2R}\cap\{u\le 2\delta \tau\}|\le C\frac{\delta^{p-1}}{\nu}|B_{2R}|,
$$
where the positive constant $C$ depends only upon $\textbf{data}(B_{4R})$. 
Here the definition of $h_{4R}(\delta \tau)$ can be found in \eqref{hr}.
\end{lemma}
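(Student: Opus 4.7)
The plan is to apply the Caccioppoli-type inequality of Lemma~\ref{lem3-2} to $w_\pm:=(u-k)_\pm$ with the single choice $k=4\delta\tau$ at the radii $\rho=2R$, $r=3R$, and to extract the desired measure estimate from the cross term
\begin{equation*}
\mathcal{I}:=\int_{B_{2R}}w_-(x)\left[\int_{\mathbb{R}^n}\frac{w_+^{p-1}(y)}{|x-y|^{n+sp}}+a(x,y)\frac{w_+^{q-1}(y)}{|x-y|^{n+tq}}\,dy\right]dx
\end{equation*}
that sits on the left-hand side. The point is that the two level sets at our disposal fit precisely into this term: on $\{u\ge\tau\}\cap B_{2R}$, which has measure at least $\nu|B_{2R}|$ by \eqref{4-1-1}, we have $w_+\ge(1-4\delta)\tau\ge\tau/2$ since $\delta\le\tfrac{1}{8}$; while on $\{u\le 2\delta\tau\}\cap B_{2R}$, the set whose measure we wish to bound, we have $w_-\ge 2\delta\tau$ because $u\ge 0$ throughout $B_{4R}$. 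Combining $|x-y|\le 4R$ and $a(x,y)\ge a_{4R}^-$ for $(x,y)\in B_{2R}\times B_{2R}$, restricting the inner integral to $y\in\{u\ge\tau\}\cap B_{2R}$ and the outer integral to $x\in\{u\le 2\delta\tau\}\cap B_{2R}$, and exploiting the identity $H_{4R}(\tau)=\tau\,h_{4R}(\tau)$ built into \eqref{hr}--\eqref{Hr}, I would obtain
\begin{equation*}
\mathcal{I}\;\ge\;c\,\nu\,\delta\,H_{4R}(\tau)\,\bigl|B_{2R}\cap\{u\le 2\delta\tau\}\bigr|.
\end{equation*}

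Next I would bound the right-hand side of Lemma~\ref{lem3-2} from above by $C H_{4R}(4\delta\tau)|B_{2R}|$. Since $u\ge 0$ in $B_{4R}$ forces $w_-\le 4\delta\tau$ there, the bulk term is immediately dominated by this quantity. For the two tail integrals over $\mathbb{R}^n\setminus B_{2R}$ I would split at the intermediate radius $4R$. On the annulus $B_{4R}\setminus B_{2R}$, the bound $w_-\le 4\delta\tau$ together with $a(x,y)\le a_{4R}^-+C[a]_\alpha R^\alpha$ from \eqref{a} yields after radial integration a contribution $\lesssim h_{4R}(4\delta\tau)$. On $\mathbb{R}^n\setminus B_{4R}$ the pointwise splitting $w_-^{p-1}+w_-^{q-1}\lesssim(\delta\tau)^{p-1}+(\delta\tau)^{q-1}+u_-^{p-1}+u_-^{q-1}$ reduces the $u_-$-pieces to the two tails of $u_-$, both of which the negation of \eqref{4-1-2} and the monotonicity of $h_{4R}$ bound by $h_{4R}(4\delta\tau)$. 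What remains is the $(\delta\tau)^{q-1}$-piece of the $a$-tail, for which one invokes $a(x,y)\le a_{4R}^-+C|x-x_0|^\alpha$ valid for $|x-x_0|\ge 4R$: it splits into a piece $\sim a_{4R}^- R^{-tq}$ and a piece $\sim R^{\alpha-tq}$ that converges precisely because $\alpha<tq$ (as in \eqref{thm1}). After integrating $w_-(y)\le 4\delta\tau$ against this over $y\in B_{3R}$, the condition $tq\le sp+\alpha$ together with the $L^\infty$-bound on $u$ encoded in $\textbf{data}(B_{4R})$ absorbs the gap factor $(\delta\tau)^{q-p}R^{\alpha-tq+sp}$ into a constant, so the whole right-hand side is $\lesssim H_{4R}(4\delta\tau)|B_{2R}|$.

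Combining the two bounds and using the elementary comparison $H_{4R}(4\delta\tau)\le(4\delta)^p H_{4R}(\tau)\le C\delta^p H_{4R}(\tau)$, which holds because $\delta\le 1$ and $q\ge p$, I reach
\begin{equation*}
c\,\nu\,\delta\,H_{4R}(\tau)\,\bigl|B_{2R}\cap\{u\le 2\delta\tau\}\bigr|\;\le\;C\,\delta^p\,H_{4R}(\tau)\,|B_{2R}|;
\end{equation*}
cancelling the common $H_{4R}(\tau)$ and dividing by $\nu\delta$ delivers exactly the claimed bound $|B_{2R}\cap\{u\le 2\delta\tau\}|\le C\delta^{p-1}\nu^{-1}|B_{2R}|$. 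The genuine obstacle here is the $a$-weighted $q$-growth tail over $\mathbb{R}^n\setminus B_{4R}$, where the three hypotheses $\alpha<tq$ (convergence of the $|x-x_0|^\alpha$ contribution), $tq\le sp+\alpha$ (absorption of the residual $R^{\alpha-tq}$ into $R^{-sp}$), and the local $L^\infty$-bound on $u$ (absorption of the scale-gap $(\delta\tau)^{q-p}$) must all cooperate to bring this tail down to the same $H_{4R}(4\delta\tau)$-scale as the bulk; everything else is a careful but essentially routine adaptation of the nonlocal $p$-Laplace expansion-of-positivity scheme.
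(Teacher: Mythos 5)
Your proof is correct and follows the same overall scheme as the paper's (Caccioppoli inequality from Lemma \ref{lem3-2} with level $4\delta\tau$, tail control from the negation of \eqref{4-1-2} plus the H\"older continuity of $a$ and the exponent window $\alpha<tq\le sp+\alpha$, lower bound from the cross term restricted to the two level sets). The one genuine departure is in how the measure estimate is extracted at the end: the paper distinguishes the two cases $\eqref{4-3-4}_1$ / $\eqref{4-3-4}_2$ according to which summand of $H_{4R}(\delta\tau)$ dominates, then argues with the $p$-cross-term or the $q$-cross-term separately (and at the end discards the worse $\delta^{q-1}$ in favour of $\delta^{p-1}$). You instead keep both summands, use the identity $\tau h_{4R}(\tau)=H_{4R}(\tau)$ to get the unified lower bound $c\nu\delta H_{4R}(\tau)\,|B_{2R}\cap\{u\le 2\delta\tau\}|$, and then invoke the elementary scaling $H_{4R}(4\delta\tau)\le(4\delta)^pH_{4R}(\tau)$ (valid since $4\delta\le1$ and $p\le q$) so that $H_{4R}(\tau)$ cancels cleanly. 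This removes the dichotomy argument and gives the $\delta^{p-1}$ rate directly; it is a small but real simplification. A minor point of bookkeeping you leave implicit: one should first observe that $\tau\le\|u\|_{L^\infty(B_{4R})}$ without loss of generality (otherwise \eqref{4-1-1} with $\nu>0$ fails), which is what licenses absorbing $(\delta\tau)^{q-p}$ into the data constant in the treatment of the $a$-weighted tail.
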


\begin{proof}
Assume \eqref{4-1-2} is not true. Then let us show the measure theoretical result. We can assume $\tau\le \|u\|_{L^\infty(B_{4R})}$ since \eqref{4-1-1} and $u$ is bounded. 
Fix $l\in\left[\frac{\delta \tau}{2},\|u\|_{L^\infty(B_{4R})}\right]$. Thanks to Lemma \ref{lem3-2} with $\rho:=2R$ and $r:=4R$, we have
\begin{align}
\label{4-1-3}
&\quad\int_{B_{2R}}w_-(x)\left(\int_{\mathbb{R}^n}\frac{w_+^{p-1}(y)}{|x-y|^{n+sp}}+a(x,y)\frac{w_+^{q-1}(y)}{|x-y|^{n+tq}}\,dy\right)\,dx \nonumber\\
&\le C\Bigg[\int_{B_{4R}}\frac{w_-^p(x)}{{(4R)}^{sp}}+a^-_{4R}\frac{w_-^q(x)}{{(4R)}^{tq}}\,dx   \nonumber\\
&\qquad+\int_{B_{4R}}w_-(y)\left(\int_{\mathbb{R}^n\setminus B_{2R}}\frac{w^{p-1}_-(x)}{|x-x_0|^{n+sp}}+a(x,y)\frac{w^{q-1}_-(x)}{|x-x_0|^{n+tq}}\,dx\right)\,dy\Bigg]   \nonumber\\
&\le C\Bigg[H_{4R}(l)|B_{4R}|+\int_{B_{4R}}w_-(y)\int_{\mathbb{R}^n\setminus B_{2R}}\frac{w^{p-1}_-(x)}{|x-x_0|^{n+sp}}\,dxdy  \nonumber\\
&\qquad+\int_{B_{4R}}w_-(y)\int_{\mathbb{R}^n\setminus B_{2R}}a(x,y)\frac{w^{q-1}_-(x)}{|x-x_0|^{n+tq}}\,dxdy\Bigg]  \nonumber\\
&=:C(H_{4R}(l)|B_{4R}|+J_1+J_2),
\end{align}
where $w_\pm:=(u-l)_\pm$, the fact $u\ge0$ in $B_{4R}$ is used and $H_{4R}(l)$ is defined in \eqref{Hr}. Now consider the nonlocal integrals $J_1$ and $J_2$. Recalling the nonnegativity of $u$ in $B_{4R}$ again, we can see
\begin{align*}
J_2&\le l\int_{B_{4R}}\int_{\mathbb{R}^n\setminus B_{2R}}a(x,y)\frac{(u-l)^{q-1}_-(x)}{|x-x_0|^{n+tq}}\,dxdy\\
&\le Cl \left(\int_{B_{4R}}\int_{\mathbb{R}^n\setminus B_{4R}}a(x,y)\frac{u^{q-1}_-(x)}{|x-x_0|^{n+tq}}\,dxdy+\int_{B_{4R}}\int_{\mathbb{R}^n\setminus B_{2R}}a(x,y)\frac{l^{q-1}}{|x-x_0|^{n+tq}}\,dxdy\right) \\
&=:Cl(J_{21}+J_{22}).
\end{align*}
For $J_{22}$, from the condition \eqref{a} and the definition of $a^-_{4R}$, it follows that
\begin{align*}
a(x,y)&=a(x,y)-a(y,y)+a(y,y)-a^-_{4R}+a^-_{4R}\\
&\le [a]_\alpha|x-y|^\alpha+C[a]_\alpha R^\alpha+a^-_{4R}\\
&\le [a]_\alpha(|x-x_0|+|x_0-y|)^\alpha+C[a]_\alpha R^\alpha+a^-_{4R}\\
&\le 4[a]_\alpha|x-x_0|^\alpha+C[a]_\alpha R^\alpha+a^-_{4R}
\end{align*}
for $x\in\mathbb{R}^n\setminus B_{2R}$ and $y\in B_{4R}$. Therefore, in view of $R\le1$ and $\alpha<tq\le sp+\alpha$, we arrive at
\begin{align*}
J_{22}&\le Cl^{q-1}|B_{4R}|\int_{\mathbb{R}^n\setminus B_{2R}}\frac{1}{|x-x_0|^{n+tq-\alpha}}+\frac{R^\alpha+a^-_{4R}}{|x-x_0|^{n+tq}}\,dx\\
&\le Cl^{q-1}|B_{4R}|(R^{\alpha-tq}+a^-_{4R}R^{-tq})\\
&\le C|B_{4R}|\left(\frac{l^{q-p}l^{p-1}}{R^{sp}}+a^-_{4R}\frac{l^{q-1}}{R^{tq}}\right)\\
&\le Ch_{4R}(l)|B_{4R}|,
\end{align*}
where we utilized $l\le\|u\|_{L^\infty(B_{4R})}$ and $C$ depends on $\textbf{data}(B_{4R})$.  When $sp>n$, we observe the relation $sp+\alpha\le n+\alpha+\left(s-\frac{n}{p}\right)q$. 
Here we know the requirement $\alpha<tq$ is exploited to assure the improper integral $\int_{\mathbb{R}^n\setminus B_{2R}}|x-x_0|^{-(n+tq-\alpha)}\,dx$ makes sense. As for $J_{21}$, applying the reverse of \eqref{4-1-2} immediately infers
\begin{align*}
J_{21}\le |B_{4R}|\sup_{y\in B_{4R}}\int_{\mathbb{R}^n\setminus B_{4R}}a(x,y)\frac{u_-^{q-1}(x)}{|x-x_0|^{n+tq}}\,dx\le h_{4R}(l)|B_{4R}|.
\end{align*}
That is,
\begin{equation}
\label{4-1-4}
J_2\le CH_{4R}(l)|B_{4R}|.
\end{equation}
Similarly, by the reverse of \eqref{4-1-2} once more,
\begin{align}
\label{4-1-5}
J_1&\le Cl|B_{4R}|(\mathrm{Tail}(u_-;x_0,4R)+l^{p-1}R^{-sp}) \nonumber\\
&\le CH_{4R}(l)|B_{4R}|.
\end{align}
Putting together \eqref{4-1-4} and \eqref{4-1-5} with \eqref{4-1-3}, we derive
\begin{align}
\label{4-1-6}
&\quad\int_{B_{2R}}\int_{B_{2R}}\frac{w_-(x)w_+^{p-1}(y)}{|x-y|^{n+sp}}\,dydx+a^-_{4R}\int_{B_{2R}}\int_{B_{2R}}\frac{w_-(x)w_+^{q-1}(y)}{|x-y|^{n+tq}}\,dydx \nonumber\\
&\le\int_{B_{2R}}w_-(x)\left(\int_{B_{2R}}\frac{w_+^{p-1}(y)}{|x-y|^{n+sp}}+a(x,y)\frac{w_+^{q-1}(y)}{|x-y|^{n+tq}}\,dy\right)\,dx \nonumber\\
&\le CH_{4R}(l)|B_{4R}|.
\end{align}

In the sequel, we make two mutually exclusive alternatives:
\begin{equation}
\label{4-1-7}
\left(\frac{\delta \tau}{(4R)^{s}}\right)^p\ge a^-_{4R}\left(\frac{\delta \tau}{(4R)^{t}}\right)^q  \quad\text{and} \quad
\left(\frac{\delta \tau}{(4R)^{s}}\right)^p< a^-_{4R}\left(\frac{\delta \tau}{(4R)^{t}}\right)^q.
\end{equation}

\medskip

\textbf{Case $\eqref{4-1-7}_1$}. By letting $l=4\delta \tau$, we conclude from $\delta\in\left(0,\frac{1}{8}\right]$, \eqref{4-1-1} and \eqref{4-1-6} that
\begin{align*}
C\left(\frac{\delta \tau}{(4R)^{s}}\right)^p|B_{4R}|&\ge\int_{B_{2R}}\int_{B_{2R}}\frac{(u-4\delta\tau)_-(x)(u-4\delta\tau)_+^{p-1}(y)}{|x-y|^{n+sp}}\,dydx \nonumber\\
&\ge (4R)^{-n-sp}\int_{B_{2R}\cap\{u\ge \tau\}}(u-4\delta\tau)_+^{p-1}\,dy\cdot\int_{B_{2R}\cap\{u\le 2\delta\tau\}}(u-4\delta\tau)_-\,dx \nonumber\\
&\ge(4R)^{-n-sp}[(1-4\delta)\tau]^{p-1}|B_{2R}\cap\{u\ge \tau\}|\cdot 2\delta \tau|B_{2R}\cap\{u\le 2\delta\tau\}| \nonumber\\
&\ge \frac{\nu \delta \tau^p}{2^{p-2}(4R)^{n+sp}}|B_{2R}||B_{2R}\cap\{u\le 2\delta\tau\}|,
\end{align*}
i.e.,
\begin{equation}
\label{4-1-8}
|B_{2R}\cap\{u\le 2\delta\tau\}|\le C\frac{\delta^{p-1}}{\nu}|B_{2R}|.
\end{equation}

\medskip

\textbf{Case $\eqref{4-1-7}_2$}. Notice $a^-_{4R}>0$ at this time. Analogously, it yields that
\begin{align*}
Ca^-_{4R}\left(\frac{\delta \tau}{(4R)^{t}}\right)^q|B_{4R}|&\ge a^-_{4R}\int_{B_{2R}}\int_{B_{2R}}\frac{(u-4\delta\tau)_-(x)(u-4\delta\tau)_+^{q-1}(y)}{|x-y|^{n+tq}}\,dydx\\
&\ge a^-_{4R}\frac{\nu \delta \tau^q}{2^{q-2}(4R)^{n+tq}}|B_{2R}||B_{2R}\cap\{u\le 2\delta\tau\}|,
\end{align*}
namely,
\begin{equation}
\label{4-1-9}
|B_{2R}\cap\{u\le 2\delta\tau\}|\le C\frac{\delta^{q-1}}{\nu}|B_{2R}|.
\end{equation}
In summary, we deduce the desired conclusion from \eqref{4-1-8} and \eqref{4-1-9} due to $\delta^{q-1}\le\delta^{p-1}$.
\end{proof}

The second step towards weak Harnack inequalities is made in the upcoming growth lemma, a certain pointwise estimate extracted through some critical measure density condition in a ball.

\begin{lemma}
\label{lem4-2}
Let $u\in\mathcal{A}(\Omega)\cap L^{p-1}_{sp}(\mathbb{R}^n)\cap L^{q-1}_{a,tq}(\mathbb{R}^n)$, nonnegative in a ball $B_{4R}:=B_{4R}(x_0)\subset \Omega$ with $R\le1$, be a minimizer of \eqref{main}. For the case $sp\le n$, assume $u$ is bounded in $B_{4R}$. Besides, suppose that
\begin{equation}
\label{4-2-1}
|B_{2R}\cap\{u\ge \tau\}|\ge \nu|B_{2R}|
\end{equation}
for $\tau\ge0$ and some $\nu\in(0,1]$. Then under the assumptions \eqref{a} and \eqref{thm1}, there is a $\delta\in\left(0,\frac{1}{8}\right]$ that depends on $\textbf{data}(B_{4R})$ and $\nu$ such that either
\begin{equation}
\label{4-2-2}
\mathrm{Tail} (u_-;x_0,4R)+\mathrm{Tail}_a(u_-;x_0,4R)\ge h_{4R}(\delta \tau)
\end{equation}
or
$$
u\ge \delta \tau  \quad\text{in } B_R.
$$
\end{lemma}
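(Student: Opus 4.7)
The plan is to assume that the tail alternative \eqref{4-2-2} is violated, and then deduce the pointwise lower bound $u \ge \delta\tau$ in $B_R$ by a De Giorgi-type iteration on nested balls shrinking from $B_{2R}$ to $B_R$, seeded by the measure decay provided by Lemma \ref{lem4-1}. Since \eqref{4-1-2} and \eqref{4-2-2} coincide, applying Lemma \ref{lem4-1} with a parameter $\delta_0\in(0,1/8]$ to be chosen yields
$$|B_{2R}\cap\{u\le 2\delta_0\tau\}|\le C\,\delta_0^{p-1}\nu^{-1}\,|B_{2R}|,$$
and by taking $\delta_0$ sufficiently small in terms of $\textbf{data}(B_{4R})$ and $\nu$, the right-hand side can be made smaller than any preassigned threshold. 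I ultimately set $\delta:=\delta_0$.

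For the iteration itself, I would define $R_j:=R(1+2^{-j})$, $k_j:=\delta\tau(1+2^{-j})$, $w_j:=(u-k_j)_-$, and $A_j:=B_{R_j}\cap\{u<k_j\}$, so that $R_0=2R$, $R_\infty=R$, $k_0=2\delta\tau$, $k_\infty=\delta\tau$, and $A_0$ is precisely the set controlled in the seed estimate. Lemma \ref{lem3-2} is then applied to $w_-$ on the pair $(B_{R_{j+1}},B_{R_j})$ at level $k_j$. The bulk term on the right-hand side is dominated by $H_R(k_j)\,|A_j|$ because $w_j\le k_j$ and is supported in $A_j$. The nonlocal tails split into a contribution over $B_{4R}\setminus B_{R_{j+1}}$, where the sign condition $u\ge0$ in $B_{4R}$ forces $w_j\le k_j$, and a contribution over $\mathbb{R}^n\setminus B_{4R}$, which is bounded by $\bigl[\mathrm{Tail}(u_-;x_0,4R)+\mathrm{Tail}_a(u_-;x_0,4R)\bigr]$ times suitable powers of $k_j$; by the assumed failure of \eqref{4-2-2}, all of these are absorbed into a universal multiple of $2^{j(n+q)}\,H_R(k_j)\,|A_j|$, mirroring the tail manipulation carried out in the proof of Lemma \ref{lem4-1}.

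The Caccioppoli bound is then combined with the fractional Sobolev-type inequality of Lemma \ref{lem2-2} (with $L_0=a^-_R$, applied to $w_j$ on $B_{R_j}$). Using that $w_j\ge k_j-k_{j+1}=\delta\tau\,2^{-j-1}$ on the superlevel set $A_{j+1}\subset A_j$, together with the elementary comparison $H_R(\delta\tau\,2^{-j-1})\ge C^{-1}2^{-jq}H_R(\delta\tau)$ and the support-size gain factor $(|A_j|/|B_R|)^{sp/n}$ furnished by Lemma \ref{lem2-2}, Chebyshev's inequality yields a geometric recursion of the form
$$Y_{j+1}\le K\, b^{\,j}\, Y_j^{1+\sigma},\qquad Y_j:=|A_j|/|B_{R_j}|,$$
with $\sigma>0$ and $K,b>1$ depending only on $\textbf{data}(B_{4R})$. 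Lemma \ref{lem-2-3} then forces $Y_j\to 0$ as soon as $Y_0\le K^{-1/\sigma}b^{-1/\sigma^2}$, a smallness condition enforced by taking $\delta_0^{p-1}\nu^{-1}$ below that threshold. This pins down $\delta$ in terms of $\textbf{data}(B_{4R})$ and $\nu$ and gives $|B_R\cap\{u<\delta\tau\}|=0$, i.e.\ $u\ge\delta\tau$ in $B_R$.

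I expect the main obstacle to be the unified treatment of the $p$- and $q$-phases inside the Caccioppoli-plus-Sobolev recursion: the degenerate $q$-term must be carried along with the non-degenerate $p$-term through the composite scale $H_R$, and the recursion must be closed by a single Sobolev gain factor. This is exactly where the growth gap hypothesis \eqref{thm1} plays its role, with $tq\le sp+\alpha$ taming the cross tails and the trade-off between $a(x,y)|\cdot|^q$ and $|\cdot|^p$, and $\alpha<tq$ ensuring the relevant improper integrals converge, in perfect analogy with the argument for Lemma \ref{lem4-1}.
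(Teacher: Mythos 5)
Your overall plan — seed the iteration with Lemma~\ref{lem4-1}, run a De Giorgi iteration on shrinking balls and levels driven by Lemma~\ref{lem3-2} plus a fractional Sobolev gain, and close with Lemma~\ref{lem-2-3} after choosing $\delta$ small — is exactly the paper's route, and the geometric setup ($R_j$, $k_j$, $A_j$, $Y_j$, the factor $2^{j(n+q)}$, the seed from Lemma~\ref{lem4-1}) matches. However, the step in which you invoke \emph{Lemma~\ref{lem2-2}} to supply the Sobolev gain is a genuine gap. Lemma~\ref{lem2-2} is stated under the hypothesis \eqref{thm4}, namely $q/p\le 1+sp/(n-sp)$ for $sp<n$, and this is what licenses the term with power $q/p$ on its right-hand side (it encodes $W^{s,p}\hookrightarrow L^{q}$). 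In Lemma~\ref{lem4-2} the standing hypothesis is \eqref{thm1} ($\alpha<tq\le sp+\alpha$), which does \emph{not} imply \eqref{thm4}: e.g.\ $n=2$, $s=0.1$, $t=0.2$, $p=2$, $q=3$, $\alpha=1$ satisfies \eqref{thm1} but $q/p=1.5 > n/(n-sp)\approx1.11$. So Lemma~\ref{lem2-2} simply cannot be applied at this point of the argument.

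The paper sidesteps this by \emph{not} routing the $q$-term through the $W^{s,p}$ embedding at all. Instead, it distinguishes the phase $a^-_{4R}>0$, where Lemma~\ref{lem3-2} shows $u\in W^{s,p}(B_{4R})\cap W^{t,q}(B_{4R})$, and then applies the fractional Sobolev--Poincar\'e inequality \emph{separately} to the $W^{s,p}$-seminorm and the $W^{t,q}$-seminorm, using the common exponent $\beta:=p^*_s/p\le q^*_t/q$ from \eqref{4-2-3} (valid because $sp\le tq$). This yields both Gagliardo terms linearly, and the measure gain comes from one H\"older step with exponent $\beta$, so the recursion closes as $Y_{i+1}\le C\,2^{i(n+2q)\beta}\,Y_i^{\beta}$ with no $q/p$-power and no need for \eqref{thm4}. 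The complementary phase $a^-_{4R}=0$ is then handled by dropping every term carrying $a^-_{4R}$. If you replace your Lemma~\ref{lem2-2} step with this two-phase Sobolev--Poincar\'e argument (and add the explicit $a^-_{4R}=0$ case), the rest of your proposal goes through.
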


\begin{proof}
If \eqref{4-2-2} is not true, we are going to prove $u\ge \delta \tau$ in $B_R$ in the remaining contents. Here assume $\tau\le\|u\|_{L^\infty}(B_{4R})$ from \eqref{4-2-1} and the boundedness of $u$. Let $\delta \tau\le l<k\le2\delta\tau$ and $R\le\rho<r\le 2R$. Define
$$
E^-(k,r):=B_r\cap\{u\le k\}.
$$
We pick
\begin{equation}
\label{4-2-3}
\beta:=\frac{p_s^*}{p}\le\frac{q_t^*}{q}.
\end{equation}
In fact, if $tq<n$, then $\beta=\frac{n}{n-sp}\le\frac{n}{n-tq}$ via $sp\le tq$; when $tq\ge n$, then we take $q_t^*$ sufficiently large so that \eqref{4-2-3} holds true.

In the scenario $a^-_{4R}>0$, we can see that the minimizer $u$ belongs to $W^{s,p}(B_{4R})\cap W^{t,q}(B_{4R})$ so we can employ the fractional Sobolev embedding theorem and H\"{o}lder inequality to deduce
\begin{align*}
&\quad\left[\left(\frac{k-l}{\rho^s}\right)^p+a^-_{4R}\left(\frac{k-l}{\rho^t}\right)^q\right]\left(\frac{E^-(l,\rho)}{|B_\rho|}\right)^\frac{1}{\beta}\\
&\le\left(\mint_{B_\rho}\left[\left(\frac{(u-k)_-}{\rho^s}\right)^p+a^-_{4R}\left(\frac{(u-k)_-}{\rho^t}\right)^q\right]^\beta\,dx\right)^\frac{1}{\beta}\\
&\le C\left(\mint_{B_\rho}\left(\frac{w_--(w_-)_{B_\rho}}{\rho^s}\right)^{p\beta}\,dx\right)^\frac{1}{\beta}+Ca^-_{4R}\left(\mint_{B_\rho}
\left(\frac{w_--(w_-)_{B_\rho}}{\rho^t}\right)^{q\beta}\,dx\right)^\frac{1}{\beta}\\
&\quad+C\mint_{B_\rho}\left(\frac{w_-}{\rho^s}\right)^p+a^-_{4R}\left(\frac{w_-}{\rho^t}\right)^{q}\,dx\\
&\le C\mint_{B_\rho}\int_{B_\rho}\frac{|w_-(x)-w_-(y)|^p}{|x-y|^{n+sp}}\,dxdy+C\mint_{B_\rho}\int_{B_\rho}a^-_{4R}\frac{|w_-(x)-w_-(y)|^q}{|x-y|^{n+tq}}\,dxdy\\
&\quad+CH_{4R}(k)\frac{|E^-(k,r)|}{|B_r|},
\end{align*}
where $w_-:=(u-k)_-$, and in the last inequality the nonnegativity of $u$ in $B_{4R}$ and the relation $R\le \rho<r\le 2R$ were used. Exploiting Lemma \ref{lem3-2} in the display above and recalling the rang of $\rho$ and $r$, we arrive at
\begin{align*}
&\quad H_{4R}(k-l)\left(\frac{|E^-(l,\rho)|}{|B_\rho|}\right)^\frac{1}{\beta}\\
&\le C\left(\frac{r}{r-\rho}\right)^{n+q}\Bigg[\mint_{B_r}\left(\frac{w_-}{\rho^s}\right)^p+a^-_{4R}\left(\frac{w_-}{\rho^t}\right)^{q}\,dx\\
&\quad+\mint_{B_r}w_-(y)\left(\int_{\mathbb{R}^n\setminus B_\rho}\frac{w_-^{p-1}(x)}{|x-x_0|^{n+sp}}+a(x,y)\frac{w_-^{q-1}(x)}{|x-x_0|^{n+tq}}\,dx\right)dy\Bigg]+CH_{4R}(k)\frac{|E^-(k,r)|}{|B_r|}\\
&\le CH_{4R}(k)\frac{|E^-(k,r)|}{|B_r|}+C\left(\frac{r}{r-\rho}\right)^{n+q}H_{4R}(k)\frac{|E^-(k,r)|}{|B_r|}\\
&\quad+C\left(\frac{r}{r-\rho}\right)^{n+q}\mint_{B_r}w_-(y)\left(\int_{\mathbb{R}^n\setminus B_\rho}\frac{w_-^{p-1}(x)}{|x-x_0|^{n+sp}}+a(x,y)\frac{w_-^{q-1}(x)}{|x-x_0|^{n+tq}}\,dx\right)dy,
\end{align*}
where $H_{4R}(k)$ is defined as \eqref{Hr}. Analogous to the treatment of $J_1$ and $J_2$ in the proof of Lemma \ref{lem4-1}, we by the reverse of \eqref{4-2-2} have
\begin{align*}
&\quad\int_{B_r}w_-(y)\int_{\mathbb{R}^n\setminus B_\rho}\frac{w_-^{p-1}(x)}{|x-x_0|^{n+sp}}\,dxdy\\
&\le Ck|E^-(k,r)|(\mathrm{Tail}(u_-;x_0,4R)+k^{p-1}\rho^{-sp}) \le CH_{4R}(k)|E^-(k,r)|
\end{align*}
and
\begin{align*}
&\quad\int_{B_r}w_-(y)\int_{\mathbb{R}^n\setminus B_\rho}a(x,y)\frac{w_-^{q-1}(x)}{|x-x_0|^{n+tq}}\,dxdy\\
&\le Ck\int_{B_r}\chi_{E^-(k,r)}\int_{\mathbb{R}^n\setminus B_{4R}}a(x,y)\frac{u_-^{q-1}(x)}{|x-x_0|^{n+tq}}\,dxdy\\
&\quad+Ck^q\int_{B_r}\chi_{E^-(k,r)}\int_{\mathbb{R}^n\setminus B_\rho}\frac{a(x,y)}{|x-x_0|^{n+tq}}\,dxdy\\
&\le Ck|E^-(k,r)|\mathrm{Tail}_a(u_-;x_0,4R)+Ck^q|E^-(k,r)|(\rho^{\alpha-tq}+a^-_{4R}\rho^{-tq})\\
&\le C\left(H_{4R}(k)+k^{q-p}\frac{k^p}{(4R)^{sp}}+a^-_{4R}\frac{k^q}{(4R)^{tq}}\right)|E^-(k,r)|\\
&\le CH_{4R}(k)|E^-(k,r)|.
\end{align*}
Here we made use of $\alpha<tq\le sp+\alpha$, $\rho\approx R\le1$ and $k\le \|u\|_{L^\infty(B_{4R})}$. Merging the preceding three estimates infers
\begin{equation}
\label{4-2-4}
\frac{|E^-(l,\rho)|}{|B_\rho|}\le C\left(\frac{r}{r-\rho}\right)^{(n+q)\beta}\left(\frac{H_{4R}(k)}{H_{4R}(k-l)}\right)^\beta\left(\frac{|E^-(k,r)|}{|B_r|}\right)^\beta
\end{equation}
with $C>1$ depending upon $\textbf{data}(B_{4R})$.

Now we perform an iteration procedure. Denote
$$
r_i=(1+2^{-i})R, \quad k_i=(1+2^{-i})\delta\tau \quad\text{and}\quad  Y_i=\frac{|E^-(k_i,r_i)|}{|B_{r_i}|}
$$
for $i=0,1,2,\cdots$. Applying \eqref{4-2-4} with $r=r_i,\rho=r_{i+1},k=k_i$ and $l=k_{i+1}$ obtains
\begin{equation}
\label{4-2-5}
Y_{i+1}\le C2^{i(n+2q)\beta} Y_i^{(\beta-1)+1},
\end{equation}
where we need to notice
$$
\frac{H_{4R}(k_i)}{H_{4R}(k_i-k_{i+1})}\le C\frac{H_{4R}(\delta\tau)}{H_{4R}(2^{-i-1}\delta\tau)}\le C2^{iq}.
$$
At this point, to utilize the geometric convergence lemma, we enforce
$$
Y_0=\frac{|E^-(2\delta\tau,2R)|}{|B_{2R}|}\le C^\frac{-1}{\beta-1}2^{-\frac{(n+2q)\beta}{(\beta-1)^2}}=:\nu_0.
$$
This can be realized through Lemma \ref{lem4-1}. We choose some $\delta\in\left(0,\frac{1}{8}\right]$, depending on $\textbf{data}(B_{4R})$ and $\nu$, such that
\begin{equation}
\label{4-2-6}
C\frac{\delta^{p-1}}{\nu}\le \nu_0.
\end{equation}
That is to say, we deduce from \eqref{4-2-5}, \eqref{4-2-6} and Lemma \ref{lem-2-3} that
$$
Y_i\rightarrow0  \quad (i\rightarrow\infty),
$$
namely,
$$
u\ge\delta \tau \quad\text{in }  B_R.
$$

If $a^-_{4R}=0$, this case is easier, and we just repeat the previous processes without the terms involving $a^-_{4R}$. Hence, the proof is finished now.
\end{proof}

Almost verbatim following the proof of Lemma \ref{lem4-2}, we can deduce a slightly different De Giorgi-type lemma as below.

\begin{corollary}
\label{cor4-2-1}
Let the assumptions \eqref{a} and \eqref{thm1} hold. Suppose that $u\in\mathcal{A}(\Omega)\cap L^{p-1}_{sp}(\mathbb{R}^n)\cap L^{q-1}_{a,tq}(\Omega,\mathbb{R}^n)$, nonnegative in a ball $B_{4R}:=B_{4R}(x_0)\subset \Omega$ with $R\le1$, is a minimizer of \eqref{main}. If $sp\le n$, assume $u$ is bounded in $B_{4R}$. Then one can find a constant $\nu\in(0,1)$, depending on $\textbf{data}(B_{4R})$, such that if
$$
|B_{2R}\cap\{u\le \tau\}|\le \nu|B_{2R}|
$$
with a parameter $\tau\ge0$, then whenever
$$
\mathrm{Tail} (u_-;x_0,4R)+\mathrm{Tail}_a(u_-;x_0,4R)\le h_{4R}(\tau),
$$
it holds that
$$
u\ge\frac{1}{2}\tau  \quad \text{in } B_R.   
$$
\end{corollary}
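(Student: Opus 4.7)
The plan is to mimic the De Giorgi iteration of Lemma~\ref{lem4-2}, but starting directly from the hypothesis on $|B_{2R}\cap\{u\le\tau\}|$ rather than manufacturing an initial smallness via Lemma~\ref{lem4-1}. Concretely, I would introduce
$$
r_i=(1+2^{-i})R,\qquad k_i=\tfrac{\tau}{2}(1+2^{-i}),\qquad Y_i=\frac{|B_{r_i}\cap\{u\le k_i\}|}{|B_{r_i}|},
$$
so that $r_0=2R$, $k_0=\tau$, $r_\infty=R$, $k_\infty=\tau/2$, and $Y_0=|B_{2R}\cap\{u\le\tau\}|/|B_{2R}|\le\nu$ by assumption.

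For the recursive step, I would apply the fractional Sobolev embedding on $B_{r_{i+1}}$ to the truncation $(u-k_i)_-$, followed by the Caccioppoli estimate of Lemma~\ref{lem3-2} with $\rho=r_{i+1}$, $r=r_i$, exactly as in the derivation leading to \eqref{4-2-4}. The key observation is that the present tail hypothesis,
$$
\mathrm{Tail}(u_-;x_0,4R)+\mathrm{Tail}_a(u_-;x_0,4R)\le h_{4R}(\tau)\le C\,h_{4R}(k_i),
$$
where the second inequality uses $k_i\in[\tau/2,\tau]$ together with the monotonicity and scaling of $h_{4R}$, plays precisely the role of the reverse of \eqref{4-2-2} in controlling the two nonlocal tail contributions $J_1$ and $J_2$ in the proof of Lemma~\ref{lem4-2}. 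Consequently the same arithmetic carries over verbatim and yields
$$
Y_{i+1}\le C\,2^{i(n+2q)\beta}\,Y_i^{1+(\beta-1)}
$$
with $\beta:=p^*_s/p$ chosen as in \eqref{4-2-3} and $C=C(\textbf{data}(B_{4R}))$.

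To close the argument I would invoke the geometric convergence Lemma~\ref{lem-2-3}: it suffices to have $Y_0\le\nu_0:=C^{-1/(\beta-1)}2^{-(n+2q)\beta/(\beta-1)^2}$, so I simply define $\nu:=\nu_0$, which depends only on $\textbf{data}(B_{4R})$. The standing assumption $Y_0\le\nu$ then forces $Y_i\to 0$, i.e.\ $u\ge\tau/2$ a.e.\ in $B_R$; the degenerate case $a^-_{4R}=0$ is handled exactly as in Lemma~\ref{lem4-2} by dropping the $q$-growth terms. I do not anticipate any substantive new obstacle, since the hypothesis of the corollary is tailor-made to bypass Lemma~\ref{lem4-1}; the only point requiring verification is the two-sided bound $h_{4R}(k_i)\asymp h_{4R}(\tau)$ along the iteration, which is immediate from $\tau/2\le k_i\le\tau$.
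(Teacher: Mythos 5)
Your proposal is correct and follows essentially the same route the paper intends: the authors state that Corollary~\ref{cor4-2-1} is obtained ``almost verbatim following the proof of Lemma~\ref{lem4-2}'', and your argument makes exactly that precise by rerunning the De~Giorgi iteration of \eqref{4-2-4}--\eqref{4-2-5} with the level and radius sequences $k_i=\tfrac{\tau}{2}(1+2^{-i})$, $r_i=(1+2^{-i})R$, using the hypothesized measure bound $Y_0\le\nu$ in place of the smallness produced by Lemma~\ref{lem4-1}. Your observation that $h_{4R}(\tau)\le 2^{q-1}h_{4R}(k_i)$ for $k_i\in[\tau/2,\tau]$ (doubling of $h_{4R}$, not mere monotonicity) is the right way to transport the tail hypothesis into the role of the reversed \eqref{4-2-2}, and the choice $\nu:=C^{-1/(\beta-1)}2^{-(n+2q)\beta/(\beta-1)^2}$ is precisely what the geometric convergence lemma requires. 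The only cosmetic point you leave implicit, as does the paper in Lemma~\ref{lem4-2}, is that one may assume $\tau\le\|u\|_{L^\infty(B_{4R})}$, since otherwise $|B_{2R}\cap\{u\le\tau\}|=|B_{2R}|$ and the measure hypothesis is violated for any $\nu<1$.
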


With the help of a Krylov-Sofonov type covering lemma \cite[Lemma 7.2]{Juha2001} (see also \cite[Lemma 6.6]{Coz}) and Lemma \ref{lem4-2} above, we shall arrive at the forthcoming conclusion refining Lemma \ref{lem4-2}.

\begin{corollary}
\label{cor4-2-2}
Let $k\in \mathbb{N}$ and the conditions \eqref{a}, \eqref{thm1} be fulfilled. Assume that $u\in\mathcal{A}(\Omega)\cap L^{p-1}_{sp}(\mathbb{R}^n)\cap L^{q-1}_{a,tq}(\Omega,\mathbb{R}^n)$ nonnegative in a ball $B_{4R}:=B_{4R}(x_0)\subset \Omega$ with $R\le1$ is a minimizer of \eqref{main}. Suppose also $u$ is bounded in $B_{4R}$ if  $sp\le n$. When
$$
|B_{R}\cap\{u\ge \tau\}|\ge \nu^k|B_{R}|,
$$
for $\tau\ge0$ and $\nu\in(0,1]$, there exists a constant $\delta\in\left(0,\frac{1}{8}\right]$ depending only upon $\textbf{data}(B_{4R})$ and $\nu$ (not upon $k$) such that either
$$
\mathrm{Tail} (u_-;x_0,4R)+\mathrm{Tail}_a(u_-;x_0,4R)\ge h_{4R}(\delta^k\tau),
$$
or
$$
u\ge\delta^k\tau  \quad \text{in } B_R.   
$$
\end{corollary}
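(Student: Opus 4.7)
My plan is to iterate Lemma~\ref{lem4-2} along the shrinking levels $\tau_j := \delta^j \tau$ for $j=0,1,\ldots,k$, tracking the super-level sets $A_j := B_R \cap \{u \geq \tau_j\}$ via a Krylov--Safonov type covering lemma (in the form of \cite[Lemma~7.2]{Juha2001} or \cite[Lemma~6.6]{Coz}). The parameter $\delta \in (0, 1/8]$ will be the one produced by Lemma~\ref{lem4-2} for density $\nu$, possibly shrunk by a universal factor to absorb the fixed dilation constant of the covering lemma and the distortion constants appearing below. Because both Lemma~\ref{lem4-2} and the covering lemma depend only on $\textbf{data}(B_{4R})$ and $\nu$, this choice of $\delta$ is independent of $k$, as required.

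Assuming the second alternative of the corollary fails, i.e.,
$$
\mathrm{Tail}(u_-;x_0,4R) + \mathrm{Tail}_a(u_-;x_0,4R) < h_{4R}(\delta^k \tau),
$$
I want to show inductively that $|A_{j+1}| \geq |A_j|/\nu$ (or $A_{j+1} = B_R$ already) for $j = 0, 1, \ldots, k-1$. Starting from $|A_0| \geq \nu^k |B_R|$, after $k$ such steps we obtain $|A_k| \geq |B_R|$, forcing $u \geq \delta^k \tau$ a.e.\ on $B_R$. The inductive step is where the covering lemma enters: for every ball $B_\rho(y)$ with $B_{3\rho}(y) \subset B_R$ and $|A_j \cap B_\rho(y)| > \nu |B_\rho(y)|$, I apply Lemma~\ref{lem4-2} on the ball $B_{12\rho}(y) \subset B_{4R}(x_0)$ at level $\tau_j$; since $|A_j \cap B_\rho(y)| \geq \nu|B_\rho(y)|$ verifies the density hypothesis on $B_{2\cdot 3\rho}(y)$ (up to again adjusting $\delta$ for the factor $3$), the Lemma yields $u \geq \delta \tau_j = \tau_{j+1}$ on $B_{3\rho}(y)$, so $B_{3\rho}(y) \subset A_{j+1}$. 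The covering lemma then packages these local gains into the measure-doubling statement $|A_{j+1}| \geq |A_j|/\nu$.

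The main obstacle will be verifying, uniformly in $(y,\rho)$, the tail smallness hypothesis required by Lemma~\ref{lem4-2} on the sub-ball $B_{12\rho}(y)$ at level $\tau_{j+1}$. The inclusion $B_{3\rho}(y) \subset B_R(x_0)$ forces $B_{12\rho}(y) \subset B_{4R}(x_0)$ (since $|y-x_0| + 12\rho \leq (R-3\rho) + 12\rho \leq 4R$), hence $u_- \equiv 0$ on $B_{12\rho}(y)$ and the tail integrals at $y$ reduce to integrals over $\mathbb{R}^n \setminus B_{4R}(x_0)$. On that set, $|x-y| \geq |x-x_0| - R \geq \tfrac{3}{4}|x-x_0|$, so both $\mathrm{Tail}(u_-;y,12\rho)$ and $\mathrm{Tail}_a(u_-;y,12\rho)$ are bounded by a dimensional multiple of $\mathrm{Tail}(u_-;x_0,4R)$ and $\mathrm{Tail}_a(u_-;x_0,4R)$ respectively (the latter also using \eqref{a} as in the proof of Lemma~\ref{lem4-1} to compare $a(x,y)$ with $a(x,y')$ for $y,y' \in B_R(x_0)$).

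Finally, since $h_r(\lambda)$ is monotone decreasing in $r$ and increasing in $\lambda$, while $12\rho \leq 4R$ and $\delta \tau_j = \tau_{j+1} \geq \tau_k = \delta^k \tau$, we have $h_{12\rho}(\delta \tau_j) \geq h_{4R}(\delta^k \tau)$, so the standing tail bound implies the sub-ball tail condition after absorbing the fixed constants into a single final decrease of $\delta$. This closes the induction and completes the proof.
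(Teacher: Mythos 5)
Your proposal is correct and matches the paper's (implicit) strategy: the paper omits the proof, stating that it follows the Krylov--Safonov covering argument of \cite[Lemma 6.7]{Coz} almost verbatim, with the only new ingredient being the treatment of $\mathrm{Tail}_a$ alongside the standard tail; this is exactly the plan you carry out, including the tail-comparison bound on sub-balls using $|x-y|\ge\frac{3}{4}|x-x_0|$, the monotonicity of $h_r$ in $r$ (noting that $a^-_{12\rho,y}\ge a^-_{4R,x_0}$ since the infimum is taken over a smaller set), and the absorption of the fixed dilation and distortion constants into a single final shrinking of $\delta$. One small remark: for the $\mathrm{Tail}_a$ comparison the H\"older continuity of $a$ is not actually needed, since $B_{12\rho}(y)\subset B_{4R}(x_0)$ implies the supremum over $z\in B_{12\rho}(y)$ is directly dominated by the supremum over $z'\in B_{4R}(x_0)$; this makes your argument slightly simpler than you indicate, but is otherwise harmless.
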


The proof of this statement is highly analogous to that of \cite[Lemma 6.7]{Coz}, as the structure of the functional \eqref{main} is not exploited at all except for addressing the nonlocal tails in the showing processes. Indeed, there is an extra nonlocal tail with weight, $\mathrm{Tail}_a$, but we can treat it in a similar way to the tail generated by the $p$-growth term. Hence we omit the details here to avoid superfluous repetition.

The measure shrinking estimate aforementioned, Lemma \ref{lem4-1}, needs critical measure density information ahead of time, but we could drop such 
condition in the next conclusion encoded according to the local integrals now.

\begin{lemma}
\label{lem4-3}
Let the assumptions \eqref{a}, \eqref{thm1} be in force. Let $u\in\mathcal{A}(\Omega)\cap L^{p-1}_{sp}(\mathbb{R}^n)\cap L^{q-1}_{a,tq}(\Omega,\mathbb{R}^n)$, nonnegative in a ball $B_{4R}:=B_{4R}(x_0)\subset \Omega$ with $R\le1$, be a minimizer of \eqref{main}. We assume $u$ is bounded in $B_{4R}$ when $sp\le n$. Then there exists a $C>1$, depending on $\textbf{data}(B_{4R})$, such that whenever
\begin{equation}
\label{4-3-1}
\mathrm{Tail} (u_-;x_0,4R)+\mathrm{Tail}_a(u_-;x_0,4R)\le h_{4R}(\tau)
\end{equation}
it holds that
\begin{equation}
\label{4-3-2}
|\{u\le \tau\}\cap B_{2R}|\le C\left(\frac{\tau^{p-1}}{(u^{p-1})_{B_{2R}}}+\frac{\tau^{q-1}}{(u^{q-1})_{B_{2R}}}\right)|B_{2R}|
\end{equation}
with $0\le \tau$.
\end{lemma}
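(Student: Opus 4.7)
The plan is to apply the Caccioppoli estimate (Lemma~\ref{lem3-2}) at level $k=2\tau$ and read the measure bound directly off the cross-interaction term on the left-hand side. A trivial-case reduction comes first: if $(u^{p-1})_{B_{2R}}\le C_*\tau^{p-1}$ or $(u^{q-1})_{B_{2R}}\le C_*\tau^{q-1}$ for a sufficiently large absolute constant $C_*$, then the right-hand side of \eqref{4-3-2} dominates $|B_{2R}|$ once $C$ is chosen large enough and the claim is automatic; this also subsumes the situation $2\tau>\|u\|_{L^\infty(B_{4R})}$. I therefore assume henceforth that $2\tau\le\|u\|_{L^\infty(B_{4R})}$ and that both integral averages strictly dominate the corresponding powers of $\tau$.

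Setting $w_\pm:=(u-2\tau)_\pm$ and applying Lemma~\ref{lem3-2} with $\rho=2R$ and $r=4R$, I would keep only the cross term on the left-hand side and restrict the inner integration to $B_{2R}$. The bounds $|x-y|\le 4R$ and $a(x,y)\ge a^-_{4R}$ on $B_{2R}\times B_{2R}$ yield a lower estimate of the form $cL\bigl(I_p/(4R)^{n+sp}+a^-_{4R}I_q/(4R)^{n+tq}\bigr)$, where $L:=\int_{B_{2R}}w_-$, $I_p:=\int_{B_{2R}}w_+^{p-1}$, $I_q:=\int_{B_{2R}}w_+^{q-1}$. The elementary bounds $L\ge\tau\,|\{u\le\tau\}\cap B_{2R}|$ (since $w_-\ge\tau$ on $\{u\le\tau\}$) and, using $(u-2\tau)^{p-1}\ge 2^{1-p}u^{p-1}$ on $\{u\ge 4\tau\}$ together with the trivial-case reduction, $I_p\ge c(u^{p-1})_{B_{2R}}|B_{2R}|$ and $I_q\ge c(u^{q-1})_{B_{2R}}|B_{2R}|$, then follow immediately.

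For the right-hand side of Lemma~\ref{lem3-2}, I would control it by $CH_{4R}(\tau)|B_{4R}|$. The local part is immediate from $w_-\le 2\tau$ on $B_{4R}$, which uses $u\ge 0$ there. The nonlocal tail is the heaviest step: splitting $w_-(x)\le 2\tau+u_-(x)$ for $x\notin B_{4R}$, the $u_-$ pieces absorb into $h_{4R}(\tau)$ via the hypothesis \eqref{4-3-1}, while the $\tau^{q-1}$ contributions coming from the $q$-tail must be aligned with $h_{4R}(\tau)$ using the H\"older condition \eqref{a}, the range $\alpha<tq\le sp+\alpha$, $R\le 1$, and $\tau\le\|u\|_{L^\infty(B_{4R})}$ (so that powers $\tau^{q-p}$ are absorbed into a constant depending on $\mathbf{data}(B_{4R})$). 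This mirrors the tail bookkeeping in the proof of Lemma~\ref{lem4-1}, and I expect it to be the step demanding the most care.

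Combining both sides and using $H_{4R}(\tau)=\tau\,h_{4R}(\tau)$ produces
\[
\frac{|\{u\le\tau\}\cap B_{2R}|}{|B_{2R}|}\le C\,\frac{h_{4R}(\tau)}{(u^{p-1})_{B_{2R}}/(4R)^{sp}+a^-_{4R}(u^{q-1})_{B_{2R}}/(4R)^{tq}}.
\]
The desired \eqref{4-3-2} then drops out from the elementary inequality $(a_1+a_2)/(b_1+b_2)\le a_1/b_1+a_2/b_2$ (valid for $a_i\ge 0$ and $b_i>0$) applied to the $p$- and $q$-phase pieces of the numerator $h_{4R}(\tau)=\tau^{p-1}/(4R)^{sp}+a^-_{4R}\tau^{q-1}/(4R)^{tq}$ and of the denominator; the degenerate case $a^-_{4R}=0$ is handled by simply dropping the $q$-term throughout.
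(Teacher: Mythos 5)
Your proposal is correct and follows essentially the same skeleton as the paper's proof: apply Lemma~\ref{lem3-2} at level $k=2\tau$ with $\rho=2R$, $r=4R$, reduce to $\tau\le\|u\|_{L^\infty(B_{4R})}$, bound the right-hand side by $CH_{4R}(\tau)|B_{4R}|$ using the tail hypothesis \eqref{4-3-1} and the H\"older estimate on $a$, and lower-bound the cross-interaction term by $L\bigl(I_p/(4R)^{n+sp}+a^-_{4R}I_q/(4R)^{n+tq}\bigr)$ with $L\ge\tau|\{u\le\tau\}\cap B_{2R}|$. The only structural difference is at the very end: the paper splits into the two phases $\eqref{4-3-4}_1$/$\eqref{4-3-4}_2$ and, in each case, discards the non-dominant phase on both sides before concluding (noting that $\eqref{4-3-4}_2$ forces $a^-_{4R}>0$), whereas you keep both phases and close with the elementary fraction inequality $(a_1+a_2)/(b_1+b_2)\le a_1/b_1+a_2/b_2$. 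That inequality does hold (expand: $0\le a_1b_2^2+a_2b_1^2$), and your handling of the degenerate case $a^-_{4R}=0$ by dropping the $q$-term is exactly what the paper's dichotomy does implicitly, so this is a modest algebraic streamlining rather than a different method.
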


\begin{proof}
Set $w_\pm:=(u-2\tau)_\pm$ and suppose $\tau\le \|u\|_{L^\infty(B_{4R})}$ (Otherwise, \eqref{4-3-2} does hold true plainly by the boundedness of $u$ in $B_{4R}$) in this proof. According to Lemma \ref{lem3-2}, we get
\begin{align}
\label{4-3-3}
&\quad\int_{B_{2R}}w_-(x)\left(\int_{\mathbb{R}^n}\frac{w_+^{p-1}(y)}{|x-y|^{n+sp}}+a(x,y)\frac{w_+^{q-1}(y)}{|x-y|^{n+tq}}\,dy\right)\,dx \nonumber\\
&\le C\Bigg[\int_{B_{4R}}\frac{w_-^p(x)}{(4R)^{sp}}+a^-_{4R}\frac{w_-^q(x)}{(4R)^{tq}}\,dx \nonumber\\
&\quad+\int_{B_{4R}}w_-(y)\int_{\mathbb{R}^n\setminus B_{2R}}\frac{w_-^{p-1}(x)}{|x-x_0|^{n+sp}}+a(x,y)\frac{w_-^{q-1}(x)}{|x-x_0|^{n+tq}}\,dxdy\Bigg]  \nonumber\\
&\le C\left(\frac{\tau^p}{(4R)^{sp}}+a^-_{4R}\frac{\tau^q}{(4R)^{tq}}\right)|B_{4R}|+C\tau|B_{4R}|(\tau^{p-1}(2R)^{-sp}+\mathrm{Tail}(u_-;x_0,4R)) \nonumber\\
&\quad+C\tau|B_{4R}|[\tau^{q-1}(R^{\alpha-tq}+a^-_{4R}R^{-tq})+\mathrm{Tail}_a(u_-;x_0,4R)]  \nonumber\\
&\le CH_{4R}(\tau)|B_{4R}|,
\end{align}
where in the last line we have exploited \eqref{4-3-1}, $R^{\alpha-tq}\le R^{-sp}$ by $tq\le sp+\alpha$ and $\tau^q=\tau^{q-p}\tau^p\le \|u\|_{L^\infty(B_{4R})}^{q-p}\tau^p$ by $\tau\le\|u\|_{L^\infty(B_{4R})}$. Here for the specific evaluations on the integral
$$
\int_{B_{4R}}w_-(y)\int_{\mathbb{R}^n\setminus B_{2R}}\frac{w_-^{p-1}(x)}{|x-x_0|^{n+sp}}+a(x,y)\frac{w_-^{q-1}(x)}{|x-x_0|^{n+tq}}\,dxdy,
$$
one can refer to the integrals $J_1,J_2$ in \eqref{4-1-3}.

In what follows, let us identify two different options:
\begin{equation}
\label{4-3-4}
\left(\frac{\tau}{(4R)^{s}}\right)^p\ge a^-_{4R}\left(\frac{\tau}{(4R)^{t}}\right)^q  \quad\text{and} \quad
\left(\frac{\tau}{(4R)^{s}}\right)^p< a^-_{4R}\left(\frac{\tau}{(4R)^{t}}\right)^q.
\end{equation}

\medskip

\textbf{Case $\eqref{4-3-4}_1$}. It follows from \eqref{4-3-3} that
\begin{equation}
\label{4-3-5}
\int_{B_{2R}}w_-(x)\int_{B_{2R}}\frac{w_+^{p-1}(y)}{|x-y|^{n+sp}}\,dydx\le CH_{4R}(\tau)|B_{4R}|\le C\left(\frac{\tau}{(4R)^{s}}\right)^p|B_{4R}|.
\end{equation}
Now recall the basic inequality $u^{p-1}_+\le C(p)(u-\tau)^{p-1}_++C(p)\tau^{p-1}$ for $\tau>0$. For the left-hand side of \eqref{4-3-5}, we further derive
\begin{align}
\label{4-3-6}
&\quad\int_{B_{2R}}\int_{B_{2R}}\frac{w_-(x)w_+^{p-1}(y)}{|x-y|^{n+sp}}\,dydx  \nonumber\\
&\ge \int_{B_{2R}}\int_{B_{2R}}\frac{w_-(x)\left(\frac{1}{C}u^{p-1}(y)-\tau^{p-1}\right)}{(4R)^{n+sp}}\,dydx  \nonumber\\
&\ge \frac{1}{C(4R)^{sp}}\int_{B_{2R}}w_-(x)\,dx\mint_{B_{2R}}u^{p-1}(y)\,dy-C\left(\frac{\tau}{(4R)^{s}}\right)^p|B_{2R}|,
\end{align}
where in the last line we observe $w_-\le \tau$ for $x\in B_{2R}$ by $u\ge0$ in $B_{4R}$. Combining \eqref{4-3-5} and \eqref{4-3-6} leads to
$$
\int_{B_{2R}}w_-(x)\,dx\mint_{B_{2R}}u^{p-1}(y)\,dy\le C\tau^p|B_{2R}|.
$$
Now let us examine the lower bound on $\int_{B_{2R}}w_-(x)\,dx$,
\begin{equation}
\label{4-3-7}
\int_{B_{2R}}w_-(x)\,dx\ge\int_{B_{2R}\cap\{u\le\tau\}}(u-2\tau)_-(x)\,dx\ge \tau|B_{2R}\cap\{u\le\tau\}|.
\end{equation}
As a result, it holds that
$$
|B_{2R}\cap\{u\le\tau\}|\le C\tau^{p-1}|B_{2R}|\left(\mint_{B_{2R}}u^{p-1}\,dy\right)^{-1}.
$$

\textbf{Case $\eqref{4-3-4}_2$}. In this case, we know $a^-_{4R}>0$. Similar to Case $\eqref{4-3-4}_1$, based on \eqref{4-3-3} we evaluate
\begin{align*}
Ca^-_{4R}\left(\frac{\tau}{(4R)^{t}}\right)^q|B_{4R}|&\ge \int_{B_{2R}}\int_{B_{2R}}a(x,y)\frac{w_-(x)w_+^{q-1}(y)}{|x-y|^{n+tq}}\,dydx\\
&\ge a^-_{4R}\int_{B_{2R}}\int_{B_{2R}}\frac{w_-(x)w_+^{q-1}(y)}{|x-y|^{n+tq}}\,dydx\\
&\ge \frac{a^-_{4R}}{C(4R)^{tq}}\int_{B_{2R}}w_-(x)\,dx\mint_{B_{2R}}u^{q-1}(y)\,dy-C\frac{a^-_{4R}\tau^q}{(4R)^{tq}}|B_{2R}|.
\end{align*}
This inequality together with \eqref{4-3-7} indicates
$$
|B_{2R}\cap\{u\le\tau\}|\le C\tau^{q-1}|B_{2R}|\left(\mint_{B_{2R}}u^{q-1}\,dy\right)^{-1}.
$$
In view of the results in Case $\eqref{4-3-4}_1$ and Case $\eqref{4-3-4}_2$, the measure estimate \eqref{4-3-2} directly follows.
\end{proof}

We end this section by a measure shrinking lemma of diverse types with Lemma \ref{lem4-3}, which checks the effect of the long-range property of minimizers of \eqref{main} (i.e., nonlocal tails). However, what we have to mention is that the modulating coefficient $a(\cdot,\cdot)$ is supposed to carry a positive lower bound instead of $a(\cdot,\cdot)\ge0$. The reason is that we do not know how to estimate the nonlocal tail with weight, $\mathrm{Tail}_a$ in \eqref{4-4-4}, from below up to now, provided $a(\cdot,\cdot)$ is just nonnegative ($a$ may be zero at some points). Here one needs to observe the coefficient remains unbounded from above potentially!

\begin{lemma}
\label{lem4-4}
Let the preconditions that \eqref{a}, \eqref{thm1} and $a(\cdot,\cdot)\ge\lambda>0$ be satisfied. Suppose $u\in\mathcal{A}(\Omega)\cap L^{p-1}_{sp}(\mathbb{R}^n)\cap L^{q-1}_{a,tq}(\Omega,\mathbb{R}^n)$, nonnegative in a ball $B_{4R}:=B_{4R}(x_0)\subset \Omega$ with $R\le1$, is a minimizer of \eqref{main}. We enforce $u$ to be bounded in $B_{4R}$ provided $sp\le n$. Given $0\le \tau$, 
one can find a constant $C>1$, depending on $\textbf{data}(B_{4R})$ and $\lambda$, such that either
\begin{equation*}
\mathrm{Tail} (u_-;x_0,4R)+\mathrm{Tail}_a(u_-;x_0,4R)\ge h_{4R}(\tau)
\end{equation*}
or
\begin{equation*}
|\{u\le \tau\}\cap B_{2R}|\le C\left(\frac{\tau^{p-1}}{[\mathrm{Tail}_p(u_+;x_0,2R)]^{p-1}}+a^-_{4R}\frac{\tau^{q-1}}{[\mathrm{Tail}_q(u_+;x_0,2R)]^{q-1}}\right)|B_{2R}|,
\end{equation*}
where the definitions of $\mathrm{Tail}_p(u_+;x_0,2R)$ and $\mathrm{Tail}_q(u_+;x_0,2R)$ are identical to those in Theorem \ref{Thm3}.
\end{lemma}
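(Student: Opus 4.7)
I would follow the template of Lemma~\ref{lem4-3}, substituting the nonlocal tails of $u_+$ outside $B_{2R}$ for the local integrals of $u$ on $B_{2R}$ as the source of positivity. Assuming the first alternative fails, and---as in Lemma~\ref{lem4-3}---that $\tau\le\|u\|_{L^\infty(B_{4R})}$, I set $w_\pm:=(u-2\tau)_\pm$ and invoke Lemma~\ref{lem3-2} with $\rho=2R$, $r=4R$. Handling the tail terms of $w_-$ on the right-hand side exactly as in the derivation of \eqref{4-3-3}, I obtain
\begin{equation*}
\int_{B_{2R}}w_-(x)\left(\int_{\mathbb{R}^n}\frac{w_+^{p-1}(y)}{|x-y|^{n+sp}}+a(x,y)\frac{w_+^{q-1}(y)}{|x-y|^{n+tq}}\,dy\right)dx\le CH_{4R}(\tau)|B_{4R}|.
\end{equation*}

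The new ingredient is the lower bound on the left-hand side. I restrict the inner integrals to $\mathbb{R}^n\setminus B_{2R}$ and exploit $|x-y|\le 2|x_0-y|$ for $x\in B_{2R}$, $y\notin B_{2R}$, so that the kernels $|x-y|^{-(n+sp)}$ and $|x-y|^{-(n+tq)}$ become pointwise comparable to their centered analogues. The elementary inequality
\begin{equation*}
(u(y)-2\tau)_+^{p-1}\ge 2^{1-p}u_+^{p-1}(y)-(2\tau)^{p-1},
\end{equation*}
verified by a case split on whether $u(y)\le 4\tau$, together with its $q$-analogue, converts $w_+$ to $u_+$ at the price of errors of size $C\tau^{p-1}R^{-sp}$ and $C\tau^{q-1}R^{-tq}$. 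For the coefficient in the $q$-tail I would simply invoke the global lower bound $a(x,y)\ge\lambda$. Combined with the trivial estimate $\int_{B_{2R}}w_-\,dx\ge\tau|\{u\le\tau\}\cap B_{2R}|$ (since $w_-\ge\tau$ on $\{u\le\tau\}\cap B_{2R}$), this yields the two chained inequalities
\begin{align*}
c\tau|\{u\le\tau\}\cap B_{2R}|R^{-sp}\bigl([\mathrm{Tail}_p(u_+;x_0,2R)]^{p-1}-C\tau^{p-1}\bigr)&\le CH_{4R}(\tau)|B_{4R}|, \\
c\lambda\tau|\{u\le\tau\}\cap B_{2R}|R^{-tq}\bigl([\mathrm{Tail}_q(u_+;x_0,2R)]^{q-1}-C\tau^{q-1}\bigr)&\le CH_{4R}(\tau)|B_{4R}|.
\end{align*}

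Now I would dichotomize exactly as in Lemma~\ref{lem4-3}: if $(\tau/R^s)^p\ge a^-_{4R}(\tau/R^t)^q$, then $H_{4R}(\tau)\le 2\tau^pR^{-sp}$ and the first inequality yields $|\{u\le\tau\}\cap B_{2R}|\le C\tau^{p-1}[\mathrm{Tail}_p(u_+;x_0,2R)]^{-(p-1)}|B_{2R}|$; in the opposite case, $H_{4R}(\tau)\le 2a^-_{4R}\tau^qR^{-tq}$ and the second inequality produces $|\{u\le\tau\}\cap B_{2R}|\le C(a^-_{4R}/\lambda)\tau^{q-1}[\mathrm{Tail}_q(u_+;x_0,2R)]^{-(q-1)}|B_{2R}|$, with $1/\lambda$ absorbed into $C$. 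The error terms $C\tau^{p-1}$ and $C\tau^{q-1}$ inside the brackets are dealt with by the standard trivial-versus-nontrivial dichotomy: either $[\mathrm{Tail}_p]^{p-1}\ge 2C\tau^{p-1}$, in which case the error is absorbed into the main term, or the claimed bound holds automatically because $\tau^{p-1}/[\mathrm{Tail}_p]^{p-1}$ is bounded below. Since each case delivers one of the two summands in the conclusion while the other summand is nonnegative, summing yields the asserted inequality.

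The principal obstacle is the asymmetric role of the coefficient $a$: the right-hand side of the Caccioppoli estimate carries $a^-_{4R}=\inf_{B_{4R}\times B_{4R}}a$, whereas the lower bound on the $q$-tail must accommodate $y\in\mathbb{R}^n\setminus B_{4R}$, where no such infimum over $B_{4R}\times B_{4R}$ controls $a(x,y)$ from below. This mismatch is precisely why the stronger hypothesis $a\ge\lambda>0$ is imposed and why $C$ is allowed to depend on $\lambda$; by contrast, the explicit factor $a^-_{4R}$ appearing in the conclusion is \emph{not} produced by a pointwise lower bound on $a$, but arises naturally from the Caccioppoli right-hand side in Case~2 of the dichotomy.
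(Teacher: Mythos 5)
Your proposal is correct and follows the same overall architecture as the paper's proof: Caccioppoli estimate via Lemma~\ref{lem3-2} to obtain \eqref{4-4-1}, the dichotomy \eqref{4-3-4} driven by which phase dominates $H_{4R}(\tau)$, the kernel comparison $|x-y|\le 2|y-x_0|$, and $\int_{B_{2R}}w_-\,dx\ge\tau|\{u\le\tau\}\cap B_{2R}|$ to extract the measure bound. Your closing observation about the asymmetric role of $a$ is exactly the remark the authors make after \eqref{4-4-4}, and you correctly identify $a^-_{4R}$ in the conclusion as coming from the Caccioppoli right-hand side, not from any pointwise lower bound.

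The one place you diverge from the paper is the treatment of the error term in Case~$\eqref{4-3-4}_2$, and your variant is arguably cleaner. The paper first performs the $w_+^{q-1}\to u_+^{q-1}$ conversion while keeping $a(x,y)$ in the integrand, producing the error
$I_2=\frac{\tau^{q-1}}{C}\int_{B_{2R}}w_-\int_{\mathbb{R}^n\setminus B_{2R}}\frac{a(x,y)}{|y-x_0|^{n+tq}}\,dy\,dx$,
whose estimation requires the H\"{o}lder decomposition of $a$ (as in \eqref{4-4-5}), the condition $\alpha<tq$ to make $\int_{\mathbb{R}^n\setminus B_{2R}}|y-x_0|^{-(n+tq-\alpha)}\,dy$ converge, and $\lambda\le a^-_{4R}$ to write $a^-_{4R}+1\le a^-_{4R}(\lambda^{-1}+1)$, so that $I_2$ can be absorbed directly into $CH_{4R}(\tau)|B_{4R}|$. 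You instead apply $a(x,y)\ge\lambda$ to the entire integrand \emph{before} converting $w_+$ to $u_+$, which replaces $a(x,y)$ by the constant $\lambda$ in the error and eliminates the weighted tail integral entirely; the error integral then reduces to the elementary $\int_{\mathbb{R}^n\setminus B_{2R}}|y-x_0|^{-(n+tq)}\,dy\approx R^{-tq}$. The price is that your bookkeeping keeps the error factored with $\int w_-$, so you must invoke the extra trivial-versus-nontrivial dichotomy on $[\mathrm{Tail}_q]^{q-1}$ against $\tau^{q-1}$ (with the constant in the trivial branch chosen of order $1/\lambda$). Alternatively, bounding the error's $\int w_-$ from above by $2\tau|B_{2R}|$ and using $\lambda\le a^-_{4R}$ would let it absorb into the $CH_{4R}(\tau)|B_{4R}|$ side exactly as in the paper, and you would avoid the dichotomy altogether. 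Either way, the argument is sound and the dependence of $C$ on $\lambda$ is produced correctly.
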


\begin{proof}
Set $w_\pm:=(u-2\tau)_\pm$. We rewrite \eqref{4-3-3} as
\begin{equation}
\label{4-4-1}
\int_{B_{2R}}w_-(x)\left(\int_{\mathbb{R}^n}\frac{w_+^{p-1}(y)}{|x-y|^{n+sp}}+a(x,y)\frac{w_+^{q-1}(y)}{|x-y|^{n+tq}}\,dy\right)\,dx\le C H_{4R}(\tau)|B_{4R}|.
\end{equation}
Next, we shall differentiate two diverse scenarios in \eqref{4-3-4} as well.

\medskip

\textbf{Case $\eqref{4-3-4}_1$}. When $|x-x_0|\le2R$ and $|y-x_0|\ge2R$, we have $|x-y|\le2|y-x_0|$. Due to \eqref{4-4-1}, one obtains
\begin{align*}
C\frac{\tau^p}{(4R)^{sp}}|B_{4R}|&\ge C H_{4R}(\tau)|B_{4R}|\\
&\ge\int_{B_{2R}}w_-(x)\int_{\mathbb{R}^n}\frac{w_+^{p-1}(y)}{|x-y|^{n+sp}}\,dydx\\
&\ge\int_{B_{2R}}w_-(x)\int_{\mathbb{R}^n\setminus B_{2R}}\frac{\frac{1}{C}u_+^{p-1}(y)-\tau^{p-1}}{(2|y-x_0|)^{n+sp}}\,dydx\\
&\ge\frac{1}{C}\int_{B_{2R}}w_-(x)\,dx\cdot\int_{\mathbb{R}^n\setminus B_{2R}}\frac{u_+^{p-1}(y)}{|y-x_0|^{n+sp}}\,dy-C\frac{\tau^p}{(4R)^{sp}}|B_{2R}|,
\end{align*}
i.e.,
$$
\mathrm{Tail}(u_+;x_0,2R)\int_{B_{2R}}w_-(x)\,dx\le C\frac{\tau^p}{(4R)^{sp}}|B_{2R}|.
$$
Finally, it follows from the inequality \eqref{4-3-7} that
\begin{equation}
\label{4-4-2}
|B_{2R}\cap\{u\le\tau\}|\le\frac{C\tau^{p-1}}{(2R)^{sp}\mathrm{Tail}(u_+;x_0,2R)}|B_{2R}|.
\end{equation}

\textbf{Case $\eqref{4-3-4}_2$}. In an analogous manner to Case $\eqref{4-3-4}_1$ in this proof, we evaluate
\begin{align}
\label{4-4-3}
C a^-_{4R}\frac{\tau^q}{(4R)^{tq}}|B_{4R}|&\ge \int_{B_{2R}}w_-(x)\int_{\mathbb{R}^n}a(x,y)\frac{w_+^{q-1}(y)}{|x-y|^{n+tq}}\,dydx \nonumber\\
&\ge \int_{B_{2R}}w_-(x)\int_{\mathbb{R}^n\setminus B_{2R}}a(x,y)\frac{\frac{1}{C}u_+^{q-1}(y)-\tau^{q-1}}{(2|y-x_0|)^{n+tq}}\,dydx \nonumber\\
&=\frac{1}{C}\int_{B_{2R}}w_-(x)\int_{\mathbb{R}^n\setminus B_{2R}}a(x,y)\frac{u_+^{q-1}(y)}{|y-x_0|^{n+tq}}\,dydx \nonumber\\
&\quad-\frac{\tau^{q-1}}{C}\int_{B_{2R}}w_-(x)\int_{\mathbb{R}^n\setminus B_{2R}}\frac{a(x,y)}{|y-x_0|^{n+tq}}\,dydx \nonumber\\
&=:I_1-I_2.
\end{align}
Since $a(x,y)\ge\lambda$ in $\mathbb{R}^n\times\mathbb{R}^n$, we readily discover
\begin{equation}
\label{4-4-4}
I_1\ge \frac{\lambda}{C}\int_{B_{2R}}w_-(x)\,dx\int_{\mathbb{R}^n\setminus B_{2R}}\frac{u_+^{q-1}(y)}{|y-x_0|^{n+tq}}\,dy.
\end{equation}
\emph{Here, let us remark that if $a(x,y)\ge0$ instead of $a(x,y)\ge\lambda$, then we do not know how to find a proper nonlocal integral bounding $I_1$ from below so that we can divide the integrals with respect to $x$ and $y$.} Now we focus on dealing with $I_2$. For $x\in B_{2R}$ and $y\in\mathbb{R}^n\setminus B_{2R}$, by means of the H\"{o}lder continuity of $a(\cdot,\cdot)$, we can see
\begin{align*}
a(x,y)&=a(x,y)-a(x,x)+a(x,x)-a^-_{4R}+a^-_{4R}\\
&\le [a]_\alpha|x-y|^\alpha+a(x,x)-a^-_{4R}+a^-_{4R}\\
&\le 2[a]_\alpha|y-x_0|^\alpha+\sup_{B_{4R}\times B_{4R}}|a(x,y)-a(x',y')|+a^-_{4R}\\
&\le2[a]_\alpha|y-x_0|^\alpha+C[a]_\alpha R^\alpha+a^-_{4R}.
\end{align*}
Therefore, by virtue of $R\le1$ and $\lambda\le a^-_{4R}$,
\begin{align}
\label{4-4-5}
I_2&\le \frac{\tau^{q-1}}{C}\int_{B_{2R}}w_-(x)\,dx\int_{\mathbb{R}^n\setminus B_{2R}}\frac{2[a]_\alpha|y-x_0|^\alpha+C[a]_\alpha R^\alpha+a^-_{4R}}{|y-x_0|^{n+tq}}\,dy \nonumber\\
&\le\frac{\tau^{q}}{C}(R^{\alpha-tq}+a^-_{4R}R^{-tq})|B_{2R}| \nonumber\\
&\le \frac{a^-_{4R}+1}{C}\frac{\tau^q}{R^{tq}}|B_{2R}| \nonumber\\
&\le a^-_{4R}\frac{\lambda^{-1}+1}{C}\frac{\tau^q}{(4R)^{tq}}|B_{2R}|.
\end{align}
Combining \eqref{4-4-4} and \eqref{4-4-5} with \eqref{4-4-3} yields that
$$
\int_{B_{2R}}w_-(x)\,dx\int_{\mathbb{R}^n\setminus B_{2R}}\frac{u_+^{q-1}(y)}{|y-x_0|^{n+tq}}\,dy\le Ca^-_{4R}\frac{\tau^q}{(4R)^{tq}}|B_{2R}|.
$$
Via \eqref{4-3-7}, we arrive at
\begin{equation}
\label{4-4-6}
|\{u\le \tau\}\cap B_{2R}|\le Ca^-_{4R}\frac{\tau^{q-1}}{(2R)^{tq}\mathrm{Tail}_q(u_+;x_0,2R)}|B_{2R}|,
\end{equation}
where the constant $C>1$ depends on $\textbf{data}(B_{4R})$ and $\lambda$. As a consequence, we immediately deduce the desired measure estimate from \eqref{4-4-2} and \eqref{4-4-6}.
\end{proof}

\section{Proof of weak Harnack inequalities}
\label{sec5}

In this section, we complete in turn the proof of three weak Harnack estimates, Theorems \ref{Thm1}--\ref{Thm3}.

\medskip
\noindent{\textbf{Proof of Theorem \ref{Thm1}}}.
Let $B_{4R}(x_0)\subset\Omega$ with $R\le1$, and the minimizer $u$ be nonnegative and bounded in $B_{4R}(x_0)$. We will neglect the center $x_0$ in this proof. We assume $u\not\equiv0$ in $B_R$ or else there is nothing to prove. Let $\delta\in\left(0,\frac{1}{8}\right]$ be the parameter provided by Corollary \ref{cor4-2-2} with $\nu=\frac{1}{2}$. Set
$$
\beta:=\frac{1}{2\log_\frac{1}{2}\delta}\in(0,1).
$$
First, we claim that for $\tau\ge0$ there holds
\begin{equation}
\label{5-1-1}
\inf_{B_R(x_0)}u+h_{4R}^{-1}\big(\mathrm{Tail}(u_-;x_0,4R)+\mathrm{Tail}_a(u_-;x_0,4R)\big)
\ge\delta\left(\frac{|E^+(\tau,R)|}{|B_R|}\right)^\frac{1}{\beta}\tau,
\end{equation}
where the set $E^+(\tau,R):=B_R(x_0)\cap\{u>\tau\}$. Owing to the definition of $E^+(\tau,R)$, the case $\tau\ge\sup_{B_R}u$ is obvious since $u$ is locally bounded in $\Omega$. Then we only need to show this assertion for $\tau\in\left[0,\sup_{B_R}u\right)$. Now given a $\tau\in\left[0,\sup_{B_R}u\right)$, let $k$ be the smallest integer such that
\begin{equation}
\label{5-1-2}
|E^+(\tau,R)|\ge2^{-k}|B_R|,
\end{equation}
that is,
$$
\log _{\frac{1}{2}} \frac{|E^+(\tau,R)|}{|B_R|}
\leq k<1+\log _{\frac{1}{2}} \frac{|E^+(\tau,R)|}{|B_R|} .
$$
We further have
\begin{equation}
\label{5-1-3}
\delta^k\ge\delta\left(\frac{|E^+(\tau,R)|}{|B_R|}\right)^\frac{1}{\beta}.
\end{equation}
Due to $u\ge0$ in $B_{4R}$, \eqref{5-1-1} directly holds true provided
\begin{equation}
\label{5-1-4}
\mathrm{Tail}(u_-;x_0,4R)+\mathrm{Tail}_a(u_-;x_0,4R)\ge h_{4R}(\delta^k\tau).
\end{equation}
Otherwise, if \eqref{5-1-4} is false, then we combine \eqref{5-1-2} with the converse of \eqref{5-1-4} and employ Corollary \ref{cor4-2-2} to deduce
$$
u\ge\delta^k\tau  \quad\text{in }   B_R.
$$
All in all, we have
$$
\inf_{B_R}u+h_{4R}^{-1}\big(\mathrm{Tail}(u_-;x_0,4R)+\mathrm{Tail}_a(u_-;x_0,4R)\big)\ge\delta^k\tau,
$$
and from \eqref{5-1-3} get the inequality \eqref{5-1-1}.

Next, we denote
$$
L:=\inf_{B_R}u+h_{4R}^{-1}\big(\mathrm{Tail}(u_-;x_0,4R)+\mathrm{Tail}_a(u_-;x_0,4R)\big)
$$
and rearrange \eqref{5-1-1} as
$$
\left(\frac{L}{\delta\tau}\right)^\beta\ge\frac{|E^+(\tau,R)|}{|B_R|}.
$$
Using the last display and Cavalieri's principle, we calculate
\begin{align*}
\mint_{B_R}u^\frac{\beta}{2}(x)\,dx&=\frac{\beta}{2}\int^\infty_0\tau^{\frac{\beta}{2}-1}\frac{|E^+(\tau,R)|}{|B_R|}\,d\tau\\
&\le\frac{\beta}{2}\left[\int^L_0\tau^{\frac{\beta}{2}-1}\,d\tau+\left(\frac{L}{\delta}\right)^\beta\int^\infty_L\tau^{\frac{\beta}{2}-1-\beta}\,d\tau\right]\\
&\le (1+\delta^{-\beta})L^\frac{\beta}{2}.
\end{align*}
Therefore, we have finished the proof of Theorem \ref{Thm1}. \hfill $\Box$

\bigskip

\noindent{\textbf{Proof of Theorem \ref{Thm2}}}.
Let $B_{4R}(x_0)\subset\Omega$ with $R\le1$, and the minimizer $u$ be nonnegative and bounded in $B_{4R}(x_0)$. We also omit the center $x_0$ later. Let the parameter $\nu\in(0,1)$ be from Corollary \ref{cor4-2-1}. Moreover, according to Corollary \ref{cor4-2-1} with the constant $C\ge1$ determined there, we select $\tau>0$ to satisfy
$$
C\left(\frac{\tau^{p-1}}{(u^{p-1})_{B_{2R}}}+\frac{\tau^{q-1}}{(u^{q-1})_{B_{2R}}}\right)\le\nu.
$$
Then we further choose such $\tau>0$ that
$$
\frac{\tau^{p-1}}{(u^{p-1})_{B_{2R}}}\le\frac{\nu}{2C} \quad\text{and}\quad \frac{\tau^{q-1}}{(u^{q-1})_{B_{2R}}}\le\frac{\nu}{2C},
$$
namely,
\begin{equation}
\label{5-2-1}
\tau\le\left(\frac{\nu}{2C}(u^{p-1})_{B_{2R}}\right)^\frac{1}{p-1}   \quad\text{and}\quad \tau\le\left(\frac{\nu}{2C}(u^{q-1})_{B_{2R}}\right)^\frac{1}{q-1}.
\end{equation}
Thus we take
\begin{align*}
\tau=\left(\frac{\nu}{2C}\right)^\frac{1}{p-1}\min\left\{(u^{p-1})_{B_{2R}}^\frac{1}{p-1},(u^{q-1})_{B_{2R}}^\frac{1}{q-1}\right\}
\end{align*}
to fulfill \eqref{5-2-1}. At this moment, through applying Corollary \ref{cor4-2-1}, we draw a conclusion that either
$$
\mathrm{Tail}(u_-;x_0,4R)+\mathrm{Tail}_a(u_-;x_0,4R)\ge h_{4R}(\tau)
$$
or
$$
u\ge\frac{1}{2}\tau \quad\text{in } B_R.
$$
As a consequence, it holds that
$$
\inf_{B_R}u+h_{4R}^{-1}\big(\mathrm{Tail}(u_-;x_0,4R)+\mathrm{Tail}_a(u_-;x_0,4R)\big)\ge\frac{3}{2}\tau.
$$
Finally, the choice of $\tau$ and this inequality directly imply the desired result. \hfill $\Box$

\bigskip

\noindent{\textbf{Proof of Theorem \ref{Thm3}}}.
Let $B_{4R}(x_0)\subset\Omega$ with $R\le1$, and the minimizer $u$ be nonnegative and bounded in $B_{4R}(x_0)$. Let the number $\nu\in(0,1)$ come from Corollary \ref{cor4-2-1}. Now pick a suitable $\tau>0$ such that
\begin{equation}
\label{5-3-1}
C\left(\frac{\tau^{p-1}}{[\mathrm{Tail}_p(u_+;x_0,2R)]^{p-1}}
+a^-_{4R}\frac{\tau^{q-1}}{[\mathrm{Tail}_q(u_+;x_0,2R)]^{q-1}}\right)\le\nu    
\end{equation}
with the constant $C\ge1$ fixed in Lemma \ref{lem4-4}.

We next select $\tau>0$ so small that
$$
\frac{\tau^{p-1}}{[\mathrm{Tail}_p(u_+;x_0,2R)]^{p-1}}\le\frac{\nu}{2C} \quad\text{and}\quad a^-_{4R}\frac{\tau^{q-1}}{[\mathrm{Tail}_q(u_+;x_0,2R)]^{q-1}}\le\frac{\nu}{2C}, 
$$
i.e.,
$$
\tau\le\left(\frac{\nu}{2C}\right)^\frac{1}{p-1}\mathrm{Tail}_p(u_+;x_0,2R)   \quad\text{and}\quad \tau\le\left(\frac{\nu}{2Ca^-_{4R}}\right)^\frac{1}{q-1}\mathrm{Tail}_q(u_+;x_0,2R).
$$
According to the condition \eqref{5-3-1} and Corollary \ref{cor4-2-1}, we deduce that either
$$
\mathrm{Tail}(u_-;x_0,4R)+\mathrm{Tail}_a(u_-;x_0,4R)\ge h_{4R}(\tau)
$$
or
$$
u\ge\frac{1}{2}\tau \quad\text{in } B_R(x_0).
$$
By taking
\begin{align*}
\tau=\left(\frac{\nu}{2C}\right)^\frac{1}{p-1}\min\left\{\mathrm{Tail}_p(u_+;x_0,2R),
\left(\frac{1}{a^-_{4R}}\right)^\frac{1}{q-1}\mathrm{Tail}_q(u_+;x_0,2R)\right\},
\end{align*}
the eventual estimate in Theorem \ref{Thm3} follows readily. \hfill $\Box$

\section{Boundedness on minimizers}
\label{sec6}

In the terminal section, we are ready to demonstrate local boundedness (Theorem \ref{Thm4}) on the minimizers of \eqref{main}, and subsequently present further the local sup-estimate. To this aim, we first give some notations. Let $B_r(x_0)\subset\Omega$ be any ball with $0<r\le1$. Throughout this section, set separately sequences of radii, of balls, of levels and of truncation functions
$$
r_i=\frac{r}{2}+\frac{r}{2^{i+1}}, \quad  B_i:=B_{r_i}(x_0), \quad   k_i=2(1-2^{-i-1})\overline{k}  \quad \text{and} \quad  w_i:=(u-k_i)_+
$$
with $\overline{k}>0$, and note
$$
r_i-r_{i+1}=2^{-i-2}r, \quad k_{i+1}-k_i=2^{-i}\overline{k} \quad \text{and} \quad   w_{i+1}\le w_i\le u_+
$$
for $i=0,1,2,\cdots$.

The proof of local boundedness on the minimizer $u$ of \eqref{main} is indeed similar to that of \cite[Theorem 1.1]{BOS22}, but we still provide the inferring processes because of weakening the condition $a\in L^\infty(\mathbb{R}^n\times\mathbb{R}^n)$ in \cite[Theorem 1.1]{BOS22} as $a\in L^\infty_{\rm loc}(\Omega\times\Omega)$ here.

\medskip

\noindent{\textbf{Proof of Theorem \ref{Thm4}}}.
Define
$$
a^+_r:=\sup_{B_r(x_0)\times B_r(x_0)}a(x,y) \quad\text{and}\quad  H^+_r(\tau):=\tau^p+a^+_r\tau^q.
$$
We now apply Lemma \ref{lem3-1} with $\rho:=r_{i+1}$, $r:=r_i$ and $w_+:=w_{i+1}$ to get
\begin{align}
\label{6-1}
&\quad\mint_{B_{i+1}}\int_{B_{i+1}}\frac{|w_{i+1}(x)-w_{i+1}(y)|^p}{|x-y|^{n+sp}}\,dxdy  \nonumber\\
&\leq C\Bigg[\frac{2^{iq}}{r^{tq}}\mint_{B_i}(w_{i+1}^p+a^+_rw_{i+1}^q)\,dx
+2^{(n+tq)i}\mint_{B_i}w_{i+1}\,dx  \nonumber\\
&\qquad\cdot\left(\int_{\mathbb{R}^n\setminus B_{\frac{r}{2}}}\frac{u^{p-1}_+(x)}{|x-x_0|^{n+sp}}\,dx+\sup_{y\in B_r}
\int_{\mathbb{R}^n\setminus B_{\frac{r}{2}}}a(x,y)\frac{u^{q-1}_+(x)}{|x-x_0|^{n+tq}}\,dx\right)\Bigg]    \nonumber\\
&\leq C\Bigg[\frac{2^{iq}}{r^{tq}}\mint_{B_i}(w_{i+1}^p+a^+_rw_{i+1}^q)\,dx+2^{(n+tq+p)i}\frac{T\Big(u_+;x_0,\frac{r}{2}\Big)}{\overline{k}^{p-1}}
\mint_{B_i}w_{i}^p\,dx\Bigg]   \nonumber\\
&\le C2^{(n+2q)i}\left(\frac{1}{r^{tq}}+\frac{T\Big(u_+;x_0,\frac{r}{2}\Big)}{\overline{k}^{p-1}}\right) \mint_{B_i} H^+_r(w_i)\,dx,
\end{align}
where the notation $T\Big(u_+;x_0,\frac{r}{2}\Big)$ is given as \eqref{6-1-2}. Here in the penultimate inequality, we have used
$$
\int_{B_i}w_{i+1}\,dx\le\int_{B_i}(u-k_i)_+\left(\frac{(u-k_i)_+}{k_{i+1}-k_i}\right)^{p-1}\,dx\le\frac{2^{ip}}{\overline{k}^{p-1}}\int_{B_i}w_i^p\,dx.
$$
It follows immediately from Lemma \ref{lem2-2} that
\begin{align}
\label{6-2}
&\quad r_{i+1}^{-sp}\mint_{B_{i+1}}H^+_r(w_{i+1})\,dx \nonumber\\
&\le\mint_{B_{i+1}}\left(\frac{w_{i+1}}{r_{i+1}^s}\right)^p+a^+_r\left(\frac{w_{i+1}}{r_{i+1}^t}\right)^q\,dx  \nonumber\\
&\leq Cr_{i+1}^{(s-t)q}a^+_r\left(\mint_{B_{i+1}}\int_{B_{i+1}}\frac{|w_{i+1}(x)-w_{i+1}(y)|^p}{|x-y|^{n+sp}}\,dxdy\right)^\frac{q}{p}  \nonumber\\
&\quad+C\left(\frac{|{\rm supp}\,w_{i+1}|}{|B_{i+1}|}\right)^\frac{sp}{n}\mint_{B_{i+1}}\int_{B_{i+1}}\frac{|w_{i+1}(x)-w_{i+1}(y)|^p}{|x-y|^{n+sp}}\,dxdy    \nonumber\\
&\quad+C\left(\frac{|{\rm supp}\,w_{i+1}|}{|B_{i+1}|}\right)^{p-1}\mint_{B_{i+1}}\left(\frac{w_{i+1}}{r_{i+1}^s}\right)^p+a^+_r\left(\frac{w_{i+1}}{r_{i+1}^t}\right)^q\,dx.
\end{align}
In view of \eqref{6-1} and
$$
|{\rm supp}\,w_{i+1}|\le \int_{B_{i+1}\cap\{u\ge k_{i+1}\}}\frac{(u-k_i)_+^p}{(k_{i+1}-k_i)^p}\,dx\le \frac{2^{ip}}{\overline{k}^{p}}\int_{B_i}w_i^p\,dx,
$$
the display \eqref{6-2} turns into
\begin{align}
\label{6-3}
\mint_{B_{i+1}}H^+_r(w_{i+1})\,dx&\le C2^\frac{q(n+2q)i}{p}r^{sp+(s-t)q}\left(\frac{1}{r^{tq}}+\frac{T\Big(u_+;x_0,\frac{r}{2}\Big)}{\overline{k}^{p-1}}\right)^\frac{q}{p}
\left(\mint_{B_i}H^+_r(w_i)\,dx\right)^\frac{q}{p}  \nonumber\\
&\quad+C2^{\left(n+tq+\frac{sp^2}{n}\right)i}\frac{1+r^{tq}T\Big(u_+;x_0,\frac{r}{2}\Big)/{\overline{k}^{p-1}}}{r^{tq-sp}\overline{k}^\frac{sp^2}{n}}
\left(\mint_{B_i}H^+_r(w_i)\,dx\right)^{1+\frac{sp}{n}} \nonumber\\
&\quad+C2^{ip^2}\frac{r^{sp-tq}}{\overline{k}^{p(p-1)}}\left(\mint_{B_i}H^+_r(w_i)\,dx\right)^p.
\end{align}

Next, let us define
$$
Y_i=\mint_{B_i}H^+_r(w_i)\,dx.
$$
Due to $H_r(u)\in L^1(\Omega)$ from \eqref{thm4}, we see that
$$
Y_0=\mint_{B_r}H^+_r((u-k_0)_+)\,dx\rightarrow0  \quad \text{as } k_0\rightarrow\infty.
$$
Thereby, we can first take large $\overline{k}\ge[T(u_+;x_0,r/2)]^\frac{1}{p-1}+1$ to meet
$$
Y_i\le Y_{i-1}\le \cdots\le Y_1\le Y_0\le1
$$
for $i=1,2,3,\cdots$. At this point, \eqref{6-3} becomes
\begin{align*}
Y_{i+1}&\le C\left(2^\frac{q(n+2q)i}{p}Y_i^\frac{q}{p}+2^{\left(n+tq+\frac{sp^2}{n}\right)i}Y_i^{\left(1+\frac{sp}{n}\right)}+2^{ip^2}Y_i^p\right)\\
&\le C2^{i\theta}Y_i^{1+\sigma},
\end{align*}
where
$$
\theta=\max\left\{\frac{q(n+2q)}{p},n+tq+\frac{sp^2}{n},p^2\right\}  \quad\text{and} \quad \sigma=\min\left\{\frac{q}{p}-1,\frac{sp}{n},p-1\right\},
$$
and $C>1$ depends on $n, p, q, s, t, \Lambda, \|a\|_{L^\infty(B_r)}, r$ and $T\Big(u_+; x_0, \frac{r}{2}\Big)$. In the end, we choose again such large $\overline{k}\ge[T(u_+; x_0, r/2)]^\frac{1}{p-1}+1$ that
$$
Y_0\le C^{-\frac{1}{\sigma}}2^{-\frac{\theta}{\sigma^2}},
$$
and further invoke a geometric convergence lemma, Lemma \ref{lem-2-3}, to derive $Y_i\rightarrow0$ as $i\rightarrow\infty$, which indicates the local boundedness from above. In a similar way to the previous processes for $-u$, we will get the boundedness from below and then have $u\in L^\infty(B_{r/2})$.  \hfill $\Box$

\medskip

At the end of this manuscript, we are going to justify the supremum estimate, of independent interest, on the minimizer of \eqref{main}, Theorem \ref{Thm5}, which possesses the potential application for exploring Harnack inequality. 

\medskip

\noindent\textbf{Proof of Theorem \ref{Thm5}}.
Let $B_r:=B_r(x_0)$ later. Recalling $\frac{r}{2}\le r_{i+1}<r_i\le r$ and $w_{i+1}\le w_i\le u_+$, and exploiting Lemma \ref{lem3-2} with $\rho:=r_{i+1}$, $r:=r_i$ and $w_+:=w_{i+1}$, we arrive at
\begin{align}
\label{t5-1}
&\quad\mint_{B_{i+1}}\int_{B_{i+1}}\frac{|w_{i+1}(x)-w_{i+1}(y)|^p}{|x-y|^{n+sp}}+a_r^-\frac{|w_{i+1}(x)-w_{i+1}(y)|^q}{|x-y|^{n+tq}}\,dxdy  \nonumber\\
&\leq C2^{i(n+q)}\Bigg[\mint_{B_{i}}\left(\frac{w_{i+1}^p}{r^{sp}_i}+a_r^-\frac{w_{i+1}^q}{r^{tq}_i}\right)dx+\mint_{B_i}w_{i+1}\,dx  \nonumber\\
&\qquad\qquad\quad\cdot\left(\int_{\mathbb{R}^n\setminus B_{\frac{r}{2}}}\frac{u^{p-1}_+(x)}{|x-x_0|^{n+sp}}\,dx+\sup_{y\in B_r}
\int_{\mathbb{R}^n\setminus B_{\frac{r}{2}}}a(x,y)\frac{u^{q-1}_+(x)}{|x-x_0|^{n+tq}}\,dx\right)\Bigg]    \nonumber\\
&\leq C2^{i(n+q)}\left[\mint_{B_{i}}H_r(w_i)\,dx+\frac{T\Big(u_+;x_0,\frac{r}{2}\Big)}{h_r(k_{i+1}-k_i)}\mint_{B_{i}}H_r(w_i)\,dx\right],
\end{align}
where we have used
$$
\int_{B_i}w_{i+1}\,dx\le \int_{B_i}w_{i+1}\frac{h_r((u-k_i)_+)}{h_r(k_{i+1}-k_i)}\,dx\le\int_{B_i}\frac{H_r(w_i)}{h_r(k_{i+1}-k_i)}\,dx.
$$

Next, let the number $\beta>1$ be defined as in \eqref{4-2-3} and $\beta'=\frac{\beta}{\beta-1}$. Let us first consider the case $a^-_r>0$. At this moment, we know that $u$ belongs to $W^{s,p}(B_r)\cap W^{t,q}(B_r)$. We apply the fractional Sobolev embedding theorem to discover
\begin{align}
\label{t5-2}
&\quad\mint_{B_{i+1}}H_r(w_{i+1})\,dx \nonumber\\
&\leq \left(\mint_{B_{i+1}}H_r^\beta(w_{i+1})\,dx\right)^\frac{1}{\beta}\left(\frac{|E^+(k_{i+1},r_{i+1})|}{|B_{i+1}|}\right)^\frac{1}{\beta'}  \nonumber\\
&\leq \left[\frac{1}{r^{sp}}\left(\mint_{B_{i+1}}w_{i+1}^{p\beta}\,dx\right)^\frac{1}{\beta}+\frac{a^-_r}{r^{tq}}\left(\mint_{B_{i+1}}w_{i+1}^{q\beta}\,dx\right)^\frac{1}{\beta}\right]
\left(\frac{|E^+(k_{i+1},r_{i+1})|}{|B_{i+1}|}\right)^\frac{1}{\beta'} \nonumber\\
&\le C\Bigg[\mint_{B_{i+1}}\int_{B_{i+1}}\frac{|w_{i+1}(x)-w_{i+1}(y)|^p}{|x-y|^{n+sp}}+a_r^-\frac{|w_{i+1}(x)-w_{i+1}(y)|^q}{|x-y|^{n+tq}}\,dxdy \nonumber\\
&\quad+\mint_{B_{i+1}}\left(\frac{w_{i+1}^p}{r^{sp}}+a_r^-\frac{w_{i+1}^q}{r^{tq}}\right)dx\Bigg]\left(\frac{|E^+(k_{i+1},r_{i+1})|}{|B_{i+1}|}\right)^\frac{1}{\beta'}
\end{align}
with $E^+(k_{i+1},r_{i+1})=\{u>k_{i+1}\}\cap B_{i+1}$. Observe that
\begin{equation}
\label{t5-3}
|E^+(k_{i+1},r_{i+1})|\le \int_{B_i}\frac{H_r(w_i)}{H_r(k_{i+1}-k_i)}\,dx.
\end{equation}
Thanks to the displays \eqref{t5-1}--\eqref{t5-3},
\begin{align*}
\mint_{B_{i+1}}H_r(w_{i+1})\,dx&\leq C\frac{2^{i(n+q)}}{H_r^\frac{1}{\beta'}(k_{i+1}-k_i)}\left(1+\frac{T\Big(u_+;x_0,\frac{r}{2}\Big)}{h_r(k_{i+1}-k_i)}\right)
\left(\mint_{B_{i}}H_r(w_{i})\,dx\right)^{1+\frac{1}{\beta'}}  \nonumber\\
&\leq C\frac{2^{i(n+3q)}}{H_r^\frac{1}{\beta'}(\overline{k})}\left(1+\frac{T\Big(u_+;x_0,\frac{r}{2}\Big)}{h_r(\overline{k})}\right)
\left(\mint_{B_{i}}H_r(w_{i})\,dx\right)^{1+\frac{1}{\beta'}}.
\end{align*}
Denote
$$
Y_i=\mint_{B_{i}}H_r(w_{i})\,dx.
$$
We now choose $\overline{k}>0$ so large that
\begin{equation}
\label{t5-4}
\frac{T\Big(u_+;x_0,\frac{r}{2}\Big)}{h_r(\overline{k}/\delta)}\le\frac{\delta^pT\Big(u_+;x_0,\frac{r}{2}\Big)}{h_r(\overline{k})}\le1.
\end{equation}
Then we will get
$$
Y_{i+1}\le\frac{C2^{i(n+3q)}}{\delta^pH_r^\frac{1}{\beta'}(\overline{k})}Y_i^{1+\frac{1}{\beta'}}.
$$
Through employing Lemma \ref{lem-2-3}, we know that if
\begin{equation}
\label{t5-5}
Y_0\le\left(\frac{C}{\delta^pH_r^\frac{1}{\beta'}(\overline{k})}\right)^{-\beta'}2^{-(n+3q)\beta'^2},
\end{equation}
then it holds that $Y_i\rightarrow0$ as $i\rightarrow \infty$. Meanwhile, we get $u\le 2\overline{k}$ in $B_\frac{r}{2}$. The remaining work is to select further $\overline{k}>0$ large enough such that \eqref{t5-5} is true. Notice
$$
Y_0=\mint_{B_{r}}H_r((u-\overline{k})_+)\,dx\le\mint_{B_{r}}H_r(u_+)\,dx.
$$
Hence we just need to require
$$
\mint_{B_r}H_r(u_+)\,dx\le \left(\frac{C}{\delta^pH_r^\frac{1}{\beta'}(\overline{k})}\right)^{-\beta'}2^{-(n+3q)\beta'^2},
$$
namely,
$$
\overline{k}\ge H^{-1}_r\left[\left(\frac{C}{\delta^p}\right)^{\beta'}2^{(n+3q)\beta'^2}\mint_{B_r}H_r(u_+)\,dx\right].
$$
Finally, combining the last inequality and \eqref{t5-4}, we could take
$$
\overline{k}=H^{-1}_r\left[\left(\frac{C}{\delta^p}\right)^{\beta'}2^{(n+3q)\beta'^2}\mint_{B_r}H_r(u_+)\,dx\right]+\delta
h_r^{-1}\Big[T\Big(u_+;x_0,\frac{r}{2}\Big)\Big].
$$
The proof is complete now.   \hfill $\Box$

\section*{Acknowledgements}
This work was supported by the National Natural Science Foundation of China (Nos. 12071098 and 12301245), the National Postdoctoral Program for Innovative Talents of China (No. BX20220381), the Young talents sponsorship program of Heilongjiang Province (No. 2023QNTJ004) and the Fundamental Research Funds for the Central Universities (No. 2022FRFK060022).

\section*{Declarations}
\subsection*{Conflict of interest} The authors declare that there is no conflict of interest. We also declare that this
manuscript has no associated data.

\subsection*{Data Availability} Data sharing is not applicable to this article as no datasets were generated or analysed
during the current study.


\begin{thebibliography}{99}
\bibitem{BM20} Beck L., Mingione G., Lipschitz bounds and nonuniform ellipticity, Comm. Pure Appl. Math. 73 (5) (2020) 944--1034.

\bibitem{BHK21} Benyaiche A., Harjulehto P., H\"{a}st\"{o} P., Karppinen A., The weak Harnack inequality for unbounded supersolutions of equations with generalized Orlicz growth, J. Differential Equations 275 (2021) 790--814.

\bibitem{BKO} Byun S. S., Kim H., Ok J., Local H\"{o}lder continuity for fractional nonlocal equations with general growth, Math. Ann. 387 (1-2) (2023) 807--846.

\bibitem{BKS23} Byun S. S., Kim H., Song K., Nonlocal Harnack inequality for fractional elliptic equations with Orlicz growth, Bull. Lond. Math. Soc. 55 (5) (2023) 2382--2399.

\bibitem{BLSa} Byun S. S., Lee H., Song K., Regularity results for mixed local and nonlocal double phase functionals, arXiv:2301.06234, 2023.

\bibitem{BOS22} Byun S. S., Ok J., Song K., H\"{o}lder regularity for weak solutions to nonlocal double phase problems, J. Math. Pures Appl. 168 (2022) 110--142.

\bibitem{BCM15} Baroni P., Colombo M., Mingione G., Harnack inequalities for double phase functionals, Nonlinear Anal. 121 (2015) 206--222.

\bibitem{CCV11} Caffarelli L., Chan C., Vasseur A., Regularity theory for parabolic nonlinear integral operators, J. Amer. Math. Soc. 24 (3) (2011) 849--869.

\bibitem{CS07} Caffarelli L., Silvestre L., An extension problem related to the fractional Laplacian, Comm. Partial Differential Equations 32 (7-9) (2007) 1245--1260.

\bibitem{CKW22} Chaker J., Kim M., Weidner M., Regularity for nonlocal problems with non-standard growth,  Calc. Var. Partial Differential Equations 61 (6) (2022) 31pp.

\bibitem{CKW23} Chaker J., Kim M., Weidner M., Harnack inequality for nonlocal problems with non-standard growth,  Math. Ann. 386 (1-2) (2023) 533--550.

\bibitem{CM15a} Colombo M., Mingione G., Bounded minimisers of double phase variational integrals, Arch. Ration. Mech. Anal. 218 (1) (2015) 219--274.

\bibitem{CM15b} Colombo M., Mingione G., Regularity for double phase variational problems, Arch. Ration. Mech. Anal. 215 (2) (2015) 443--496.

\bibitem{Coz} Cozzi M., Regularity results and Harnack inequalities for minimizers and solutions of nonlocal problems: a unified approach via fractional De Giorgi classes, J. Funct. Anal. 272 (11) (2017) 4762--4837.

\bibitem{DeFM24} De Filippis C., Mingione G., Gradient regularity in mixed local and nonlocal problems, Math. Ann. 388 (1) (2024) 261--328.

\bibitem{DeFM23} De Filippis C., Mingione G., Nonuniformly elliptic Schauder theory, Invent. Math. 234 (3) (2023) 1109--1196.

\bibitem{DP19} De Filippis C., Palatucci G., H\"{o}lder regularity for nonlocal double phase equations, J. Differential Equations 267 (1) (2019) 547--586.

\bibitem{DKP14} Di Castro A., Kuusi T., Palatucci G., Nonlocal Harnack inequalities, J. Funct. Anal. 267 (6) (2014) 1807--1836.

\bibitem{DKP16} Di Castro A., Kuusi T., Palatucci G., Local behavior of fractional $p$-minimizers, Ann. Inst. H. Poincar\'{e} Anal. Non Lin\'{e}aire 33 (2016) 1279--1299.

\bibitem{DFZ} Ding M., Fang Y., Zhang C., Local behaviour of the mixed local and nonlocal problems with nonstandard growth,  J. Lond. Math. Soc., to appear.

\bibitem{FZ} Fang Y., Zhang C., Harnack inequality for the nonlocal equations with general growth, Proc. Roy. Soc. Edinburgh Sect. A, 153 (5) (2023) 1479--1502.

\bibitem{FZ23} Fang Y., Zhang C., On weak and viscosity solutions of nonlocal double phase equations, Int. Math. Res. Not. IMRN 2023 (5) (2023) 3746--3789.

\bibitem{Gia} Giacomoni J., Kumar D., Sreenadh K., H\"{o}lder regularity results for parabolic nonlocal double phase problems, arXiv:2112.04287v1, 2021.

\bibitem{G03} Giusti E., Direct Methods in the Calculus of Variations, World Scientific Publishing Co., Inc., River Edge, NJ, 2003.

\bibitem{HO22} H\"{a}st\"{o} P., Ok J., Maximal regularity for local minimizers of non-autonomous functionals, J. Eur. Math. Soc. (JEMS) 24 (4) (2022) 1285--1334.

\bibitem{KW} Kassmannn M., Weidner M., The parabolic Harnack inequality for nonlocal equations, Duke Math. J., to appear.

\bibitem{KLL23} Kim M., Lee K., Lee S., The Wiener criterion for nonlocal Dirichlet problems, Comm. Math. Phys. 400 (3) (2023) 1961--2003.

\bibitem{Juha2001} Kinnunen J., Shanmugalingam N., Regularity of quasi-minimizers on metric spaces, Manuscripta Math. 105 (3) (2001) 401--423.

\bibitem{KKL19} Korvenp\"{a}\"{a} J., Kuusi T., Lindgren E., Equivalence of solutions to fractional $p$-Laplace type equations, J. Math. Pures Appl. (9) 132 (2019) 1--26.

\bibitem{KKP17} Korvenp\"{a}\"{a} J., Kuusi T., Palatucci G., Fractional superharmonic functions and the Perron method for nonlinear integro-differential equations, Math. Ann. 369 (3-4) (2017) 1443--1489.

\bibitem{KMS15} Kuusi T., Mingione G., Sire Y., Nonlocal equations with measure data, Comm. Math. Phys. 337 (3) (2015) 1317--1368.

\bibitem{Liao}Liao N., H\"{o}lder regularity for parabolic fractional $p$-Laplacian, Calc. Var. Partial Differential Equations 63 (1) (2024) 34pp.

\bibitem{Liao24} Liao N., Nonlocal weak Harnack estimates, arXiv: 2402.11986v1, 2024.

\bibitem{Mar89} Marcellini P., Regularity of minimizers of integrals of the calculus of variations with non standard growth conditions,  Arch. Ration. Mech. Anal. 105 (1989) 267--284.

\bibitem{Pra24} Prasad H., On the weak Harnack estimate for nonlocal equations, Calc. Var. Partial Differential Equations 63 (3) (2024) 19pp.

\bibitem{SM22} Scott J., Mengesha T., Self-improving inequalities for bounded weak solutions to nonlocal double phase equations, Commun. Pure Appl. Anal. 21 (1) (2022) 183--212.
\end{thebibliography}
\end{document}